\documentclass{amsart}
\usepackage{mathptmx}
\usepackage[T1]{fontenc}
\usepackage[latin9]{inputenc}
\usepackage[a4paper]{geometry}
\geometry{verbose,tmargin=2cm,bmargin=2cm,lmargin=1.5cm,rmargin=1.5cm}
\setcounter{secnumdepth}{5}
\setcounter{tocdepth}{5}
\usepackage{refstyle}
\usepackage{units}
\usepackage{amsthm}
\usepackage{amsmath}
\usepackage{amssymb}
\usepackage[unicode=true,pdfusetitle,
 bookmarks=true,bookmarksnumbered=false,bookmarksopen=false,
 breaklinks=false,pdfborder={0 0 0},backref=false,colorlinks=true]
 {hyperref}
\hypersetup{
 pdfstartview=FitB,pdfstartpage=1,bookmarksopen}

\makeatletter

\AtBeginDocument{\providecommand\secref[1]{\ref{sec:#1}}}
\AtBeginDocument{\providecommand\thmref[1]{\ref{thm:#1}}}
\AtBeginDocument{\providecommand\remref[1]{\ref{rem:#1}}}
\AtBeginDocument{\providecommand\hyporef[1]{\ref{hypo:#1}}}
\AtBeginDocument{\providecommand\propref[1]{\ref{prop:#1}}}
\AtBeginDocument{\providecommand\exaref[1]{\ref{exa:#1}}}
\AtBeginDocument{\providecommand\eqref[1]{\ref{eq:#1}}}
\AtBeginDocument{\providecommand\corref[1]{\ref{cor:#1}}}
\AtBeginDocument{\providecommand\lemref[1]{\ref{lem:#1}}}
\RS@ifundefined{subref}
  {\def\RSsubtxt{section~}\newref{sub}{name = \RSsubtxt}}
  {}
\RS@ifundefined{thmref}
  {\def\RSthmtxt{theorem~}\newref{thm}{name = \RSthmtxt}}
  {}
\RS@ifundefined{lemref}
  {\def\RSlemtxt{lemma~}\newref{lem}{name = \RSlemtxt}}
  {}
\def\RSsectxt{Section~}%
\usepackage{latexsym, amsfonts, amssymb, amsmath}
\theoremstyle{plain}
\numberwithin{equation}{section}
\numberwithin{figure}{section}
\numberwithin{table}{section}
\usepackage{refstyle}
\usepackage{refstyle}
    \newtheorem{stthm}{\protect\theoremname}[section]
    
    \newtheorem{spprop}{\protect\propositionname}[section]

 \def\SEC[#1,#2,#3]{\renewcommand\section{\@startsection {section}{1}{\z@}%
 {#2ex \@plus -1ex \@minus -.2ex}%
 {#3ex \@plus.2ex}%
 {\reset@font #1}}}
 \newref{sec}{%
 name = \RSsectxt,
 names = \RSsecstxt,
 lsttxt = \RSlsttxt,
 lsttwotxt = \RSlsttwotxt,
 refcmd = \ref{#1}}
 \def\SEC[#1,#2,#3]{\renewcommand\section{\@startsection {section}{1}{\z@}%
 {#2ex \@plus -1ex \@minus -.2ex}%
 {#3ex \@plus.2ex}%
 {\reset@font #1}}}
 \newref{sec}{%
 name = \RSsectxt,
 names = \RSsecstxt,
 lsttxt = \RSlsttxt,
 lsttwotxt = \RSlsttwotxt,
 refcmd = \ref{#1}}
 \newref{thm}{%
 name = \theoremname~,
 lsttxt = \RSlsttxt,
 lsttwotxt = \RSlsttwotxt,
 refcmd = {\ref{#1}}}
 \usepackage{enumitem}
 \theoremstyle{plain}
 \newtheorem*{cor*}{\protect\corollaryname}
 \newref{cor}{%
 name = \corollaryname~,
 lsttxt = \RSlsttxt,
 lsttwotxt = \RSlsttwotxt,
 refcmd = {\ref{#1}}}
 \theoremstyle{remark}
 \newtheorem*{rem*}{\protect\remarkname}
 \newref{rem}{%
 name = \remarkname~,
 lsttxt = \RSlsttxt,
 lsttwotxt = \RSlsttwotxt,
 refcmd = {\ref{#1}}}
 \def\SSEC[#1,#2,#3]{\renewcommand\subsection{\@startsection {subsection}{2}{\z@}%
 {#2ex \@plus -1ex \@minus -.2ex}%
 {#3ex \@plus.2ex}%
 {\reset@font #1}}}
 \newref{sec}{%
 name = \RSsectxt,
 names = \RSsecstxt,
 lsttxt = \RSlsttxt,
 lsttwotxt = \RSlsttwotxt,
 refcmd = \ref{#1}}
 \theoremstyle{remark}
 \newtheorem{srrem}{\protect\remarkname}[section]
 \newref{rem}{%
 name = \remarkname~,
 lsttxt = \RSlsttxt,
 lsttwotxt = \RSlsttwotxt,
 refcmd = {\ref{#1}}}
 \theoremstyle{definition}
  \newtheorem{sexmexample}{\protect\examplename}[section]
  \newref{exa}{%
  name = \examplename~,
  lsttxt = \RSlsttxt,
  lsttwotxt = \RSlsttwotxt,
  refcmd = {\ref{#1}}}
 \newref{prop}{%
 name = \propositionname~,
 lsttxt = \RSlsttxt,
 lsttwotxt = \RSlsttwotxt,
 refcmd = {\ref{#1}}}
 \theoremstyle{plain}
 \newcounter{ConSP}[spprop]
 \newtheorem{ccorsp}[ConSP]{\protect\corollaryname}
 \newref{cor}{%
 name = \corollaryname~,
 lsttxt = \RSlsttxt,
 lsttwotxt = \RSlsttwotxt,
 refcmd = {\ref{#1}}}
 \theoremstyle{definition}
  \newtheorem*{example*}{\protect\examplename}
  \newref{exa}{%
  name = \examplename~,
  lsttxt = \RSlsttxt,
  lsttwotxt = \RSlsttwotxt,
  refcmd = {\ref{#1}}}
 \theoremstyle{plain}
 \newtheorem{sllem}{\protect\lemmaname}[section]
 \newref{lem}{%
 name = \lemmaname~,
 lsttxt = \RSlsttxt,
 lsttwotxt = \RSlsttwotxt,
 refcmd = {\ref{#1}}}
 \theoremstyle{plain}

 \def\SSSEC[#1,#2,#3]{\renewcommand\subsubsection{\@startsection {subsubsection}{3}{\z@}%
 {#2ex \@plus -1ex \@minus -.2ex}%
 {#3ex \@plus.2ex}%
 {\reset@font #1}}}
 \newref{sec}{%
 name = \RSsectxt,
 names = \RSsecstxt,
 lsttxt = \RSlsttxt,
 lsttwotxt = \RSlsttwotxt,
 refcmd = \ref{#1}}

\def\section{\@startsection{section}{1}%
  \z@{.7\linespacing\@plus\linespacing}{.5\linespacing}%
  {\normalfont\scshape\bfseries\centering}}
\renewcommand\subsection{\@startsection {subsection}{2}{\z@}%
{.7\linespacing\@plus\linespacing \@minus -.2ex}%
{1ex \@plus.2ex}%
{\reset@font \normalfont\bfseries\upshape}}
\renewcommand\subsubsection{\@startsection {subsubsection}{3}{\z@}%
{.7\linespacing\@plus\linespacing \@minus -.2ex}%
{.5ex \@plus.2ex}%
{\reset@font \normalfont\bfseries\slshape}}
\newref{hypo}{%
refcmd = {Condition \ref{#1}}}
\def\@secnumfont{\normalfont\bfseries}

\newcommand\mynobreakpar{\par\nobreak\@afterheading}

\makeatother

 \providecommand{\corollaryname}{Corollary}
 \providecommand{\examplename}{Example}
 \providecommand{\lemmaname}{Lemma}
 \providecommand{\propositionname}{Proposition}
 \providecommand{\remarkname}{Remark}
 \providecommand{\theoremname}{Theorem}

\begin{document}
\def\rightmark{ESTIMATES FOR WEIGHTED BERGMAN PROJECTIONS}
\def\leftmark{P. CHARPENTIER, Y. DUPAIN \& M. MOUNKAILA}

\title{Estimates for weighted Bergman projections\\
on pseudo-convex domains of finite type in $\mathbb{C}^{n}$}

\author{P. Charpentier, Y. Dupain \& M. Mounkaila}
\begin{abstract}
In this paper we investigate the regularity properties of weighted
Bergman projections for smoothly bounded pseudo-convex domains of
finite type in $\mathbb{C}^{n}$. The main result is obtained for
weights equal to a non-negative rational power of the absolute value
of a special defining function $\rho$ of the domain: we prove (weighted) Sobolev-$L^{p}$
and Lipschitz estimates for domains in $\mathbb{C}^{2}$ (or, more
generally, for domains having a Levi form of rank $\geq n-2$ and
for ``decoupled'' domains) and for convex domains. In particular,
for these defining functions, we generalize results obtained by A.
Bonami \& S. Grellier and D. C. Chang \& B. Q. Li. We also obtain a general (weighted)
Sobolev-$L^{2}$ estimate.
\end{abstract}

\keywords{pseudo-convex, finite type, Levi form locally diagonalizable, convex,
extremal basis, geometric separation, weighted Bergman projection,
$\overline{\partial}_{\varphi}$-Neumann problem}

\subjclass[2000]{32F17, 32T25, 32T40}

\address{P. Charpentier \& Y. Dupain, Universit\'e Bordeaux I, Institut de
Math\'ematiques de Bordeaux, 351, Cours de la Lib\'eration, 33405,
Talence, France}

\address{M. Mounkaila, Universit\'e Abdou Moumouni, Facult\'e des Sciences,
B.P. 10662, Niamey, Niger}

\email{P. Charpentier: philippe.charpentier@math.u-bordeaux1.fr}

\email{Y. Dupain: yves.dupain@math.u-bordeaux1.fr}

\email{M. Mounkaila: modi.mounkaila@yahoo.fr}

\maketitle

\section*{Introduction}

Let $\Omega$ be a bounded open set in $\mathbb{C}^{n}$. Let $\omega$
be a non-negative measurable function on $\Omega$ and $\lambda$
be the Lebesgue measure on $\mathbb{C}^{n}$. The function $\omega$ is called
an \emph{admissible weight} (or simply a \emph{weight}) for $\Omega$
if the set $A^{2}\left(\Omega,\omega d\lambda\right)$ of square integrable
holomorphic functions with respect to the measure $\omega d\lambda$
is a closed subspace of the Hilbert space $L^{2}\left(\Omega,\omega d\lambda\right)$
(see \cite{Pas90}). So, if $\omega$ is a weight on $\Omega$,
the weighted Bergman projection $P_{\omega}^{\Omega}$, i.e. the orthogonal
projection of $L^{2}\left(\Omega,\omega d\lambda\right)$ onto $A^{2}\left(\Omega,\omega d\lambda\right)$, is well-defined. 

The aim of this paper is to investigate Lipschitz and Sobolev $L^{p}$
regularities of $P_{\omega}^{\Omega}$ when $\Omega$ is smooth, pseudo-convex
and of finite type.

As far as we know, for general finite type domains, only two results
were previously known. First, in \cite{BG95}, A. Bonami \& S. Grellier
proved Lipschitz and Sobolev $L^{p}$ estimates for the weighted Bergman
projection $P_{\omega}^{\Omega}$ of a finite type domain in $\mathbb{C}^{2}$ when the weight
$\omega$ is a non-negative \emph{entire} power of the absolute value of a defining function
$\rho$ of the domain (i.e. $\omega=\left(-\rho\right)^q$, $q\in\mathbb{N}$).
Secondly, in \cite{CDC97}, D. C. Chang \& B. Q. Li extended these results to ``decoupled''
domains in $\mathbb{C}^{n}$.

The main results of the present paper extend, for special defining
functions $\rho$, those estimates of $P_{\omega}^{\Omega}$ in two directions.
First we extend the class of weights $\omega$ to non-negative \emph{rational} powers of
$\left|\rho\right|$ (i.e. $\omega=\left(-\rho\right)^r$, $r\in\mathbb{Q}$).
Second we extend the class of domains to include convex domains (of finite type).

Moreover, we obtain weighted $L^{2}$-Sobolev regularity of
$P_{\left(-\rho\right)^r}^{\Omega}$ for general pseudo-convex domains of finite type in
$\mathbb{C}^{n}$, $\rho$ being also a special defining function of $\Omega$.

\bigskip{}

In complex analysis the (weighted or not) Bergman projection plays
a fundamental role and its regularity has been
extensively studied.

A fundamental class of weights is the one introduced by L. H\"ormander
in \cite{Hormander-L2-estimates} in order to solve the $\overline{\partial}$-equation.
Let $\varphi$ be a pluri-subharmonic function defined in $\Omega$.
H\"ormander's theorem solves the so-called $\overline{\partial}_{\varphi}$-Neumann
problem associated to the weight $e^{-\varphi}$ proving the existence
of the Neumann operator $\mathcal{N}_{\varphi}$ inverting the complex
laplacian $\square_{\varphi}$. Recall that the Bergman projection $P_{e^{-\varphi}}^{\Omega}$
is closely related to $\mathcal{N}_{\varphi}$ by the formula $P_{e^{-\varphi}}^{\Omega}=\mathrm{Id}-\overline{\partial}_{\varphi}^{*}\mathcal{N}_{\varphi}\overline{\partial}_{\varphi}$.

For $\varphi=0$ many results have been obtained in this direction in various
function spaces. In particular, for ($L^{2}$) Sobolev regularity
there is a very large bibliography essentially based on J. J. Kohn's
work (see \cite{Str10} for a good general presentation). For other
spaces, a lot of sharp results were obtained by several authors,
but there are still basic open problems (see for example \cite{N-R-S-W-Bergman-dim-2,Chang-Nagel-Stein,BC00,McNeal-convexes-94,McNeal-Stein-Bergman,Cho-Bergman-96,McNeal-Stein-Szego,Cho-03-Bergman-comparable-Math-Anal-Appl,Charpentier-Dupain-Geometery-Finite-Type-Loc-Diag,Charpentier-Dupain-Szego-Barcelone,CD08}
and references therein).

For non-zero functions $\varphi$, the only general result, due to
J. J. Kohn (\cite{Kohn-defining-function}), gives $L^{2}$-Sobolev
estimates for the $\overline{\partial}_{\varphi}$-Neumann problem
for general smoothly bounded pseudo-convex domains with $\varphi=t\left|z\right|^{2}$,
where $t$ is big enough depending on the Sobolev scale. Recall that there exist
smoothly bounded pseudo-convex domains for which the (unweighted) Bergman
projection is not $L^{2}$-Sobolev regular (\cite{Christ-96}). However,
if $\Omega$ is of finite type, it is not difficult to see that, if
$\varphi$ is $\mathcal{C}^{\infty}$ on $\overline{\Omega}$ then
the weighted Bergman projection $P_{e^{-\varphi}}^{\Omega}$ has the
same $L^{2}$-Sobolev regularity than the unweighted one.

\medskip{}

For the Bergman projection $P_{\omega}^{\Omega}$ with a general (admissible)
weight $\omega$, very few results were obtained for finite type domains.
In addition to the results of A. Bonami \& S. Grellier and D. C. Chang \& B. Q. Li
cited before, sharp results were obtained for strictly pseudo-convex domains. In
the case of the unit ball of $\mathbb{C}^{n}$, for weights equal
to a power greater than $-1$ of $1-\left|z\right|^{2}$, the kernels
of these operators can be written explicitly (see \cite{Cha80,HP84,HP84b})
and then it is possible to obtain very precise estimates. Generalizations
of these results to strictly pseudo-convex domains have also been
done by several authors (see \cite{LR86,LRM87,LR88,Cum90}).

Even in dimension $1$, $L^{p}$ estimates for weighted
Bergman projections can be true only for $p=2$, and, in general, are
not easy to obtain, as shown in \cite{Zey11b,Zey11,Zeyb,Zey}.

\bigskip{}

The method used in this paper is completely different than that
used in A. Bonami \& S. Grellier or D. C. Chang \& B. Q. Li papers.
It is inspired by a well-known method introduced by F. Forelli and
W. Rudin (see \cite{FR75,rudin-unit-ball,Lig89}):
we look at $\Omega$ as a slice of a pseudo-convex domain
$\widetilde{\Omega}$ of finite type in $\mathbb{C}^{n+m}$ and try
to deduce estimates for weighted Bergman projections of $\Omega$
from estimates of the unweighted Bergman projection of $\widetilde{\Omega}$.

\medskip{}

The paper is organized as follows. In the first section we present
the main results on weighted Bergman projections. In \secref{A-Hartogs-domain-Omega-Tilde}
we define the domain $\widetilde{\Omega}$ and discuss its fundamental properties.
In \secref{Relations-between-operators-omega-tilde-omega}
we give the basic relations between the Bergman projection of $\widetilde{\Omega}$
and a weighted Bergman projection of $\Omega$ and prove the general
$L^{2}$-Sobolev results. In \secref{Sharp-estimates-of-weighted-Bergman-kernel},
we prove the $L_{s}^{p}$ and Lipschitz estimates given in \thmref{Sobolev-Lp_Lip_Bergman}
establishing sharp estimates of the kernels of weighted Bergman projections.

\section{Main results and methods\label{sec:Main-results-and-methods}}

For simplicity, we only state here the main result concerning the
Bergman projection for weights equal to a non-negative rational power of $\left|\rho\right|$
where $\rho$ is a particular defining function of $\Omega$. Detailed
results for other operators, other weights and for the Bergman kernel
will be given in the next sections.

If $k$ is a positive function on an open set $\Omega$ in $\mathbb{C}^{n}$
whose inverse is locally integrable, we denote by $P_{k}=P_{k}^{\Omega}$
the orthogonal projection of the Hilbert space $L^{2}\left(kd\lambda\right)$
onto the (closed \cite{Pas90}) subspace of holomorphic functions
(i.e. the Bergman projection associated to the weight $k$).
\begin{stthm}
\label{thm:Sobolev-Lp_Lip_Bergman}Let $\Omega$ be a smoothly bounded
pseudo-convex domain of finite type $M$ and $\rho$ a defining function
of $\Omega$. We assume that one of the two following conditions is
satisfied:
\begin{itemize}
\item $\Omega$ is a domain in $\mathbb{C}^{2}$ and $\rho$ is such that
there exists $s\in\left]0,1\right]$ such that $-\left(-\rho\right)^{s}$
is strictly pluri-subharmonic in $\Omega$ (\cite{DF77-Strict-Psh-Exhau-Func-Inventiones});
\item $\Omega$ is convex and, if $g$ is a gauge of $\Omega$, then
$\rho=g^{4}e^{1-\nicefrac{1}{g}}-1$.
\end{itemize}
Let $\omega=\left(-\rho\right)^{r}$ with $r$ a non-negative rational
number. Let us denote by $P_{\omega}^{\Omega}$ the weighted Bergman
projection of $\Omega$ associated to the Hilbert space $L^{2}\left(\omega d\lambda\right)$,
$d\lambda$ denoting the Lebesgue measure. Let us also denote by $\delta_{\partial\Omega}$
the distance to the boundary of $\Omega$.
\begin{enumerate}
\item Let $s\in\mathbb{N}$. Then, for $p\in\left]1,+\infty\right[$ and
$-1<\beta<p\left(r+1\right)-1$, $P_{\omega}^{\Omega}$ maps
the Sobolev space $L_{s}^{p}\left(\delta^{\beta}_{\partial\Omega}\right)$ continuously into itself. 
\item For $\alpha<\nicefrac{1}{M}$, $P_{\omega}^{\Omega}$ maps
the Lipschitz space $\Lambda_{\alpha}$ continuously into itself. 
\end{enumerate}
\end{stthm}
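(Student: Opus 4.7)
The approach is the Forelli--Rudin--Ligocka paradigm announced in the introduction. We introduce a Hartogs-type domain $\widetilde{\Omega}\subset\mathbb{C}^{n+m}$ over $\Omega$, roughly of the form
\[
\widetilde{\Omega}=\bigl\{(z,w)\in\Omega\times\mathbb{C}^{m}:\,|w|^{2\ell}<(-\rho(z))^{k'}\bigr\},
\]
the integers $m,k',\ell$ being tuned so that the volume of the fiber over $z$ is a constant multiple of $(-\rho(z))^{r}$ (this is possible precisely because $r$ is rational). A Fubini computation in the $w$-variable immediately gives, for any $f\in A^{2}(\Omega,\omega d\lambda)$ regarded as a function on $\widetilde{\Omega}$ constant in $w$, the identity $\|f\|_{L^{2}(\widetilde{\Omega})}^{2}=c\,\|f\|_{L^{2}(\omega d\lambda)}^{2}$, and, by inspection of the reproducing formula, identifies the weighted kernel $K_{\omega}^{\Omega}(z,\zeta)$ with a constant multiple of $K_{\widetilde{\Omega}}\bigl((z,0),(\zeta,0)\bigr)$. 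This slicing identity is the hinge of the whole argument.

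The second step is geometric: one must show that $\widetilde{\Omega}$ is smoothly bounded, pseudo-convex, of finite type, and lies in a class for which sharp \emph{unweighted} Bergman kernel estimates are available. This is exactly where the two hypotheses on $\rho$ intervene. Under the first hypothesis, the Diederich--Fornaess exponent $s$ with $-(-\rho)^{s}$ strictly plurisubharmonic produces a plurisubharmonic local defining function for $\widetilde{\Omega}$ whose Levi form has rank $\geq(n+m)-2$, placing $\widetilde{\Omega}$ in the ``Levi form locally diagonalizable'' class for which sharp kernel estimates were obtained in \cite{Charpentier-Dupain-Geometery-Finite-Type-Loc-Diag,CD08}. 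Under the second hypothesis, the particular form $\rho=g^{4}e^{1-1/g}-1$ is engineered precisely so that the sublevel set defining $\widetilde{\Omega}$ remains convex, whence the sharp McNeal-type kernel estimates of \cite{McNeal-convexes-94,McNeal-Stein-Bergman} apply. Establishing these properties of $\widetilde{\Omega}$ (smoothness up to the boundary, pseudo-convexity or convexity, control of the type in terms of $M$) is the content of \secref{A-Hartogs-domain-Omega-Tilde}, while the slicing identity is developed in \secref{Relations-between-operators-omega-tilde-omega}.

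Granted those two inputs, the assertions (1) and (2) reduce to pulling back the known unweighted $L^{p}_{s}$ and Lipschitz continuity of $P_{\widetilde{\Omega}}$ through the slicing identity. For (1), the weight $\delta_{\partial\Omega}^{\beta}\omega\simeq(-\rho)^{r+\beta}$ on $\Omega$, combined with the fiber volume $(-\rho)^{r}$, realises the norm $\|\cdot\|_{L^{p}(\Omega,\delta_{\partial\Omega}^{\beta}\omega d\lambda)}$ as a norm $\|\cdot\|_{L^{p}(\widetilde{\Omega},\delta_{\partial\widetilde{\Omega}}^{\widetilde{\beta}}d\lambda)}$ for an exponent $\widetilde{\beta}$ depending linearly on $\beta,r,p$, and the classical B\'ekoll\'e--Bonami-type constraint $-1<\widetilde{\beta}<p-1$ ensuring $L^{p}$ boundedness of the unweighted Bergman projection on $\widetilde{\Omega}$ translates into exactly $-1<\beta<p(r+1)-1$. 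The same reduction, performed now with the sharp derivative estimates on $K_{\widetilde{\Omega}}$, gives (2), the Lipschitz exponent $1/M$ descending from the corresponding $1/\widetilde{M}$ regularity on $\widetilde{\Omega}$ once one verifies that the type $\widetilde{M}$ of $\widetilde{\Omega}$ is controlled by $M$ and the exponents of the Hartogs construction. The kernel estimates underlying this final step are the object of \secref{Sharp-estimates-of-weighted-Bergman-kernel}.

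The principal obstacle I foresee is the geometric construction: for an \emph{arbitrary} non-negative rational $r$, producing a $\widetilde{\Omega}$ that is simultaneously smooth up to the boundary, pseudo-convex (respectively convex), and of finite type of a type computable from $M$. This is exactly what forces the convex hypothesis to be stated with its unusual explicit defining function, and what makes the Diederich--Fornaess exponent indispensable in the dimension-two case. A secondary but genuine technical difficulty is tracking boundary-distance functions through the singular fibration $\widetilde{\Omega}\to\Omega$ sharply enough that the shift $pr$ in the admissible range of $\beta$, and the exponent $1/M$ in the Lipschitz estimate, are obtained without loss of powers of $-\rho$; this requires the anisotropic kernel estimates cited above rather than their cruder size-only versions.
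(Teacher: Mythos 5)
Your overall architecture is the paper's: lift $\Omega$ to a Hartogs domain $\widetilde{\Omega}=\{\rho(z)+h(w)<0\}$ with $h(w)=\sum_{i}\left|w_{i}\right|^{2q_{i}}$ tuned so that the fiber volume is $\simeq(-\rho)^{r}$ (rationality of $r$ enters exactly here), use the slicing identity $K_{\omega}^{\Omega}(z,\zeta)=K^{\widetilde{\Omega}}\left((z,0),(\zeta,0)\right)$ of \lemref{Relation-Bergman-omega-omega-tilde}, and exploit the geometry of $\widetilde{\Omega}$. But the final step of your argument has a genuine gap: you propose to \emph{transfer operator bounds} from $\widetilde{\Omega}$ to $\Omega$, and this provably yields strictly weaker ranges than the theorem asserts. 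Concretely, for $u$ on $\Omega$ extended constant in $w$, a scaling of the fiber integral gives $\left\Vert \widetilde{u}\right\Vert _{L^{p}(\widetilde{\Omega},\delta_{\partial\widetilde{\Omega}}^{\widetilde{\beta}})}^{p}\simeq\left\Vert u\right\Vert _{L^{p}(\Omega,(-\rho)^{\widetilde{\beta}+r})}^{p}$ (finite only for $\widetilde{\beta}>-1$), so the correspondence is $\beta=\widetilde{\beta}+r$, and the constraint $-1<\widetilde{\beta}<p-1$ for the unweighted projection of $\widetilde{\Omega}$ translates into $r-1<\beta<p-1+r$, \emph{not} into $-1<\beta<p(r+1)-1=p-1+pr$. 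For $r\geq1$ this range does not even contain $\beta=0$, so your mechanism cannot recover the unweighted $L^{p}$ boundedness of $P_{\omega}^{\Omega}$. The same defect appears in (2): the type of $\widetilde{\Omega}$ at points of $\partial\Omega\times\{0\}$ is $\max(M,2q_{i})$ with $q_{i}>M$ forced by the construction, so a Lipschitz exponent ``descending from $\nicefrac{1}{\widetilde{M}}$'' would be $<\nicefrac{1}{2M}$, worse than the claimed $\nicefrac{1}{M}$. What the paper actually does is transfer the \emph{pointwise kernel estimates} (with derivatives) from $\widetilde{\Omega}$ down to $K_{\omega}^{\Omega}$, re-express them entirely in terms of the geometry of $\Omega$ and the measure $\omega\,d\lambda$ (\thmref{C-2-Estimates-Weighted-Bergman-Kernel} and \thmref{Convex-Estimates-Weighted-Bergman-Kernel}), and then run a Schur-test/H\"older argument (a generalization of Proposition 2.1 of \cite{BCG96}) directly on $\Omega$; it is this last step, performed on $\Omega$ where the relevant type is $M$ and the weight is $(-\rho)^{r}$, that produces the full ranges $-1<\beta<p(r+1)-1$ and $\alpha<\nicefrac{1}{M}$.

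A secondary error: under the first hypothesis the Levi form of $r=\rho+h$ at a weakly pseudoconvex point $(z^{0},0)$ does \emph{not} have rank $\geq(n+m)-2$; since $q_{i}\geq2$, the Hessians $\partial\overline{\partial}h_{i}$ vanish at $w=0$, so the rank there can be $0$. Membership of $\widetilde{\Omega}$ in the ``Levi form locally diagonalizable'' class is not a rank statement and must be proved directly; the paper does this in \thmref{C-2-rho-plus-h-Loc-Diag} by an inductive construction of complex-tangent vector fields $W_{k}$ diagonalizing the Levi form, using the structural hypothesis $\frac{\partial h_{i}}{\partial w_{i}}=\alpha_{i}\frac{\partial^{2}h_{i}}{\partial w_{i}\partial\overline{w}_{i}}$ on $h$. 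Finally, note that the transfer of kernel estimates itself is not free: one needs the comparison of the anisotropic pseudo-distances and of the weights $F$, $\widetilde{F}$ of $\Omega$ and $\widetilde{\Omega}$ (and, in the convex case, the compatibility of extremal bases), plus a Kerzman-type pseudolocality argument to pass from local to global estimates; none of this appears in your sketch, and it is where most of the technical work of \secref{Sharp-estimates-of-weighted-Bergman-kernel} lies.
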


For $L^{2}$-Sobolev estimates, we have more general results. For
example, for weighted Bergman projections:
\begin{stthm}
\label{thm:L2-Sobolev-Est-General}Let $\Omega$ be any smoothly bounded
pseudo-convex domain of finite type in $\mathbb{C}^{n}$. Let $\rho$
be a smooth defining function of $\Omega$ such that there exists
$s\in\left]0,1\right]$ such that $-\left(-\rho\right)^{s}$ is strictly
pluri-subharmonic in $\Omega$. Let $r\in\mathbb{Q}_{+}$.
Let $\omega=\left(-\rho\right)^{r}$. Then the weighted Bergman projection
$P_{\omega}^{\Omega}$ associated to the Hilbert space $L^{2}\left(\omega d\lambda\right)$
maps the Sobolev space $L_{s}^{2}\left(\omega d\lambda\right)$
continuously into itself for all $s\in\mathbb{N}$.
\end{stthm}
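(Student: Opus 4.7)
The strategy, consistent with the Forelli--Rudin philosophy sketched in the introduction, is to realize $\Omega$ as a slice of a higher-dimensional pseudo-convex domain of finite type and pull back unweighted estimates. Writing the non-negative rational exponent as $r=k/m$ with $k,m\in\mathbb{N}$, I would introduce a Hartogs companion such as
\[
\widetilde{\Omega}\;:=\;\bigl\{(z,w)\in\Omega\times\mathbb{C}^{k}:\,|w|^{2m}<-\rho(z)\bigr\}\subset\mathbb{C}^{n+k},
\]
and check that $\widetilde{\Omega}$ is a smoothly bounded pseudo-convex domain of finite type. Smoothness holds because $|w|^{2m}$ is a polynomial in $(w,\bar w)$; pseudo-convexity reduces to plurisubharmonicity of $-\log(-\rho)$, which follows (pointwise, from a direct $i\partial\bar\partial$ computation) from the hypothesis that $-(-\rho)^{s}$ is strictly plurisubharmonic for some $s\in(0,1]$; finite type of $\widetilde{\Omega}$ follows from that of $\Omega$ together with the finite vanishing order of $|w|^{2m}$ at $w=0$. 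A Fubini computation then yields the slice identity
\[
\int_{\widetilde{\Omega}}|f(z)|^{2}\,d\lambda(z,w)\;=\;c_{k,m}\,\|f\|_{L^{2}(\omega\,d\lambda)}^{2},
\]
since the fiber over $z\in\Omega$ has Euclidean volume proportional to $(-\rho(z))^{k/m}=(-\rho(z))^{r}$.

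Second, on $\widetilde{\Omega}$ I would invoke the classical Kohn--Catlin result: on any smoothly bounded pseudo-convex domain of finite type, the unweighted Bergman projection preserves every Sobolev space $L^{2}_{s}$ for $s\in\mathbb{N}$. The remaining step is to transfer this regularity back to $\Omega$ via the natural $\mathbb{T}^{k}$-action rotating the $w$-variable. The projection $P_{\widetilde{\Omega}}$ commutes with this action, so it decomposes into Fourier modes; restricted to the zero mode (functions depending only on $z$), it coincides with $c_{k,m}^{-1}P_{\omega}^{\Omega}$ by the slice identity. For the lifting $f\mapsto F(z,w):=f(z)$, all $w$-derivatives of $F$ vanish, so $\|F\|_{L^{2}_{s}(\widetilde{\Omega})}^{2}$ reduces by Fubini to an integral of $|z\text{-derivatives of }f|^{2}$ against the fiber measure, giving the norm equivalence $\|F\|_{L^{2}_{s}(\widetilde{\Omega})}\asymp\|f\|_{L^{2}_{s}(\omega\,d\lambda)}$. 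Combined with Kohn--Catlin on $\widetilde{\Omega}$, the conclusion follows.

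I expect the main technical obstacle to be this last transfer: verifying the norm equivalence uniformly in the order $s$ and making rigorous the identification between the zero Fourier mode of $P_{\widetilde{\Omega}}$ and the weighted projection $P_{\omega}^{\Omega}$, including the fact that holomorphic $w$-independent functions on $\widetilde{\Omega}$ correspond bijectively to holomorphic functions on $\Omega$ in $L^{2}(\omega\,d\lambda)$. It is here that the specific choice of defining function (and the strict plurisubharmonicity of $-(-\rho)^{s}$) becomes genuinely indispensable, since it is exactly the Diederich--Fornaess-type condition that adapts the Hartogs defining function $\rho(z)+|w|^{2m}$ to the pseudo-convex structure of $\widetilde{\Omega}$ and guarantees the expected kernel correspondence. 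Once the slice identification and the norm equivalence are in place, the theorem is essentially a corollary of Kohn--Catlin applied to $\widetilde{\Omega}$.
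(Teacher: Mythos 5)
Your proposal is correct and follows essentially the same route as the paper: build the Forelli--Rudin/Hartogs domain $\widetilde{\Omega}=\{(z,w):\rho(z)+h(w)<0\}$ with a radial $h$ whose fiber volume reproduces $(-\rho)^{r}$, use the Diederich--Forn\ae ss property of $\rho$ to obtain pseudo-convexity and finite type of $\widetilde{\Omega}$, apply Catlin's subelliptic estimates there, and transfer back through the slice identity $P_{\omega}^{\Omega}u(z)=P^{\widetilde{\Omega}}\widetilde{u}(z,0)$ (which the paper justifies by the mean-value property over the fiber balls, equivalently your torus-invariance argument). The only cosmetic difference is that the paper takes $h(w)=\sum_{i}\left|w_{i}\right|^{2q_{i}}$ with $w_{i}\in\mathbb{C}$ and $r=\sum_{i}1/q_{i}$ rather than a single block $\left|w\right|^{2m}$ on $\mathbb{C}^{k}$.
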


The method we use is a modification of a well-known construction
of Forelli-Rudin. Such method has been used by several authors for
the same kind of studies (for example E. Ligocka and Y. Zeytuncu
\cite{Lig89,Zey}). To try to obtain weights which are not only integer powers
of the absolute value of the defining function of $\Omega$, we consider
a more general situation investigating the properties of a domain
$\widetilde{\Omega}$ defined in $\mathbb{C}^{n+m}$ by an equation
of the form $\rho(z)+h(w)<0$ where $\rho$ is a defining function
of $\Omega$ and $h$ a positive function. These properties are discussed
in \secref{A-Hartogs-domain-Omega-Tilde}.
\begin{rem*}
\quad\mynobreakpar
\begin{enumerate}
\item The restriction $r\in\mathbb{Q}_{*}^{+}$ in our results is due to the
method and we do not know if the theorems can be extended to the natural
scale of powers which is $\left]-1,+\infty\right[$.
\item Recall that the results of A. Bonami \& S. Grellier and D. C.
Chang \& B. Q. Li are valid for any defining function of $\Omega$. It seems that it is not easy
to extend our results to any defining function. We will return to this question in a future paper.

The only thing we can say in general (i.e. without the finite type hypothesis)
is that the set of weights for which the corresponding weighted Bergman projections satisfy a given
estimate is, in a certain sense, open. This can be seen using a special case
of \cite[Theorem 4.2]{Pas90} (which can be easily directly proved): let $\omega$ be an admissible
weight, $h$ be a real smooth function such that $0<c\leq\left|h\right|\leq C<1$,
and denotes $M_{h}$ the operator $g\mapsto hg$. Then
\[
P_{\left(1+h\right)\omega}^{\Omega}=P_{\omega}^{\Omega}+\sum_{k=1}^{\infty}\left(-1\right)^{k-1}\left[P_{\omega}^{\Omega}\circ M_{h}\right]^{k}\circ\left(\mathrm{Id}-P_{\omega}^{\Omega}\right),
\]
 where $\left[...\right]^{k}$ is the power for the composition operator,
the series converging in the operator norm of
$\mathcal{L}\left(L^{2}\left(\omega d\lambda\right)\right)$.\\
For example, if $P_{\omega}^{\Omega}$ maps
$L^{p}\left(\delta_{\partial\Omega}^{\beta}\right)$
continuously into itself with norm $K^{\omega}_{p,\beta}$, then, a simple calculation
shows immediately that the same is true for $P_{\left(1+h\right)\omega}^{\Omega}$
if $\left\Vert h\right\Vert _{\infty}\leq\frac{1}{\left(K^{\omega}_{p,\beta}+1\right)^{2}}$.
\end{enumerate}
\end{rem*}

\section{\label{sec:A-Hartogs-domain-Omega-Tilde}A Hartogs domain \texorpdfstring{$\widetilde{\Omega}$}{Omega-tilde}
in $\mathbb{C}^{n+m}$ based on a domain \texorpdfstring{$\Omega$}{Omega}
in $\mathbb{C}^{n}$}

For a given smoothly bounded pseudo-convex domain $\Omega$ in $\mathbb{C}^{n}$
with a defining function $\rho$, we consider a smooth non-negative
function $h$ defined in $\mathbb{C}^{m}$ such that 
\[
h(w)=0\Leftrightarrow w=0\mbox{ and }\lim_{\left|w\right|\rightarrow+\infty}h(w)=+\infty
\]
and we denote by $\widetilde{\Omega}$ the smooth bounded domain
\begin{equation}
\widetilde{\Omega}=\left\{ \left(z,w\right)\in\mathbb{C}^{n}\times\mathbb{C}^{m}\mbox{, s. t. }r(z,w)=\rho(z)+h(w)<0\right\} .\label{eq:Definition-Omega-tilde}
\end{equation}

Then, for particular functions $h$, there are very simple relations
between the standard Bergman kernel of $\widetilde{\Omega}$ and a
weighted Bergman kernel of $\Omega$, and between the unweighted $\overline{\partial}$-Neumann
problem on $\widetilde{\Omega}$ and a weighted $\overline{\partial}_{\varphi}$-Neumann
problem on $\Omega$ (see \secref{Relations-between-operators-omega-tilde-omega}).
The goal of this part is to find what conditions on $\rho$
and $h$ would provide enough properties on $\widetilde{\Omega}$
so that we can obtain sharp estimates on the Bergman projection (or on
the $\overline{\partial}$-Neumann problem of $\widetilde{\Omega}$)
or sufficiently precise information on the Bergman kernel of $\widetilde{\Omega}$
on $\left\{ w=0\right\} $.

More precisely, in this section we discuss the following questions:
suppose $\nabla h(w)\neq0$ if $w\neq0$; under what conditions on
$\rho$ and $h$ the domain $\widetilde{\Omega}$ is:
\begin{itemize}
\item pseudo-convex;
\item pseudo-convex of finite type if $\Omega$ is of finite type.
\item completely geometrically separated, in the sense of \cite{CD08} (at
every boundary point or at boundary points near$\left\{ w=0\right\} $)
if $\Omega$ is so.
\end{itemize}

To get these properties, quite strong conditions have to be imposed
to $\rho$ and $h$. To simplify the reading of the paper, we first
state these different conditions.

\subsection{\label{sec:Diederich-Fornaess-def-func}A special defining function
for $\Omega$}

As we will see in \secref{Pseudo-convexity-of-Omega-tilde}, even
with the function $h(w)=\left|w\right|^{2}$, $w\in\mathbb{C}$, and
for very simple domains like the unit ball, $\widetilde{\Omega}$
is not automatically pseudo-convex: this depends on the choice of
the defining function $\rho$ (\remref{Pseudoconvexity-rho-plus-modulus-w}).

Fortunately, a good choice can always be done using a celebrated theorem
of K. Diederich \& J. E. Forn\ae ss (\cite[Theorem 1]{DF77-Strict-Psh-Exhau-Func-Inventiones})
which proves that for any smooth bounded pseudo-convex domain $\Omega$
there exists $s_{\Omega}\in\left]0,1\right]$ such that, for $s\in\left]0,s_{\Omega}\right[$
there exists a smooth defining function $\rho$ of $\Omega$ such
that the function $-(-\rho)^{s}$ is strictly pluri-subharmonic in
$\Omega$. Then, to fix the notations:

\emph{Throughout this paper, for $s\in\left]0,s_{\Omega}\right[$, we
will denote by $\rho_{s}$ a defining function such that
\begin{equation}
-\left(-\rho_{s}\right)^{s}\mbox{ is strictly pluri-subharmonic in }\Omega.\label{eq:hypothesis-defining-func}
\end{equation}
} Of course such a function $\rho_{s}$ is not unique.
\begin{srrem}
\label{rem:-log-of--D-F-def-function-psh}
\quad\mynobreakpar 
\begin{enumerate}
\item The pluri-subharmonicity of $-(-\rho_{s})^{s}$ means
\[
i\partial\overline{\partial}\rho_{s}\geq i\frac{1-s}{\rho_{s}}\partial\rho_{s}\wedge\overline{\partial}\rho_{s}.
\]
Thus $i\partial\overline{\partial}\rho_{s}\geq\frac{i}{\rho_{s}}\partial\rho_{s}\wedge\overline{\partial}\rho_{s}$,
and, as this means that $-\log\left(-\rho_{s}\right)$ is pluri-subharmonic
in $\Omega$, and the $\overline{\partial}_{-r\log\left(-\rho_{s}\right)}$-Neumann
problem is well-defined for $r\geq0$.
\item Let $U$ be an open set in $\mathbb{C}^{n}$. Let $\rho$ be a $\mathcal{C}^{\infty}$
function on $U$ whose gradient does not vanish. Assume there exists
$z^{0}\in U$ such that $\rho\left(z^{0}\right)=0$. Thus, for $\varepsilon\in\mathbb{R}$
sufficiently small the set 
\[
\Xi=\left\{ z\in U\mbox{ such that }\rho(z)=\varepsilon\right\} 
\]
is a non-empty smooth hypersurface. Assume moreover that $i\partial\overline{\partial}\rho\geq i\frac{\mu}{\rho}\partial\rho\wedge\overline{\partial}\rho$,
$\mu\in\mathbb{R}$, on $U$. Then, at every point of $\Xi$, the
restriction of $i\partial\overline{\partial}\rho$ to the complex
tangent space of $\Xi$ is non-negative.
\end{enumerate}
\end{srrem}

\subsection{\label{sec:Hypothesis_on_function_h}Hypothesis on the function $h$}

Depending on the properties we want to have for $\widetilde{\Omega}$,
several conditions will be imposed on $h$. We define now five conditions
that we will use in the following sections.

Let $\Omega$ be a bounded smooth pseudo-convex domain in $\mathbb{C}^{n}$
and let $h$ be a smooth real function on $\mathbb{C}^{m}$.
\begin{enumerate}[{}]\renewcommand{\labelenumi}{\hspace{-\parindent}}
\def\theenumi{\Roman{enumi}}
\item \textbf{\label{hypo:Hypothesis-I}}\hspace{-2\parindent}\textbf{Condition
I}: \par\noindent{\itshape$\rho=\rho_{s}$ is a defining function
of $\Omega$ satisfying (\ref{eq:hypothesis-defining-func}) (\secref{Diederich-Fornaess-def-func}).
$h$ is non-negative, $h(w)=0\Leftrightarrow w=0$ (and thus $\nabla h(0)=0$),
$\nabla h(w)\neq0$ if $w\neq0$, $\lim_{\left|w\right|\rightarrow+\infty}h(w)=+\infty$
and there exists $s'\in\left[0,s\right[$ such that $h^{s'}$ is pluri-subharmonic
(i.e. $i\partial\overline{\partial}h\geq i\frac{1-s'}{h}\partial h\wedge\overline{\partial}h$
which implies that $h$ is strictly pluri-subharmonic at every point
$w$ such that $\frac{\partial h}{\partial w_{i}}(w)\neq0$ for all
$i$).}\vspace{.5em}
\item \textbf{\label{hypo:Hypothesis-II}}\hspace{-2\parindent}\textbf{Condition
II}: \par\noindent{\itshape$h(w)=\sum_{i=1}^{p}h_{i}\left(w_{i}\right)$,
$w=\left(w_{1},\ldots,w_{p}\right)$, $w_{i}\in\mathbb{C}^{m_{i}}$,
the functions $h_{i}$ being non-negative smooth pluri-subharmonic
on $\mathbb{C}^{m_{i}}$, and satisfying:}

\begin{enumerate}
\item {\itshape for every $i$, $h_{i}\left(w_{i}\right)=0\Leftrightarrow w_{i}=0$,
$\nabla h_{i}\left(w_{i}\right)\neq0$ if $w_{i}\neq0$, $\lim_{\left|w\right|\rightarrow+\infty}h_{i}(w)=+\infty$;}
\item {\itshape $\log\left(h_{i}\right)$ is pluri-subharmonic (i.e. $i\partial\overline{\partial}h_{i}\geq i\frac{1}{h_{i}}\partial h_{i}\wedge\overline{\partial}h_{i}$)
and $h_{i}$ is strictly pluri-subharmonic outside the origin;}
\item {\itshape $h_{i}$ is of finite type $2q_{i}=\mathrm{typ}_{0}\left(h_{i}\right)$
at the origin (in the sense introduced at the beginning of \secref{Type-finiteness}).}\vspace{.5em}
\end{enumerate}
\item \textbf{\label{hypo:Hypothesis-III}}\hspace{-2\parindent}\textbf{Condition
III}: \par\noindent{\itshape$h$ satisfies \hyporef{Hypothesis-II}
with $m_{i}=1$ for every $i$ (thus the first part of the second
condition of \hyporef{Hypothesis-II} means $\triangle h_{i}\geq\frac{1}{h_{i}}\left|h_{i}'\right|^{2}$),
and, for each $i$ there exists a function $\alpha_{i}$, $\mathcal{C}^{\infty}$
in a neighborhood of the origin in $\mathbb{C}$, $\lim_{w_{i}\rightarrow0}\alpha_{i}\left(w_{i}\right)=0$,
such that $\frac{\partial h_{i}}{\partial w_{i}}=\alpha_{i}\frac{\partial^{2}h_{i}}{\partial w_{i}\partial\overline{w}_{i}}$
in that neighborhood.}\vspace{.5em}
\item \textbf{\label{hypo:Hypothesis-IV}}\hspace{-2\parindent}\textbf{Condition
IV}: \par\noindent{\itshape$\Omega$ is of finite type, $h$ satisfies
\hyporef{Hypothesis-III}, and, for each $i$, $h_{i}\left(w_{i}\right)=k_{i}\left(\left|w_{i}\right|\right)$,
$k_{i}(t)\asymp t^{2q_{i}}$ (where $f\asymp g$ means that there
exist two constants $c>0$ and $C>0$ such that $cf\leq g\leq Cf$)
and $2q_{i}$ strictly larger than the type of $\Omega$.}\vspace{.5em}
\item \textbf{\label{hypo:Hypothesis-V}}\hspace{-2\parindent}\textbf{Condition
V}: \par\noindent{\itshape$\Omega$ is of finite type, $h$ satisfies
\hyporef{Hypothesis-II} with $m_{i}=1$ for every $i$, and, for
each $i$, $\left|\frac{\partial^{2}h_{i}}{\partial w_{i}^{2}}\right|\leq\triangle h_{i}$,
$h_{i}\left(w_{i}\right)=k_{i}\left(\left|w_{i}\right|\right)$ and
$k_{i}(t)\asymp t^{2q_{i}}$ and $q_{i}$ strictly larger than the
type of $\Omega$.}
\end{enumerate}

\hyporef{Hypothesis-I} is used in \secref{Pseudo-convexity-of-Omega-tilde}
to get the pseudo-convexity of $\widetilde{\Omega}$. \hyporef{Hypothesis-II}
is used in \secref{Type-finiteness} to ensure that $\widetilde{\Omega}$
is of finite type. In \secref{Geometric-separation} we use Condition
\ref{hypo:Hypothesis-III} to obtain that $\widetilde{\Omega}$ have
a Levi form locally diagonalizable (and thus is ``completely geometrically
separated'' (\cite{CD08})) when $\Omega$ is a finite type domain
in $\mathbb{C}^{2}$. Finally, Conditions \ref{hypo:Hypothesis-IV}
and \ref{hypo:Hypothesis-V} are used in Sections \ref{sec:C2-case}
and \ref{sec:convex-case} to get pointwise estimates of a weighted
Bergman kernel in these two cases.
\begin{sexmexample}
\label{exa:Basic_example_function_h}Let $m_{i}\in\mathbb{N}_{*}$,
$1\leq i\leq p$, $m=\sum_{i}m_{i}$, $q_{i}\in\mathbb{N}_{*}$, $1\leq i\leq p$.
Then the function
\[
h:\, w=\left(w_{1},\ldots w_{p}\right)\in\prod_{i}\mathbb{C}^{m_{i}}=\mathbb{C}^{m}\mapsto\sum_{i}\left|w_{i}\right|^{2q_{i}}
\]
satisfies Conditions \ref{hypo:Hypothesis-I} (whatever $s$) and
\ref{hypo:Hypothesis-II}, and all other conditions if $m_{i}=1$
for every $i$ and the $q_{i}$ are large enough.\end{sexmexample}
\begin{proof}
Let us denote $h_{i}\left(w_{i}\right)=\left|w_{i}\right|^{2q_{i}}$,
$1\leq i\leq p$. Then $\overline{\partial}h=\sum_{i}\overline{\partial}h_{i}$,
$\partial\overline{\partial}h=\sum_{i}\partial\overline{\partial}h_{i}$
and, a simple calculation and Cauchy-Schwarz inequality give, for $1\leq i\leq p$,
\begin{eqnarray*}
i\partial\overline{\partial}h_{i} & = & iq_{i}^{2}\left|w_{i}\right|^{2q_{i}-4}\left(\sum_{j}\overline{w_{i}^{j}}dw_{i}^{j}\right)\wedge\left(\sum_{j}w_{i}^{j}d\overline{w_{i}^{j}}\right)+\\
 &  & +iq_{i}\left|w_{i}\right|^{2q_{i}-2}\left(\sum_{j}dw_{i}^{j}\wedge d\overline{w_{i}^{j}}\right)-iq_{i}\left|w_{i}\right|^{2q_{i}-4}\left(\sum_{j}\overline{w_{i}^{j}}dw_{i}^{j}\right)\wedge\left(\sum_{j}w_{i}^{j}d\overline{w_{i}^{j}}\right)\\
 & \geq & i\frac{1}{h_{i}}\partial h_{i}\wedge\overline{\partial}h_{i}.
\end{eqnarray*}
Then, by Cauchy-Schwarz inequality, 
\[
ih\left\langle \partial\overline{\partial}h;t,\overline{t}\right\rangle \geq\left(\sum_{i}h_{i}\right)\left(\sum\frac{i\left\langle \partial h_{i}\wedge\overline{\partial}h_{i};t_{i},\overline{t_{i}}\right\rangle }{h_{i}}\right)\geq\left(\sum\left\langle \partial h_{i}\wedge\overline{\partial}h_{i};t_{i},\overline{t_{i}}\right\rangle ^{\nicefrac{1}{2}}\right)^{2}\geq i\left\langle \partial h\wedge\overline{\partial}h;t,\overline{t}\right\rangle 
\]
and \hyporef{Hypothesis-I} is satisfied for any $s'\geq0$.
\end{proof}

\subsection{\label{sec:Pseudo-convexity-of-Omega-tilde}Pseudo-convexity of \texorpdfstring{$\widetilde{\Omega}$}{Omega-tilde}}

In general, even for very simple functions $h$, the domain $\widetilde{\Omega}$
is not pseudo-convex. For example, if $\Omega$ is the unit ball it
is very easy to write a defining function $\rho$ of $\Omega$ such
that the domain $\left\{ \left(z,w\right)\in\mathbb{C}^{n}\times\mathbb{C}\mbox{, s. t. }\rho(z)+\left|w\right|^{2}<0\right\} $
is not pseudo-convex (see \remref{Pseudoconvexity-rho-plus-modulus-w}).

If $\Omega$ admits a smooth defining function which is pluri-subharmonic
in $\overline{\Omega}$, it suffices to take $h$ pluri-subharmonic.
But this is not the general case (c.f. \cite{DF77-nebenhuelle}),
so, we have to choose a convenient defining function for $\Omega$:
\begin{spprop}
\label{prop:Local-pseudoconvexity-for-rho-plus-h}Let $Z\in\mathbb{C}^{n}$
and let $U$ (resp. $V$) be an open neighborhood of $Z$ (resp. of
the origin in $\mathbb{C}^{m}$). Let $\rho\,:U\rightarrow\mathbb{R}$
(resp. $h\,:V\rightarrow\mathbb{R}_{+}$) be a smooth function such
that $\rho\left(Z\right)=0$ and $\nabla\rho$ does not vanishes in
$U$ (resp $\nabla h(w)\neq0$ if and only if $w\neq0$ and $h(w)\neq0$
if $w\neq0$). Assume that there exist $s\in\left]0,1\right]$ and
$s'\in\left[0,s\right[$ such that $-(-\rho)^{s}$ is strictly pluri-subharmonic
in the open set $G=\left\{ z\in U\mbox{ s. t. }\rho(z)<0\right\} $
and $h^{s'}$ is pluri-subharmonic in $V$. Let $\partial\widetilde{G}$
be the set
\[
\partial\widetilde{G}=\left\{ \left(z,w\right)\in U\times V\mbox{ s. t. }r(z,w)=\rho(z)+h(w)=0\right\} .
\]
Then, if not empty $\partial\widetilde{G}$ is a smooth hypersurface,
and, at every point $\left(z^{1},w^{1}\right)\in\partial\widetilde{G}$:
\begin{enumerate}
\item The restriction of $i\partial\overline{\partial}r$ to the complex
tangent space of $\partial\widetilde{G}$ at $\left(z^{1},w^{1}\right)$
is non-negative.
\item If $w^{1}\neq0$, the restriction of $i\partial\overline{\partial}r$
to the complex tangent space of $\partial\widetilde{G}$ at $\left(z^{1},w^{1}\right)$
is positive definite if $h$ is strictly pluri-subharmonic at $w^{1}$.
\end{enumerate}
\end{spprop}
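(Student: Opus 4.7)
The plan is to work directly with the decomposition $i\partial\overline{\partial}r=i\partial\overline{\partial}\rho+i\partial\overline{\partial}h$, restrict it to a complex tangent vector at $(z^{1},w^{1})\in\partial\widetilde{G}$, and exploit the boundary identity $\rho(z^{1})=-h(w^{1})$. To begin, $\partial\widetilde{G}$ is smooth because $\nabla r=(\nabla\rho,\nabla h)$ cannot vanish on it: if $w^{1}=0$ then $h(w^{1})=0$ forces $\rho(z^{1})=0$ and $\nabla\rho\neq 0$ on $U$ by hypothesis, while if $w^{1}\neq 0$ then $\nabla h(w^{1})\neq 0$ by hypothesis. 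Next, I would convert the two plurisubharmonicity hypotheses into pointwise lower bounds. The chain-rule identity
\[
i\partial\overline{\partial}\bigl(-(-\rho)^{s}\bigr)=s(-\rho)^{s-1}\,i\partial\overline{\partial}\rho+s(1-s)(-\rho)^{s-2}\,i\partial\rho\wedge\overline{\partial}\rho
\]
shows that strict plurisubharmonicity of $-(-\rho)^{s}$ in $G$ is equivalent to $i\partial\overline{\partial}\rho\geq\frac{1-s}{\rho}\,i\partial\rho\wedge\overline{\partial}\rho+\widetilde{c}\,\omega_{0}$ locally in $G$, where $\omega_{0}$ denotes the standard K\"ahler form and $\widetilde{c}>0$; the analogous computation with $h^{s'}$ yields $i\partial\overline{\partial}h\geq\frac{1-s'}{h}\,i\partial h\wedge\overline{\partial}h$.

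For part (1), I would restrict to a complex tangent vector $(t,\tau)$, which by definition satisfies $\partial\rho(z^{1})\cdot t+\partial h(w^{1})\cdot\tau=0$, so that $A:=|\partial\rho(z^{1})\cdot t|^{2}=|\partial h(w^{1})\cdot\tau|^{2}$. Summing the two Levi-form estimates and using $\rho(z^{1})=-h(w^{1})$ to combine the rational coefficients:
\[
\langle i\partial\overline{\partial}r;(t,\tau),\overline{(t,\tau)}\rangle\geq\Bigl(\frac{1-s}{\rho(z^{1})}+\frac{1-s'}{h(w^{1})}\Bigr)A=\frac{s-s'}{h(w^{1})}\,A\geq 0,
\]
the key algebraic fact being $s>s'$, which makes the combined coefficient non-negative.

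For part (2), with $w^{1}\neq 0$ and $h$ strictly plurisubharmonic at $w^{1}$, there is an additional bound $\langle i\partial\overline{\partial}h;\tau,\overline{\tau}\rangle\geq c'|\tau|^{2}$ for some $c'>0$. I would take a convex combination of this bound with the $h^{s'}$-bound using weight $\epsilon=(1-s)/(1-s')$, which lies in $[0,1)$ precisely because $s>s'$. This weight makes the coefficient of $A$ in the restricted Levi form vanish identically on $\partial\widetilde{G}$, so that only the two strict terms $\widetilde{c}\,|t|^{2}$ (from the strict bound on $-(-\rho)^{s}$) and $c'\,\frac{s-s'}{1-s'}\,|\tau|^{2}$ (from the residual $(1-\epsilon)$ share) survive, giving a strictly positive lower bound in all directions $(t,\tau)\neq 0$. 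The one step requiring a genuine idea is this choice of weight: the slack $s>s'$ is exactly what lets the rank-one $|\partial h(w^{1})\cdot\tau|^{2}$ control be traded for full $|\tau|^{2}$ control on the complex tangent space.
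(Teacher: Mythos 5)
Your argument is essentially the paper's: decompose $i\partial\overline{\partial}r$ into the two Levi forms, use the tangency relation together with $\rho\left(z^{1}\right)=-h\left(w^{1}\right)$ to combine the two singular coefficients, and exploit the slack $s>s'$. Part (2) is correct as written, and your explicit convex combination with weight $\epsilon=(1-s)/(1-s')$ is a clean way of making rigorous the step that the paper compresses into the single assertion that $i\partial\overline{\partial}h$ dominates the maximum of the two lower bounds; any weight in $\left[\frac{1-s}{1-s'},1\right[$ would do.

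The one point to repair is part (1) at a point with $w^{1}=0$: there $h\left(w^{1}\right)=\rho\left(z^{1}\right)=0$ and your combined coefficient $\frac{s-s'}{h\left(w^{1}\right)}A$ is of the form $0/0$, so this case must be treated separately (as the paper does). It is easy: $\nabla h(0)=0$ forces $A=0$, hence $t$ is complex-tangent to $\left\{ \rho=0\right\}$ at $z^{1}$, and $\left\langle i\partial\overline{\partial}\rho;t,\overline{t}\right\rangle \geq0$ follows from the interior inequality by letting $z\to z^{1}$ along the inward normal (the singular term is then $O(\delta)$ because $\left|\partial\rho(z)\cdot t\right|^{2}=O\left(\delta^{2}\right)$ while $\rho(z)\asymp-\delta$), while $\left\langle i\partial\overline{\partial}h;\tau,\overline{\tau}\right\rangle \geq0$ is just plurisubharmonicity of $h$. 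With that case added, the proof is complete.
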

\begin{proof}
Note that, since the gradient of $r$ does not vanishes on $U\times V$,
$\partial\widetilde{G}$ is a smooth hypersurface if it is not empty. Moreover,
$s'$ being $\leq1$, $h$ is pluri-subharmonic on $V$. The hypothesis on $\rho$ is
\begin{equation}
i\partial\overline{\partial}\rho\geq i\frac{1-s}{\rho}\partial\rho\wedge\overline{\partial}\rho+\varepsilon i\partial\overline{\partial}\left|z\right|^{2},\label{eq:Prop-D-F-def-func}
\end{equation}
where $\varepsilon$ is a non-negative function, strictly positive
in $G$.

Let $\left(z^{1},w^{1}\right)$ be a point of $\partial\widetilde{G}$
and $t=\left(t_{z},t_{w}\right)\in\mathbb{C}^{n}\times\mathbb{C}^{m}$
be a vector of the complex tangent space of $\partial\widetilde{G}$
at $\left(z^{1},w^{1}\right)$.

If $w^{1}=0$, as the hypotheses on $h$ imply $\nabla h(0)=0$, $t_{z}$
is ``tangent'' to $\rho$ at $z^{1}$ (i.e. $\left\langle \partial\rho,t_{z}\right\rangle\left(z^1\right) =0$).
Thus the inequality (\ref{eq:Prop-D-F-def-func}) and the pluri-subharmonicity of $h$ imply
$\left\langle i\partial\overline{\partial}r;t,\overline{t}\right\rangle \geq0$
showing (1).

Suppose now $w^{1}\neq0$. Note that, $t$ being tangent to $r$,
we have $\sum_{i}\frac{\partial\rho}{\partial z_{i}}\left(z^{1}\right)t_{z}^{i}=-\sum_{j}\frac{\partial h}{\partial w_{j}}\left(w^{1}\right)t_{w}^{j}$,
and $\rho\left(z^{1}\right)=-h\left(w^{1}\right)$. Then (\ref{eq:Prop-D-F-def-func})
gives
\[
\left\langle i\partial\overline{\partial}\rho\left(z^{1}\right);t_{z},\overline{t_{z}}\right\rangle \geq\frac{1-s}{-h\left(w^{1}\right)}\left|\sum_{j}\frac{\partial h}{\partial w_{j}}\left(w^{1}\right)t_{w}^{j}\right|^{2}+\varepsilon\left|t_{z}\right|^{2}.
\]

Thus
\[
\left\langle i\partial\overline{\partial}r\left(z^{1},w^{1}\right);t,\overline{t}\right\rangle \geq\frac{1-s}{-h\left(w^{1}\right)}\left|\sum_{j}\frac{\partial h}{\partial w_{j}}\left(w^{1}\right)t_{w}^{j}\right|^{2}+\left\langle i\partial\overline{\partial}h\left(w^{1}\right);t_{w},\overline{t_{w}}\right\rangle +\varepsilon\left|t_{z}\right|^{2}.
\]

The conclusion comes then from the fact that the hypothesis made on
$h$ is
\[
i\partial\overline{\partial}h\geq\max\left\{ i\frac{1-s'}{h}\partial h\wedge\overline{\partial}h,\varepsilon_{1}i\partial\overline{\partial}\left|w\right|^{2}\right\} ,
\]
with $0\leq s'<s$ and $\varepsilon_{1}$ a non-negative function,
which is positive at $w^{1}$ if $h$ is strictly pluri-subharmonic
at that point: $s'\leq s$ gives immediately (1), and, if $h$ is strictly pluri-subharmonic
at $w^{1}$, then $\varepsilon\left(z^{1}\right)>0$, $\varepsilon_{1}\left(w^{1}\right)>0$
and
\[
\left\langle i\partial\overline{\partial}r\left(z^{1},w^{1}\right);t,\overline{t}\right\rangle \geq\varepsilon_{2}\left|t\right|^{2},
\]
for $\varepsilon_{2}>0$ small.\end{proof}
\begin{spprop}
\label{prop:Global_pseudoconvexity-for-rho-plus-h}Let $\Omega$ be
a bounded pseudo-convex domain, with smooth boundary. Let $h$ be
a smooth non-negative function defined in $\mathbb{C}^{m}$. Let $\rho=\rho_{s}$
be a smooth defining function of $\Omega$ satisfying (\ref{eq:hypothesis-defining-func})
as stated in \secref{Diederich-Fornaess-def-func}.
\begin{enumerate}
\item \label{1-of-Global-pseudoconvexity-rho-plus-h}Then, if $h$ satisfies
\hyporef{Hypothesis-I} of \secref{Hypothesis_on_function_h}, $\widetilde{\Omega}$
is pseudo-convex.
\item \label{2-of-Global-pseudoconvexity-rho-plus-h}Moreover, for $\left(z^{0},w^{0}\right)\in\partial\widetilde{\Omega}$,
$w^{0}\neq0$ if, in addition, $h$ is strictly pluri-subharmonic
at $w^{0}$, $\widetilde{\Omega}$ is strictly pseudo-convex at $\left(z^{0},w^{0}\right)$.
\end{enumerate}
\end{spprop}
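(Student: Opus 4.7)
The plan is to deduce the global statement from the local Proposition \ref{prop:Local-pseudoconvexity-for-rho-plus-h} by applying it at each boundary point of $\widetilde{\Omega}$, combined with the Levi characterization of (strict) pseudo-convexity for a smoothly bounded domain.

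First I would check that $r(z,w)=\rho(z)+h(w)$ is a defining function of $\widetilde{\Omega}$. At a boundary point $(z^0,w^0)\in\partial\widetilde{\Omega}$ we have $\rho(z^0)=-h(w^0)\leq 0$. If $w^0=0$, then \hyporef{Hypothesis-I} forces $h(w^0)=0$ and hence $\rho(z^0)=0$, so $z^0\in\partial\Omega$ and $\nabla\rho(z^0)\neq 0$. If $w^0\neq 0$, then $\nabla h(w^0)\neq 0$, again by \hyporef{Hypothesis-I}. In either case $\nabla r(z^0,w^0)\neq 0$, so $\widetilde{\Omega}$ has smooth boundary.

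For statement (\ref{1-of-Global-pseudoconvexity-rho-plus-h}), fix any $(z^0,w^0)\in\partial\widetilde{\Omega}$ and pick a reference point $Z\in\partial\Omega$ close to $z^0$ (take $Z=z^0$ when $w^0=0$, and any point of $\partial\Omega$ sufficiently near $z^0$ otherwise), together with open neighborhoods $U$ of $Z$ in $\mathbb{C}^n$ and $V$ of the origin in $\mathbb{C}^m$ such that $z^0\in U$, $w^0\in V$ and $\nabla\rho$ does not vanish on $U$. Shrinking $U$ we may also assume $U\cap\{\rho<0\}\subset\Omega$, so that (\ref{eq:hypothesis-defining-func}) guarantees $-(-\rho)^s$ is strictly plurisubharmonic on $U\cap\{\rho<0\}$; \hyporef{Hypothesis-I} supplies all the remaining hypotheses of Proposition \ref{prop:Local-pseudoconvexity-for-rho-plus-h} (in particular the plurisubharmonicity of $h^{s'}$ on $V$ and the vanishing/nonvanishing properties of $h$ and $\nabla h$). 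Its first conclusion then yields that $i\partial\overline{\partial}r$ restricted to the complex tangent space of $\partial\widetilde{\Omega}$ at $(z^0,w^0)$ is non-negative. Since the point was arbitrary, $\widetilde{\Omega}$ is Levi pseudo-convex, hence pseudo-convex.

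The second assertion (\ref{2-of-Global-pseudoconvexity-rho-plus-h}) follows in the same way: when $w^0\neq 0$ and $h$ is strictly plurisubharmonic at $w^0$, the second conclusion of Proposition \ref{prop:Local-pseudoconvexity-for-rho-plus-h} upgrades non-negativity to strict positivity of the Levi form at $(z^0,w^0)$, which is by definition strict pseudo-convexity at that point. The essential content is already contained in the local proposition; the only mildly delicate bookkeeping is the case $w^0\neq 0$, where $z^0$ lies in the interior of $\Omega$ and the reference boundary point $Z$ in the local setup must be chosen separately from $z^0$. This is straightforward since \hyporef{Hypothesis-I} and (\ref{eq:hypothesis-defining-func}) are global assumptions, so the hypotheses of the local proposition can always be arranged by shrinking $U$ and enlarging $V$.
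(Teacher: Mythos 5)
Your reduction to \propref{Local-pseudoconvexity-for-rho-plus-h} works at every boundary point $\left(z^{0},w^{0}\right)$ at which $\nabla\rho\left(z^{0}\right)\neq0$, but it breaks down at the points where $\nabla\rho\left(z^{0}\right)=0$, and these points genuinely occur. Indeed, $\rho$ is a smooth function on the compact set $\overline{\Omega}$, negative inside and zero on the boundary, so it has an interior critical point $z^{0}$ (e.g.\ at its minimum); choosing $w^{0}\neq0$ with $h\left(w^{0}\right)=-\rho\left(z^{0}\right)$ (possible since $h(0)=0$ and $h\to+\infty$) gives a point $\left(z^{0},w^{0}\right)\in\partial\widetilde{\Omega}$ with $\nabla\rho\left(z^{0}\right)=0$. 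At such a point no neighborhood $U$ containing $z^{0}$ on which $\nabla\rho$ does not vanish exists, so the hypotheses of the local proposition cannot ``always be arranged by shrinking $U$ and enlarging $V$'' as you assert; moreover when $z^{0}$ is deep inside $\Omega$ there is no reference point $Z\in\partial\Omega$ ``sufficiently near'' $z^{0}$ in any useful sense. Your closing remark dismisses exactly the case that needs a separate argument.

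The paper splits the proof according to whether $\partial\rho\left(z^{0}\right)$ vanishes, not according to whether $w^{0}=0$. When $\partial\rho\left(z^{0}\right)=0$ one necessarily has $z^{0}\in\Omega$, and the Diederich--Forn\ae ss inequality (\ref{eq:Prop-D-F-def-func}) evaluated at $z^{0}$ reads $i\partial\overline{\partial}\rho\left(z^{0}\right)\geq\varepsilon\left(z^{0}\right)i\partial\overline{\partial}\left|z\right|^{2}>0$ (the term involving $\partial\rho\wedge\overline{\partial}\rho$ drops out), so $\rho$ is strictly pluri-subharmonic in a neighborhood of $z^{0}$ by continuity; hence $r=\rho+h$ is pluri-subharmonic near $\left(z^{0},w^{0}\right)$ (strictly so if $h$ is strictly pluri-subharmonic at $w^{0}$), which gives (strict) pseudo-convexity of $\widetilde{\Omega}$ there without any appeal to the local proposition. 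You need to add this case to complete the argument; the rest of your write-up (smoothness of $\partial\widetilde{\Omega}$ and the application of the local proposition where $\nabla\rho\neq0$) is fine.
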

\begin{proof}
Let $\left(z^{0},w^{0}\right)\in\partial\widetilde{\Omega}$. If $\partial\rho\left(z^{0}\right)\neq0$,
$\widetilde{\Omega}$ is pseudo-convex at $\left(z^{0},w^{0}\right)$
by \propref{Local-pseudoconvexity-for-rho-plus-h}. If $\partial\rho\left(z^{0}\right)=0$,
then $z^{0}\in\Omega$ and (\ref{eq:Prop-D-F-def-func}) shows that
$\rho$ is strictly pluri-subharmonic in a neighborhood of $z^{0}$.
Thus $r$ is pluri-subharmonic in a neighborhood of $\left(z^{0},w^{0}\right)$
and strictly pluri-subharmonic if $h$ is strictly pluri-subharmonic
at $w^{0}$.\end{proof}
\begin{srrem}
\label{rem:Pseudoconvexity-rho-plus-modulus-w}When $m=1$ and $h\left(w\right)=\left|w\right|^{2}$,
the domain $\widetilde{\Omega}=\left\{ \rho(z)+h(w)<0\right\} $ is pseudo-convex
at a point $\left(z^{1},w^{1}\right)\in\partial\widetilde{\Omega}$, $w^{1}\neq0$, if and only
if
\[
i\partial\overline{\partial}\rho\left(z^{1}\right)\geq\frac{i}{\rho\left(z^{1}\right)}\left(\partial\rho\wedge\overline{\partial}\rho\right)\left(z^{1}\right).
\]

For example, if $\rho$ is the signed distance to the boundary of
$\Omega$, by Oka's theorem, $\left\{ \rho(z)+\left|w\right|^{2}<0\right\} $
is pseudo-convex.\end{srrem}
\begin{sexmexample}
With the function $h$ given in \exaref{Basic_example_function_h},
(\ref{1-of-Global-pseudoconvexity-rho-plus-h}) of \propref{Global_pseudoconvexity-for-rho-plus-h}
applies for any pseudo-convex domain $\Omega$. Note also that, $\widetilde{h_{1}}(w)=h(w)+\left|w\right|^{2q}$,
$q\in\mathbb{N}_{*}$ satisfies the condition stated in (\ref{2-of-Global-pseudoconvexity-rho-plus-h})
of the proposition.\end{sexmexample}
\begin{proof}
As noted in the proof of \exaref{Basic_example_function_h}, $i\partial\overline{\partial}\left(\left|w\right|^{2q}\right)\geq\frac{i}{\left|w\right|^{2q}}\partial\left(\left|w\right|^{2q}\right)\wedge\overline{\partial}\left(\left|w\right|^{2q}\right)$.\end{proof}
\begin{srrem}
For $\Omega$ pseudo-convex in $\mathbb{C}^{n}$, E. Ligocka considered,
in \cite{Lig89}, the domains 
\[
\widetilde{\Omega}_{N,k}=\left\{ \left(z,w\right)\in\mathbb{C}^{n}\times\mathbb{C}^{m}\mbox{ s. t. }\rho(z)+\left|w\right|^{2Nk}<0\right\} ,
\]
where $k$ is a sufficiently large integer such that the defining
function $\rho$ of $\Omega$ satisfies that $-\left(-\rho\right)^{\nicefrac{1}{k}}$
is strictly plurisubharmonic in $\Omega$ and $N$ a positive integer.
She showed that if $\Omega$ is ``weakly regular'', then so is $\widetilde{\Omega}_{N,k}$.
\end{srrem}

\subsection{\label{sec:Type-finiteness}Type finiteness of \texorpdfstring{$\widetilde{\Omega}$}{Omega-tilde}}

We now investigate the question concerning the type of the domain
$\widetilde{\Omega}$ defined by \eqref{Definition-Omega-tilde} with
a defining function $\rho$ of $\Omega$ \emph{which does not necessarily
satisfy (\ref{eq:hypothesis-defining-func})}.

Let us introduce first a notation. If $g$ is a real or complex valued
smooth function defined in a neighborhood of the origin in $\mathbb{R}^{d}$,
we call the \emph{order} of $g$ at the origin the integer $\mbox{ord}_{0}(g)$
defined by $\mbox{ord}_{0}(g)=\infty$ if $g^{(\alpha)}(0)=0$ for
all multi-index $\alpha\in\mathbb{N}^{d}$ and 
\[
\mbox{ord}_{0}(g)=\min\left\{ k\in\mathbb{N}\mbox{ such that there exists }\alpha\in\mathbb{N}^{d},\,\left|\alpha\right|=\sum\alpha_{i}=k\mbox{ such that }g^{(\alpha)}(0)\neq0\right\} 
\]
otherwise. Moreover if $f=\left(f_1,\ldots,f_q\right)$ is a smooth function defined in a neighborhood
of the origin in $\mathbb{R}^{d}$ with values in $\mathbb{R}^q$ the \emph{order} of $f$ at the origin is
$\mbox{ord}_{0}(f)=\min\left\{ \mbox{ord}_{0}\left(f_{i}\right),\,1\leq i\leq q\right\} $.
If $h$ is a smooth function defined in a neighborhood
of the origin in $\mathbb{C}^{m}$, then, for all function $\varphi$
from the unit disc of the complex plane into $\mathbb{C}^{m}$ such
that $\varphi(0)=0$, $h\circ\varphi$ is smooth in a neighborhood
of the origin in $\mathbb{C}$. Then we call the \emph{type} of $h$
at the origin the supremum of
$\frac{\mbox{ord}_{0}\left(h\circ\varphi\right)}{\mbox{ord}_{0}\left(\varphi\right)}$,
taken over all non-zero holomorphic function $\varphi$ from the unit
disc of the complex plane into $\mathbb{C}^{m}$ such that $\varphi(0)=0$.
If this supremum is finite, we will say that $h$ is of \emph{finite
type} at the origin and we will denote this supremum by $\mbox{typ}_{0}(h)$.
Moreover, if $k$ is a smooth function defined in a neighborhood of
a point $z_{0}\in\mathbb{C}^{m}$, the type $\mbox{typ}_{z_{0}}\left(k\right)$
of $k$ at $z_{0}$ is $\mbox{typ}_{0}\left(h_{k}\right)$ where $h_{k}(z)=k\left(z_{0}+z\right)$
and we say that $k$ is of finite type at $z_{0}$ if $\mbox{typ}_{z_{0}}\left(k\right)<+\infty$.
\begin{spprop}
\label{prop:Local-rho+h-negative-finite-type}Let $z_{0}\in\mathbb{C}^{n}$
and $U$ (resp. $V$) be an open neighborhood of $z_{0}$ (resp. of
the origin in $\mathbb{C}^{m}$). Let $\rho\,:U\rightarrow\mathbb{R}$
(resp. $h\,:V\rightarrow\mathbb{R}_{+}$) be a smooth function such
that $\rho\left(z_{0}\right)=0$ and $\nabla\rho$ does not vanishes
in $U$ (resp $\nabla h(w)\neq0$ if $w\neq0$ and $h(w)\neq0$ if
and only if $w\neq0$). Assume that the restriction of $i\partial\overline{\partial}\rho$
to the complex tangent space to the hypersurface $\partial G=\left\{ z\in U\mbox{ s. t. }\rho(z)=0\right\} $
is non-negative, that $\partial G$ is of finite type $\tau$ at the
point $z_{0}$ and that $h$ is of finite type $\mathrm{typ}_{0}(h)$
at the origin.

Then the boundary of $\widetilde{G}=\left\{ \left(z,w\right)\in U\times V\mbox{ s. t. }r(z,w)=\rho(z)+h(w)<0\right\} $
is of finite type $\max\left(\tau,\mathrm{typ}_{0}(h)\right)$ at
the point $\left(z_{0},0\right)$.\end{spprop}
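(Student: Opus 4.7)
The D'Angelo type of $\partial\widetilde{G}$ at $(z_{0},0)$ is
\[
\sup_{\Phi}\frac{\mathrm{ord}_{0}(r\circ\Phi)}{\mathrm{ord}_{0}(\Phi-(z_{0},0))},
\]
the supremum being taken over non-constant holomorphic $\Phi=(\varphi,\psi)\colon(\mathbb{D},0)\to(\mathbb{C}^{n+m},(z_{0},0))$; observe that $\partial\widetilde{G}$ is a smooth hypersurface near $(z_{0},0)$ since $\nabla r(z_{0},0)=(\nabla\rho(z_{0}),\nabla h(0))=(\nabla\rho(z_{0}),0)\neq0$. Writing $\tilde{\tau}:=\max(\tau,\mathrm{typ}_{0}(h))$, the plan is to establish matching lower and upper bounds for this type.

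For the lower bound, I would test with ``coordinate'' curves. Since $h(0)=0$ and $\rho(z_{0})=0$, the choice $\Phi=(\varphi,0)$ gives $r\circ\Phi=\rho\circ\varphi$ with $\mathrm{ord}_{0}(\Phi-(z_{0},0))=\mathrm{ord}_{0}(\varphi-z_{0})$, so letting $\varphi$ approach the supremum in the definition of the type $\tau$ of $\partial G$ yields the lower bound $\geq\tau$. Analogously $\Phi=(z_{0},\psi)$ yields $\geq\mathrm{typ}_{0}(h)$, combining to $\geq\tilde{\tau}$.

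For the upper bound, let $\Phi=(\varphi,\psi)$ be non-constant. The degenerate cases $\psi\equiv0$ or $\varphi\equiv z_{0}$ reduce directly to the type hypotheses on $\partial G$ or $h$; otherwise both $k:=\mathrm{ord}_{0}(\varphi-z_{0})$ and $l:=\mathrm{ord}_{0}(\psi)$ are finite. Setting $a:=\mathrm{ord}_{0}(\rho\circ\varphi)\leq\tau k$ and $b:=\mathrm{ord}_{0}(h\circ\psi)\leq\mathrm{typ}_{0}(h)\cdot l$, one has $\mathrm{ord}_{0}(\Phi-(z_{0},0))=\min(k,l)$ and $\min(a,b)\leq\min(\tau k,\mathrm{typ}_{0}(h)\cdot l)\leq\tilde{\tau}\min(k,l)$. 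If the leading degree-$a$ and degree-$b$ homogeneous parts $R_{a}$ of $\rho\circ\varphi$ and $H_{b}$ of $h\circ\psi$ in $(t,\bar t)$ do not annihilate each other in $r\circ\Phi$---which is automatic when $a\neq b$ and reduces to $R_{a}+H_{a}\not\equiv0$ when $a=b$---then $\mathrm{ord}_{0}(r\circ\Phi)=\min(a,b)$ and the ratio is bounded by $\tilde{\tau}$, as required.

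The main obstacle is therefore the cancellation case $a=b$ with $R_{a}+H_{a}\equiv0$. Since $h\circ\psi$ is non-negative with leading term $H_{a}$, one has $H_{a}\geq0$, so the cancellation forces $R_{a}=-H_{a}\leq0$ non-trivially; hence $\rho\circ\varphi\leq0$ to leading order, and $\varphi$ sends a punctured neighbourhood of $0$ into $G$. The plan is to rule this out by invoking pseudoconvexity through the pluriharmonic/mixed decomposition $\rho(z)=2\mathrm{Re}(P(z-z_{0}))+Q(z-z_{0},\overline{z-z_{0}})+O(|z-z_{0}|^{N+1})$, with $P$ a holomorphic polynomial and $Q$ purely mixed. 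Any non-zero pluriharmonic contribution to $R_{a}$ takes the form $2\mathrm{Re}(c\,t^{a})$ and is sign-indefinite as $\arg t$ varies, ruling out $R_{a}\leq0$; when the pluriharmonic part vanishes at degree $a$, the leading contribution to $R_{a}$ comes from $Q\circ(\varphi-z_{0},\overline{\varphi-z_{0}})$, and the Levi-form inequality $i\partial\bar{\partial}\rho|_{T^{1,0}\partial G}\geq0$, extended to the relevant higher-degree mixed terms via the standard D'Angelo/Catlin normal-form analysis for pseudoconvex finite-type hypersurfaces, forces this leading contribution to be $\geq0$. Either alternative excludes $R_{a}\leq0$ non-trivially, so the cancellation cannot occur and the bound $\leq\tilde{\tau}$ follows; I expect this normal-form/weighted-degree step to be the most delicate part of the proof.
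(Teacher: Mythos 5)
Your overall strategy coincides with the paper's: reduce to the non-cancellation statement $\mathrm{ord}_{0}(r\circ\Phi)=\min\left(\mathrm{ord}_{0}(\rho\circ\varphi),\mathrm{ord}_{0}(h\circ\psi)\right)$, observe that only the case of equal orders $a=b$ with $R_{a}+H_{a}\equiv0$ is dangerous, and use $H_{a}\geq0$ together with pseudoconvexity of $\rho$ to forbid it; the lower bound and the arithmetic $\min(a,b)\leq\max(\tau,\mathrm{typ}_{0}(h))\min(k,l)$ are fine. The gap is in how you exclude the cancellation. Your case (i) rests on a false claim: a non-zero pluriharmonic contribution $2\mathrm{Re}\left(c\,t^{a}\right)$ does \emph{not} make $R_{a}$ sign-indefinite, since the mixed part can dominate it --- e.g. $R_{2}=2\mathrm{Re}\left(t^{2}\right)-10\left|t\right|^{2}\leq0$ everywhere. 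And case (ii) is deferred to an unspecified ``normal-form analysis'' whose target, $R_{a}\geq0$ pointwise, is both unproved and stronger than what the hypothesis (non-negativity of $i\partial\overline{\partial}\rho$ only on complex tangent directions to $\partial G$) can deliver: it controls only the rotation-invariant part of $R_{a}$, not the whole homogeneous polynomial. So the heart of the proof is missing.

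What closes the argument --- and is what the paper does --- is a single Laplacian computation replacing both of your cases. Since $h\circ\psi\geq0$, its order is even, say $2k$, and $\triangle^{k}\left(h\circ\psi\right)(0)>0$ (the $\left|t\right|^{2k}$-coefficient of $H_{2k}$ is the mean of $H_{2k}\geq0$ over the unit circle, strictly positive unless $H_{2k}\equiv0$). If cancellation occurred, all derivatives of $\rho\circ\varphi$ of order $<2k$ would vanish at $0$, so $\varphi(\zeta)$ lies within $\mathrm{O}\left(\zeta^{2k}\right)$ of $\partial G$ and $\varphi'(\zeta)$ within $\mathrm{O}\left(\zeta^{2k-1}\right)$ of a vector tangent to $\partial G$ at the projected point; the hypothesis then gives $\left\langle i\partial\overline{\partial}\rho(\varphi);\varphi',\overline{\varphi'}\right\rangle \geq-C\left|\zeta\right|^{2k-1}$, and the positivity lemma for iterated Laplacians (Lemma 8.1 of \cite{CD08}) applied to $\triangle^{k}\left(\rho\circ\varphi\right)(0)=\triangle^{k-1}\left(\left\langle \partial\overline{\partial}\rho(\varphi);\varphi',\overline{\varphi'}\right\rangle \right)(0)$ yields $\triangle^{k}\left(\rho\circ\varphi\right)(0)\geq0$. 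Hence $\triangle^{k}\left(r\circ\Phi\right)(0)>0$, contradicting the assumed vanishing. Note that this controls only the $\left|t\right|^{2k}$-coefficient of $R_{2k}$, which is exactly enough because the corresponding coefficient of $H_{2k}$ is strictly positive; you should reformulate your cancellation analysis around this quantity rather than around pointwise sign conditions on $R_{a}$.
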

\begin{proof}
To simplify the notations, we can assume $z_{0}=0$. Let $\Phi=\left(\varphi,\psi\right)$
be a non-constant holomorphic function from the unit disc of the complex
plane into $U\times V$ such that $\Phi(0)=0$. We want to estimate
$\frac{\mbox{ord}_{0}(r\circ\Phi)}{\mbox{ord}_{0}(\Phi)}$, and therefore
we can assume that the first derivative of $r\circ\Phi$ vanishes
at the origin. As the gradient of $h$ vanishes at the origin, this
implies that the first derivative of $\rho\circ\varphi$ vanishes
at the origin which means that the gradient of $\varphi$ at $0$
is ``tangent'' to $\rho$ at $0$ (i.e.
$\sum\frac{\partial\varphi_{i}}{\partial\zeta}\frac{\partial\rho}{\partial z_{i}}(0)=0$).
Clearly, by the hypothesis made on $\partial G$, we can assume that the order
of $h\circ\psi$ at the origin is not
infinite, and then, by Lemma 8.1 of \cite{CD08} and the hypothesis made on $h$,
there exists an integer $k\geq1$ such that all the derivatives of
order $<2k$ of $h\circ\psi$ vanish at the origin and $\triangle^{k}\left(h\circ\psi\right)(0)>0$.

As $\mbox{ord}_{0}\left(\Phi\right)$ is the minimum of $\mbox{ord}_{0}\left(\varphi\right)$
and $\mbox{ord}_{0}\left(\psi\right)$, to prove the proposition it
suffices to prove that
\[
\mbox{ord}_{0}\left(r\circ\Phi\right)\leq\min\left\{ \mbox{ord}_{0}\left(\rho\circ\varphi\right),\mbox{ord}_{0}\left(h\circ\psi\right)\right\} =\min\left\{ \mbox{ord}_{0}\left(\rho\circ\varphi\right),2k\right\} 
\]
(note that this not a trivial consequence of the fact that $\rho$
and $h$ are decoupled).

Note first that, if $\mbox{ord}_{0}\left(\rho\circ\varphi\right)<2k$
or $\mbox{ord}_{0}\left(\rho\circ\varphi\right)>2k$, the inequality
is obvious. Thus we can assume $\mbox{ord}_{0}\left(\rho\circ\varphi\right)=2k$,
and we have to prove that $\mbox{ord}_{0}\left(r\circ\Phi\right)=2k$.

Suppose it is not the case: it follows that all the derivatives
of $r\circ\Phi$ of order $\leq2k$ vanish at $0$. Consider $\triangle^{k}\left(\rho\circ\varphi\right)(0)$.
As $\varphi$ is holomorphic, we have
\begin{equation}
\triangle^{k}\left(\rho\circ\varphi\right)(0)=\triangle^{k-1}\left(\left\langle \partial\overline{\partial}\rho(\varphi);\varphi',\overline{\varphi'}\right\rangle \right)(0).\label{eq:laplacian-to-k-rho-circle-phi}
\end{equation}
As all the derivatives of $\rho\circ\varphi$ of order less or equal
than $2k-1$ vanish at the origin, we have $\rho\circ\varphi\left(\zeta\right)=\mbox{O}\left(\zeta^{2k}\right)$.
For $\zeta$ small, let $\xi=\xi(\zeta)$ be the projection of $\varphi(\zeta)$
on $\left\{ z\in U\mbox{ s. t. }\rho(z)=0\right\} $ so that $\varphi(\zeta)-\xi=\mbox{O}\left(\zeta^{2k}\right)$.
By the hypothesis $\mbox{ord}_{0}\left(\rho\circ\varphi\right)=2k$,
all the derivatives of $\left\langle \partial\rho(\varphi);\varphi'\right\rangle $
of order less or equal than $2k-2$ vanish at the origin, which implies
that there exists a vector $T=T(\zeta)$ tangent to $\left\{ \rho=\rho(\xi)\right\} $ at the
point $\xi$ such that $\varphi'(\zeta)=T+\mbox{O}\left(\zeta^{2k-1}\right)$.
Then, by hypothesis on the hypersurface $\partial G$, we have
\[
\left\langle i\partial\overline{\partial}\rho(\varphi(\zeta));\varphi'(\zeta),\overline{\varphi'(\zeta)}\right\rangle =\left\langle i\partial\overline{\partial}\rho(\xi);T,\overline{T}\right\rangle +\mbox{O}\left(\zeta^{2k-1}\right)\geq-C\left|\zeta\right|^{2k-1}.
\]

Applying Lemma 8.1 of \cite{CD08} to the positive function 
\[
\left\langle i\partial\overline{\partial}\rho(\varphi(\zeta));\varphi'(\zeta),\overline{\varphi'(\zeta)}\right\rangle +C\left|\zeta\right|^{2k-1},
\]
by (\ref{eq:laplacian-to-k-rho-circle-phi}), we get $\triangle^{k}\left(\rho\circ\varphi\right)(0)\geq0$
which is impossible since $0=\triangle^{k}\left(r\circ\Phi\right)(0)=\triangle^{k}\left(\rho\circ\varphi\right)(0)+\triangle^{k}\left(h\circ\psi\right)(0)>0$.
\end{proof}

Recall that in \propref{Local-rho+h-negative-finite-type} we do not
assume that $\rho$ necessarily satisfies (\ref{eq:hypothesis-defining-func}).

Applying this proposition to a general pseudo-convex domain, we get:
\begin{ccorsp}
\label{cor:Global_finite-type_rho+h}Assume that $\rho=\rho_{s}$
where $\rho_{s}$ satisfy (\ref{eq:hypothesis-defining-func}) as
stated in \secref{Diederich-Fornaess-def-func}.
\begin{enumerate}
\item \label{1-Cor-global-rho-plus-h-finite-type}Assume that $h$ satisfies
\hyporef{Hypothesis-I} of \secref{Hypothesis_on_function_h}. Let
$z^{0}$ be a boundary point of $\Omega$. If $\partial\Omega$ is
of finite type $\tau$ at $z^{0}$ and if $h$ is of finite type $\mathrm{typ}_{0}(h)$
at the origin, then $\partial\widetilde{\Omega}$ is of finite type
$\max\left(\tau,\mathrm{{typ}}_{0}(h)\right)$ at the point $\left(z^{0},0\right)$.
Moreover, if $h$ is strictly pluri-subharmonic in $\mathbb{C}^{n}\setminus\left\{ 0\right\} $
then $\partial\widetilde{\Omega}$ is strictly pseudo-convex at every
boundary point $\left(z^{0},w^{0}\right)$ such that $w^{0}\neq0$.
\item \label{2-Cor-global-rho-plus-h-finite-type}Assume that $h$ satisfy
\hyporef{Hypothesis-II} of \secref{Hypothesis_on_function_h}. Then
$\widetilde{\Omega}$ is pseudo-convex and, at every point $\left(z^{0},w^{0}\right)\in\partial\widetilde{\Omega}$,
$w^{0}\neq0$, $\partial\widetilde{\Omega}$ is of finite type $\max_{i\mbox{ s. t. }w_{i}^{0}=0}\left\{ \mathrm{{typ}}_{0}\left(h_{i}\right)\right\} $
if there exists some $i$ such that $w_{i}^{0}=0$ and is strictly pseudo-convex
if $w_{i}^{0}\neq0$ for all $i$.
\end{enumerate}
\end{ccorsp}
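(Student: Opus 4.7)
For part (1), the plan is to invoke \propref{Local-rho+h-negative-finite-type} directly at the point $\left(z^{0},0\right)$. Under \hyporef{Hypothesis-I}, $h$ fulfills all the local hypotheses on ``$h$'' in that proposition, and since $\rho=\rho_{s}$ satisfies $i\partial\overline{\partial}\rho\geq i\frac{1-s}{\rho}\partial\rho\wedge\overline{\partial}\rho$ on $\Omega$, \remref{-log-of--D-F-def-function-psh}(2) ensures that the restriction of $i\partial\overline{\partial}\rho$ to the complex tangent space of $\partial\Omega$ is non-negative at every boundary point. Combined with the finite type hypotheses, \propref{Local-rho+h-negative-finite-type} gives the type $\max\left(\tau,\mathrm{typ}_{0}(h)\right)$ at $\left(z^{0},0\right)$. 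The ``moreover'' clause is then immediate from \propref{Global_pseudoconvexity-for-rho-plus-h}(2), since strict pluri-subharmonicity of $h$ outside the origin in particular holds at any $w^{0}\neq0$.

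For part (2), I would first verify that \hyporef{Hypothesis-II} implies \hyporef{Hypothesis-I} with $s'=0$: the conditions on the zero set, gradient, and growth of $h=\sum h_{i}$ are immediate, and the key inequality $i\partial\overline{\partial}h\geq i\frac{1}{h}\partial h\wedge\overline{\partial}h$ follows from the Cauchy--Schwarz computation displayed in the proof of \exaref{Basic_example_function_h}, using that each $\log h_{i}$ is pluri-subharmonic and the $h_{i}$'s depend on disjoint variables. Pseudo-convexity of $\widetilde{\Omega}$ then follows from \propref{Global_pseudoconvexity-for-rho-plus-h}(1). For a point $\left(z^{0},w^{0}\right)\in\partial\widetilde{\Omega}$ with $w_{i}^{0}\neq0$ for every $i$, \hyporef{Hypothesis-II}(b) gives that each $h_{i}$ is strictly pluri-subharmonic at $w_{i}^{0}$; consequently so is $h$ at $w^{0}$, and \propref{Global_pseudoconvexity-for-rho-plus-h}(2) yields strict pseudo-convexity of $\widetilde{\Omega}$ at that point.

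The core of the work lies in the remaining case, where $I_{0}=\left\{ i\,:\,w_{i}^{0}=0\right\} $ is non-empty with complement $I_{1}$. My plan is to reduce to \propref{Local-rho+h-negative-finite-type} by absorbing the $i\in I_{1}$ components into the defining function: set $\tilde{\rho}(z,w_{I_{1}})=\rho(z)+\sum_{i\in I_{1}}h_{i}(w_{i})$ and $\tilde{h}=\sum_{i\in I_{0}}h_{i}$, so that locally near $\left(z^{0},w^{0}\right)$ the domain $\widetilde{\Omega}$ is $\left\{ \tilde{\rho}+\tilde{h}<0\right\} $ in $\mathbb{C}^{n+m}$. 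The function $\tilde{h}$ inherits the local hypotheses on ``$h$'' from the $h_{i}$'s, and a short argument (non-negativity of the summands and a test curve concentrated in a single block) shows that $\mathrm{typ}_{0}(\tilde{h})=\max_{i\in I_{0}}\mathrm{typ}_{0}(h_{i})$.

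The main obstacle will be verifying the hypotheses on $\tilde{\rho}$ near $\left(z^{0},w_{I_{1}}^{0}\right)$. The gradient is non-vanishing because $\nabla h_{i}(w_{i}^{0})\neq0$ for $i\in I_{1}$. The non-negativity of the Levi form of $\tilde{\rho}$ on the complex tangent space of $\left\{ \tilde{\rho}=0\right\} $ at each nearby boundary point is obtained exactly as in the proof of \propref{Local-pseudoconvexity-for-rho-plus-h}: using the tangency relation and $\rho=-h'$ on the hypersurface (where $h'=\sum_{i\in I_{1}}h_{i}$), and combining the Diederich--Forn\ae ss inequality on $\rho$ with $i\partial\overline{\partial}h'\geq i\frac{1}{h'}\partial h'\wedge\overline{\partial}h'$ (obtained from the same Cauchy--Schwarz trick applied to the $\log h_{i}$'s), the Levi form is bounded below by $\frac{s}{h'}\left|\left\langle \partial h';t_{w_{I_{1}}}\right\rangle \right|^{2}\geq0$. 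At the specific point $\left(z^{0},w_{I_{1}}^{0}\right)$, the strict pluri-subharmonicity of $h'$ at $w_{I_{1}}^{0}$ (all $w_{i}^{0}\neq0$ for $i\in I_{1}$) together with the extra $\varepsilon\left|t_{z}\right|^{2}$ term coming from the strict pluri-subharmonicity of $-(-\rho)^{s}$ in $\Omega$ upgrade this to strict positivity on the complex tangent space, so $\left\{ \tilde{\rho}=0\right\} $ is strictly pseudo-convex at $\left(z^{0},w_{I_{1}}^{0}\right)$, hence of finite type $2$ there. \propref{Local-rho+h-negative-finite-type} applied to $\tilde{\rho}$ and $\tilde{h}$ then produces the type $\max\left(2,\max_{i\in I_{0}}\mathrm{typ}_{0}(h_{i})\right)=\max_{i\in I_{0}}\mathrm{typ}_{0}(h_{i})$ (since each $2q_{i}\geq2$), which is the claimed conclusion.
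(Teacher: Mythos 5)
Your argument is correct and follows essentially the same route as the paper: part (1) as a direct specialization of \propref{Local-rho+h-negative-finite-type} together with \propref{Global_pseudoconvexity-for-rho-plus-h}, and part (2) by absorbing the blocks with $w_{i}^{0}\neq0$ into a strictly pseudo-convex auxiliary defining function $\rho(z)+\sum_{i\in I_{1}}h_{i}\left(w_{i}\right)$ and reapplying \propref{Local-rho+h-negative-finite-type} to the remaining sum $\sum_{i\in I_{0}}h_{i}$ — which is exactly the paper's $w=\left(w',w''\right)$ decomposition. You merely supply some details the paper leaves implicit (that \hyporef{Hypothesis-II} implies \hyporef{Hypothesis-I} with $s'=0$, and that the type of the residual sum equals the maximum of the individual types), and these details check out.
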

\begin{proof}
The first part of (\ref{1-Cor-global-rho-plus-h-finite-type}) is
a special case of \propref{Local-rho+h-negative-finite-type}, and
the second part is stated in \propref{Global_pseudoconvexity-for-rho-plus-h}.

Let us now prove (\ref{2-Cor-global-rho-plus-h-finite-type}). The
pseudo-convexity of $\widetilde{\Omega}$ follows from the results of \secref{Pseudo-convexity-of-Omega-tilde}.
If $w_{i}^{0}\neq0$ for all $i$, as before, $\partial\widetilde{\Omega}$
is strictly pseudo-convex at $\left(z^{0},w^{0}\right)$. So, assume
that there exists some $i$ such that $w_{i}^{0}=0$. Without loss
of generality, we can suppose that $w_{k+1}^{0}=\ldots=w_{p}^{0}=0$,
$k<p$, and $w_{l}^{0}\neq0$ for $1\leq l\leq k$. Let us denote
$w=\left(w',w''\right)$, with $w'=\left(w_{1},\ldots,w_{k}\right)$,
$w''=\left(w_{k+1},\ldots,w_{p}\right)$, and $\rho_{1}\left(z,w'\right)=\rho(z)+\sum_{i=1}^{k}h_{i}\left(w_{i}\right)$.
In a neighborhood of $\left(z^{0},w'^{0}\right)$, $\nabla\rho_{1}$
does not vanish, $\left\{ \rho_{1}<0\right\} $ is strictly pseudo-convex,
and we can apply \propref{Local-rho+h-negative-finite-type} to the
domain $\rho_{1}\left(z,w'\right)+\sum_{i=k+1}^{p}h_{i}\left(w_{i}\right)<0$
and the function $h_{1}\left(w''\right)$ at the point $\left(z^{0},w'^{0},0\right)$.
\end{proof}

When $\Omega$ admits a pluri-subharmonic defining function which
is of finite type (in the sense defined at the beginning of the section)
everywhere in $\overline\Omega$, the proposition gives:
\begin{ccorsp}
\label{cor:Global-finite-type-psh-def-func}Assume $\Omega$ admits
a defining function $\rho$ pluri-subharmonic in a neighborhood of
$\overline{\Omega}$ and of finite type in $\overline{\Omega}$. Assume
that $h$ satisfies \hyporef{Hypothesis-II} of \secref{Hypothesis_on_function_h}.
Then the domain $\widetilde{\Omega}$, defined with $\rho$, is pseudo-convex
of finite type. More precisely, at every point $\left(z^{1},w^{1}\right)\in\partial\widetilde{\Omega}$
the type of $\partial\widetilde{\Omega}$ is bounded by $\max\left\{ {\mathrm{typ}}_{z^{1}}(\partial\Omega),{\mathrm{typ}}_{0}(h_{i}),\,1\leq i\leq p\right\} $
if $w^{1}=0$ and by 
\[
\max\left\{ 2\mathrm{typ}_{z^{1}}(\rho),\,\max_{i\mbox{ such that }w_{i}\neq0}\left\{ \mathrm{typ}_{0}\left(h_{i}\right)\right\} \right\} 
\]
 otherwise.\end{ccorsp}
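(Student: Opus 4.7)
The plan is to combine Proposition \ref{prop:Global_pseudoconvexity-for-rho-plus-h} with Proposition \ref{prop:Local-rho+h-negative-finite-type}, adapting the latter to handle boundary points $(z^{1},w^{1})$ with $w^{1}\neq 0$. Pseudo-convexity is immediate: by hypothesis $\rho$ is pluri-subharmonic in a neighborhood of $\overline\Omega$, and by \hyporef{Hypothesis-II} each $h_{i}$ is pluri-subharmonic on $\mathbb{C}^{m_{i}}$, so $r(z,w)=\rho(z)+\sum_{i}h_{i}(w_{i})$ is pluri-subharmonic in a neighborhood of $\overline{\widetilde\Omega}$, which gives pseudo-convexity of $\widetilde\Omega$.

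For $(z^{1},w^{1})\in\partial\widetilde\Omega$ with $w^{1}=0$, the point $z^{1}$ lies on $\partial\Omega$. The pluri-subharmonicity of $\rho$ forces $i\partial\overline\partial\rho$ to be non-negative on the complex tangent to $\partial\Omega$ at $z^{1}$, which is exactly the hypothesis of Proposition \ref{prop:Local-rho+h-negative-finite-type}; that proposition yields the bound $\max\{\mathrm{typ}_{z^{1}}(\partial\Omega),\mathrm{typ}_{0}(h)\}$. Since the $h_{i}$ live in disjoint variable blocks and are non-negative, one has $\mathrm{ord}_{0}(h\circ\Psi)=\min_{i}\mathrm{ord}_{0}(h_{i}\circ\psi_{i})$ for every holomorphic curve $\Psi=(\psi_{1},\ldots,\psi_{p})$, and a short comparison of orders identifies $\mathrm{typ}_{0}(h)=\max_{i}\mathrm{typ}_{0}(h_{i})$, which matches the statement.

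For $w^{1}\neq 0$ the point $z^{1}$ is interior to $\Omega$ (since $\rho(z^{1})=-h(w^{1})<0$), so Proposition \ref{prop:Local-rho+h-negative-finite-type} does not apply to $\rho$ directly. Following the device used at the end of the proof of Corollary \ref{cor:Global_finite-type_rho+h}, I split the indices as $J=\{i:w_{i}^{1}\neq 0\}$ and $I=\{i:w_{i}^{1}=0\}$, write $w=(w',w'')$ accordingly, and introduce $\rho_{1}(z,w')=\rho(z)+\sum_{i\in J}h_{i}(w_{i})$. Since each $h_{i}$ for $i\in J$ is strictly pluri-subharmonic at $w_{i}^{1}\neq 0$ and $\rho$ is pluri-subharmonic, $\rho_{1}$ is pluri-subharmonic with $\rho_{1}(z^{1},w'^{1})=0$ and $\nabla\rho_{1}(z^{1},w'^{1})\neq 0$. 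Rewriting $\widetilde\Omega$ locally as $\{\rho_{1}(z,w')+\sum_{i\in I}h_{i}(w_{i})<0\}$ and applying Proposition \ref{prop:Local-rho+h-negative-finite-type} with $\rho_{1}$ in place of $\rho$ and $\sum_{i\in I}h_{i}$ in place of $h$, I obtain a bound of the form $\max\{\mathrm{typ}_{(z^{1},w'^{1})}(\{\rho_{1}=0\}),\max_{i\in I}\mathrm{typ}_{0}(h_{i})\}$.

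The main remaining obstacle is the estimate $\mathrm{typ}_{(z^{1},w'^{1})}(\{\rho_{1}=0\})\leq 2\,\mathrm{typ}_{z^{1}}(\rho)$. For a holomorphic curve $\Phi=(\varphi,\chi)$ through $(z^{1},w'^{1})$ one has $\rho_{1}\circ\Phi=(\rho\circ\varphi-\rho(z^{1}))+\sum_{i\in J}(h_{i}\circ\chi_{i}-h_{i}(w_{i}^{1}))$. Strict pluri-subharmonicity of each $h_{i}$ at $w_{i}^{1}\neq 0$ gives $\mathrm{ord}_{0}(h_{i}\circ\chi_{i}-h_{i}(w_{i}^{1}))\leq 2\,\mathrm{ord}_{0}(\chi_{i}-w_{i}^{1})$, and the finite-type hypothesis gives $\mathrm{ord}_{0}(\rho\circ\varphi-\rho(z^{1}))\leq\mathrm{typ}_{z^{1}}(\rho)\cdot\mathrm{ord}_{0}(\varphi-z^{1})$. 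Repeating the iterated-Laplacian argument from the proof of Proposition \ref{prop:Local-rho+h-negative-finite-type} with $\rho$ now playing the role of a pluri-subharmonic function of finite type at an interior point, and combining the two estimates above, produces the factor of $2$: it reflects the fact that the relevant oscillation of $\rho$ around the interior value $\rho(z^{1})$ is controlled quadratically through the Levi form where the vanishing of a boundary defining function is linear, so the function-type contribution is effectively doubled when passing from $\rho$ to the defining hypersurface $\{\rho_{1}=0\}$. Taking the maximum of the two contributions then yields the claimed bound.
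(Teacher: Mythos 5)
Your architecture is exactly the paper's: pseudo-convexity from pluri-subharmonicity of $r$; the case $w^{1}=0$ by a direct application of \propref{Local-rho+h-negative-finite-type} together with the identification $\mathrm{typ}_{0}(h)=\max_{i}\mathrm{typ}_{0}\left(h_{i}\right)$ (your non-cancellation remark for the non-negative blocks $h_{i}\circ\psi_{i}$ is the right justification); and, for $w^{1}\neq0$, the splitting of indices, the auxiliary function $\rho_{1}$, the reduction to the bound $\mathrm{typ}\left(\rho_{1}\right)\leq2\,\mathrm{typ}_{z^{1}}(\rho)$, and a second application of the proposition. This is the same reduction the paper performs (it treats first the case where all $w_{i}^{1}\neq0$ and then groups the coordinates as you do).

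The gap sits in the only step that carries real content: the factor $2$. The two inequalities you propose to ``combine'' --- $\mathrm{ord}_{0}\left(h_{i}\circ\chi_{i}-h_{i}\left(w_{i}^{1}\right)\right)\leq2\,\mathrm{ord}_{0}\left(\chi_{i}-w_{i}^{1}\right)$ and $\mathrm{ord}_{0}\left(\rho\circ\varphi-\rho\left(z^{1}\right)\right)\leq\mathrm{typ}_{z^{1}}(\rho)\cdot\mathrm{ord}_{0}\left(\varphi-z^{1}\right)$ --- are upper bounds on the orders of the two summands, and such bounds give no upper bound on the order of the sum: cancellation between $\rho\circ\varphi-\rho\left(z^{1}\right)$ and $\sum_{i}\left(h_{i}\circ\chi_{i}-h_{i}\left(w_{i}^{1}\right)\right)$ could a priori push $\mathrm{ord}_{0}\left(r\circ\Phi\right)$ arbitrarily high; this is precisely the difficulty the paper flags when it notes that the estimate ``is not a trivial consequence of the fact that $\rho$ and $h$ are decoupled''. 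The argument that is actually needed runs as follows: if all derivatives of $r\circ\Phi$ of order $\leq2k$ vanish at $0$, then $\triangle^{j}\left(r\circ\Phi\right)(0)=0$ for $j\leq k$; since both Levi-form contributions $\left\langle \partial\overline{\partial}\rho;\varphi',\overline{\varphi'}\right\rangle $ and $\left\langle \partial\overline{\partial}h;\psi',\overline{\psi'}\right\rangle $ are non-negative, Lemma 8.1 of \cite{CD08} forces their iterated Laplacians to vanish \emph{separately}, and strict pluri-subharmonicity of $h$ at $w^{1}$ then yields $\psi^{(j+1)}(0)=0$ for $0\leq j\leq k-1$; hence $\psi-\psi(0)$, $h\circ\psi-h\left(w^{1}\right)$ and therefore also $\rho\circ\varphi-\rho\left(z^{1}\right)$ all have order $\geq k+1$. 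With $l=\mathrm{ord}_{0}\left(r\circ\Phi\right)\in\left\{ 2k+1,2k+2\right\} $ one gets $l\leq2(k+1)\leq2\,\mathrm{ord}_{0}\left(\rho\circ\varphi-\rho\left(z^{1}\right)\right)$, and a final dichotomy is still required: if $\mathrm{ord}_{0}\left(\varphi-\varphi(0)\right)\geq k+1$ then $\mathrm{ord}_{0}\left(\Phi-\Phi(0)\right)\geq k+1$ and $\tau_{\Phi}\leq2$, while otherwise $\mathrm{ord}_{0}\left(\Phi-\Phi(0)\right)=\mathrm{ord}_{0}\left(\varphi-\varphi(0)\right)$ and only then does your inequality for $\rho\circ\varphi$ give $\tau_{\Phi}\leq2\,\mathrm{typ}_{z^{1}}(\rho)$. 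Your quadratic-versus-linear heuristic points at the right phenomenon, but without the separate-vanishing step (positivity preventing cancellation) and the dichotomy on which component of $\Phi$ realizes $\mathrm{ord}_{0}\left(\Phi-\Phi(0)\right)$, the factor $2$ is asserted rather than proved.
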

\begin{proof}
Let us first consider the case $p=1$. By \propref{Local-rho+h-negative-finite-type},
$\widetilde{\Omega}$ is of finite type $\max\left\{ {\mathrm{typ}}_{z^{1}}(\partial\Omega),{\mathrm{typ}}_{0}(h)\right\} $
at every point $\left(z^{1},0\right)\in\partial\widetilde{\Omega}$,
and we have to study the finiteness at points $\left(z^{1},w^{1}\right)\in\partial\widetilde{\Omega}$
such that $w^{1}\neq0$. Let $\Phi=\left(\varphi,\psi\right)$ be
a non-constant holomorphic function from the unit disc of the complex
plane into a neighborhood of $\left(z^{1},w^{1}\right)$ such that
$\Phi(0)=\left(z^{1},w^{1}\right)$. We have to estimate $\tau_{\Phi}=\frac{\mbox{ord}_{0}\left(r\circ\Phi\right)}{\mbox{ord}_{0}(\Phi-\Phi(0))}$,
and we can, of course, assume that $\mathrm{ord}_{0}\left(r\circ\Phi\right)\geq2$.

Let $k$ be a positive integer and let us assume that all the derivatives
of order $\leq2k$ of $r\circ\Phi$ vanish at the origin. Then
\[
\triangle\left(r\circ\Phi\right)(0)=\left\langle \partial\overline{\partial}\rho;\varphi',\overline{\varphi'}\right\rangle (0)+\left\langle \partial\overline{\partial}h;\psi',\overline{\psi'}\right\rangle (0)=0,
\]
and the hypothesis on $\rho$ and $h$ (pluri-subharmonicity of $\rho$
and strict pluri-subharmonicity of $h$) imply $\left\langle \partial\overline{\partial}\rho;\varphi',\overline{\varphi'}\right\rangle (0)=\left\langle \partial\overline{\partial}h;\psi',\overline{\psi'}\right\rangle (0)=0$
and the last equality implies $\psi'(0)=0$. Moreover Lemma 8.1 of
\cite{CD08} implies that, for $1\leq j\leq k-1$, $\triangle^{j}\left(\left\langle \partial\overline{\partial}h;\psi',\overline{\psi'}\right\rangle \right)(0)=0$,
and, by induction, a simple calculation shows that this implies $\psi^{(j+1)}(0)=0$,
$1\leq j\leq k-1$. Then all the derivatives of order $\leq k$ of
$h\circ\psi$ vanish at the origin, and, thus, the same is true for
the derivatives of $\rho\circ\varphi$.

This implies first that the order of $r\circ\Phi$
cannot be infinite at $0$. Assume it is $l$, and let $k$ be a positive
integer such that $l=2k+1$ or $l=2k+2$. In both cases, the orders
of $\rho\circ\varphi-\rho\left(z^{1}\right)$, $h\circ\varphi-h\left(w^{1}\right)$
and $\psi-\psi(0)$ are $\geq k+1$ and we have $\frac{\mathrm{ord}_{0}\left(r\circ\Phi\right)}{\mbox{ord}_{0}(\Phi-\Phi(0))}\leq\frac{2\mathrm{ord}_{0}\left(\rho\circ\varphi-\rho\left(z^{1}\right)\right)}{\mathrm{ord}_{0}(\Phi-\Phi(0))}$
which implies $\tau_{\Phi}\leq2$ if $\mbox{ord}_{0}\left(\varphi-\varphi(0)\right)\geq k+1$
and $\tau_{\Phi}\leq2\mathrm{typ}_{z^{1}}(\rho)$ if not.

The case $p\geq2$ follows easily. If, for all $i$, $1\leq i\leq p$,
$w_{i}^{1}\neq0$ then $h$ is strictly pluri-subharmonic at $w^{1}$
and the previous proof applies. Otherwise, to simplify notations,
we can assume that $w_{i}^{1}\neq0$ for $1\leq i\leq r<p$ and $w_{i}^{1}=0$
for $r+1\leq i\leq p$. Denoting $u=\left(w_{1},\ldots w_{r}\right)$,
$v=\left(w_{r+1},\ldots,w_{p}\right)$, $\rho_{1}\left(z,u\right)=\rho(z)+\sum_{i=1}^{r}h_{i}\left(w_{i}\right)$
and $h_{1}(v)=\sum_{i=r+1}^{p}h_{i}\left(w_{i}\right)$, the previous
case shows that $\rho_{1}$ is pluri-subharmonic and the type of $\rho_1$
at $\left(z^{1},u^{1}\right)$ is bounded by $2\mathrm{typ}_{z^{1}}(\rho)$.
The conclusion is obtained applying \propref{Local-rho+h-negative-finite-type}.
\end{proof}

\subsection{\label{sec:Geometric-separation}Geometric separation}

If the domain $\Omega$ is completely geometrically separated at a
boundary point $z_{0}$ (see \cite{CD08} for definition), we do not
know, in general, if $\widetilde{\Omega}$ has the same property at
the point $\left(z_{0},0\right)$. We can only prove the weaker following
result (for which we will not give a proof because we do not have any
application):
\begin{spprop}
\label{prop:Geometric-Separation-Omega-tilde-General}Assume that
$\Omega$ is of finite type at $z_{0}\in\partial\Omega$ and that
$h(w)=\sum\left|w_{i}\right|^{2q_{i}}$, $w_{i}\in\mathbb{C}$. Then
for all Diederich-Forn\ae ss defining function $\rho$ of $\Omega$
of the form $\rho=\sigma e^{-L\left|z\right|^{2}}$ (see \cite{DF77-Strict-Psh-Exhau-Func-Inventiones})
with $L$ large enough (depending only on $\Omega$), we have:

if there exist a neighborhood $V$ of $z_{0}$, $K>0$ and a finite
dimensional vector space $E_{0}$ of $\left(1,0\right)$ vector fields
tangent to $\rho$ in $V$ (i.e. $L\rho\equiv0$ for $L\in E_{0}$)
such that, at any point of $V\cap\Omega$ and for
any $\delta>0$ there exists a $\left(K,\delta\right)$-extremal basis
for $\rho$ whose elements belong to $E_{0}$, then $\widetilde{\Omega}$
is geometrically separated at $\left(z_{0},0\right)$.
\end{spprop}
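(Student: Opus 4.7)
The plan is to lift the given $(K,\delta)$-extremal basis for $\rho$ at points near $z_{0}$ to a $(K',\delta)$-extremal basis for $r(z,w)=\rho(z)+h(w)$ at points near $(z_{0},0)$, by adjoining $m$ vector fields in the $w$-directions, and then to verify that the conditions defining geometric separation in \cite{CD08} hold for $\widetilde{\Omega}$.

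First I would work in a product neighborhood $V\times W$ of $(z_{0},0)$. Given a $(K,\delta)$-extremal basis $(L_{1},\ldots,L_{n})$ for $\rho$ at $p\in V\cap\Omega$ with each $L_{j}\in E_{0}$, I extend each $L_{j}$ to a $(1,0)$-field $\widetilde{L}_{j}$ on $V\times W$ that is independent of $w$. Since $L_{j}\rho\equiv 0$ in $V$ and $\widetilde{L}_{j}$ does not differentiate $h$, we still have $\widetilde{L}_{j}r\equiv 0$. For each $w$-direction $i=1,\ldots,m$, I set
\[
M_{i}=\frac{\partial}{\partial w_{i}}-\lambda_{i}(z,w)\,N,
\]
where $N$ is the $(1,0)$ complex normal field to $r$ (normalized against $\overline{\partial}r$) and $\lambda_{i}$ is the unique smooth function making $M_{i}r\equiv 0$. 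The key observation is that $\partial h/\partial w_{i}=q_{i}\left|w_{i}\right|^{2q_{i}-2}\overline{w}_{i}$ vanishes to order $2q_{i}-1$ at $w_{i}=0$, so the $\lambda_{i}$ are small near $\{w=0\}$ and $M_{i}$ is a controlled perturbation of $\partial/\partial w_{i}$ there.

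Second, I would check that $\bigl(\widetilde{L}_{1},\ldots,\widetilde{L}_{n},M_{1},\ldots,M_{m}\bigr)$ is a $(K',\delta)$-extremal basis for $r$, with weight $\delta^{1/(2q_{i})}$ attached to $M_{i}$, this being the natural scale dictated by the order $2q_{i}$ of $h_{i}$ at $0$. The sharp bounds on $\langle\partial\overline{\partial}r;\widetilde{L}_{j},\overline{\widetilde{L}_{k}}\rangle$ are inherited from the extremality of $(L_{1},\ldots,L_{n})$ for $\rho$, since $\partial\overline{\partial}r=\partial\overline{\partial}\rho+\partial\overline{\partial}h$ and $\widetilde{L}_{j}$ kills $h$; the diagonal bounds on $\langle\partial\overline{\partial}r;M_{i},\overline{M_{j}}\rangle$ follow from the explicit decoupled form of $\partial\overline{\partial}h$; and the mixed bounds $\langle\partial\overline{\partial}r;\widetilde{L}_{j},\overline{M_{i}}\rangle$ follow from the smallness of the corrections $\lambda_{i}N$. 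For the structural data required by geometric separation in the sense of \cite{CD08}, the $\widetilde{L}_{j}$ lie in the image of $E_{0}$ under the trivial extension in $w$, and the $M_{i}$ are built from the constant fields $\partial/\partial w_{i}$ together with $N$ and the polynomial coefficients of $\overline{\partial}h$, which keeps the ambient family of allowed $(1,0)$-fields tangent to $r$ under control.

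The main obstacle will be verifying the bracket and iterated-commutator estimates that are part of the $(K,\delta)$-extremal basis definition of \cite{CD08}, not merely the Levi form bounds. The $\widetilde{L}_{j}$ involve only $z$-derivatives and the $M_{i}$ are essentially $w$-derivatives, but the normal correction $\lambda_{i}N$ in $M_{i}$ couples the two systems, so brackets $[\widetilde{L}_{j},M_{i}]$ pick up terms containing $\overline{\partial}\rho$ paired with $\overline{\partial}h$. The Diederich--Forn\ae ss form $\rho=\sigma e^{-L\left|z\right|^{2}}$ with $L$ chosen large is precisely what is needed to produce enough strict pluri-subharmonicity in directions transverse to the complex tangent space to absorb these error terms at each scale $\delta$, so that the sharp extremality inequalities persist after the lift and the geometric separation of $\widetilde{\Omega}$ at $(z_{0},0)$ follows.
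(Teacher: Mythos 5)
You should first be aware that the paper itself gives no proof of this proposition: the authors state explicitly that they ``will not give a proof because we do not have any application.'' So there is no author argument to compare yours against, and your proposal has to stand on its own. Its skeleton is the natural one and matches what the authors do in the situations where they \emph{do} give proofs (the locally diagonalizable case in $\mathbb{C}^{2}$ and the convex case): extend the tangential fields $L_{j}$ trivially in $w$, adjoin fields $M_{i}=\partial/\partial w_{i}-\lambda_{i}N$ with $\lambda_{i}$ proportional to $\partial h_{i}/\partial w_{i}$, which vanishes to order $2q_{i}-1$, and attach the weight $\delta^{-1/q_{i}}$ (side length $\delta^{1/(2q_{i})}$) to the new directions. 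That part is sound, and you are also right that the hypothesis on interior extremal bases in $V\cap\Omega$ (not just on $\partial\Omega$) is what makes the lift even conceivable, since at a point $(p,w)$ with $w\neq0$ the relevant level set of $\rho$ through $p$ is an interior one.

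The genuine gap is that your last paragraph names the actual content of the proposition and then asserts it instead of proving it. Being a $\left(K,\delta\right)$-extremal basis and being geometrically separated in the sense of \cite{CD08} require far more than the Levi-form bounds you verify: one needs the weights $F\left(\cdot,p,\delta\right)$ to control all derivatives of the Levi-form coefficients $c_{ij}=\left[\,\cdot\,,\overline{\,\cdot\,}\right]\left(\partial r\right)$ by lists of basis fields, the structural bracket conditions, and uniformity in $p$ and $\delta$. The dangerous terms are exactly the ones you identify, of the form $\frac{\partial h_{i}}{\partial w_{i}}\left[\widetilde{L}_{j},\overline{N}\right]\left(\partial r\right)$ and their derivatives, and these involve the Levi form of $\rho$ in mixed tangential--normal directions, which is \emph{not} controlled by the tangential extremal-basis hypothesis. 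Saying that the choice $\rho=\sigma e^{-L\left|z\right|^{2}}$ with $L$ large ``is precisely what is needed to absorb these error terms'' is not an argument: you would have to quantify the resulting strict pluri-subharmonicity of $\rho$ inside $\Omega$ (a lower bound of the form $i\partial\overline{\partial}\rho\geq\varepsilon\left(\left|\rho\right|\right)i\partial\overline{\partial}\left|z\right|^{2}$ with explicit dependence on $\left|\rho\right|\lesssim\delta$) and show that it dominates $\left|\partial h_{i}/\partial w_{i}\right|\simeq\left|w_{i}\right|^{2q_{i}-1}\lesssim\delta^{(2q_{i}-1)/(2q_{i})}$ times the mixed Levi coefficients at every scale $\delta$, for every derivative that the \cite{CD08} definition requires. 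You would also need to exhibit the finite-dimensional space of tangent $\left(1,0\right)$-fields for $\widetilde{\Omega}$ demanded by the definition of geometric separation and check that your $M_{i}$, whose coefficients $\lambda_{i}$ depend on both $z$ and $w$, actually live in such a space. Until those estimates are carried out, the proposal is a plausible programme rather than a proof.
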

Note that the hypothesis in this proposition is stronger than the
simple fact that $\partial\Omega$ is geometrically separated at $z_{0}$:
the existence of extremal basis is assumed not only on the points
of $\partial\Omega\cap V$ but on all $\Omega\cap V$ (condition which
depends not only on $\Omega$ but also on the choice of $\rho$).
Unfortunately, if we add the hypothesis that all the level set of
$\rho$ are ``completely geometrically separated'' in $\Omega\cap V$
we can not prove, in general, that $\widetilde{\Omega}$
has the same property at $\left(z_{0},0\right)$. The only general
result we have is when $\Omega$ is in $\mathbb{C}^{2}$ (see \remref{geom-sep-convex-domains}):
\begin{stthm}
\label{thm:C-2-rho-plus-h-Loc-Diag}Assume $\Omega$ is pseudo-convex
of finite type in $\mathbb{C}^{2}$. Assume that $\rho=\rho_{s}$
with $\rho_{s}$ satisfying (\ref{eq:hypothesis-defining-func}) as
stated in \secref{Diederich-Fornaess-def-func} and that $h$ satisfies
\hyporef{Hypothesis-III} of \secref{Hypothesis_on_function_h}. Then
the domain 
\[
\widetilde{\Omega}=\left\{ \left(z,w\right)=\left(z,w_{1},\ldots,w_{m}\right)\in\mathbb{C}^{2}\times\mathbb{C}^{m}\mbox{ s. t. }r\left(z,w\right)=\rho(z)+\sum_{i=1}^{m}h_{i}\left(w_{i}\right)<0\right\} 
\]
 is pseudo-convex of finite type and has a Levi form which is locally
diagonalizable at every point of its boundary. In particular $\widetilde{\Omega}$
is completely geometrically separated (c.f. \cite{CD08}).\end{stthm}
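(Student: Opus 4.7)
Pseudo-convexity of $\widetilde{\Omega}$ and its finite-type character follow immediately from \propref{Global_pseudoconvexity-for-rho-plus-h} and \corref{Global_finite-type_rho+h}(\ref{2-Cor-global-rho-plus-h-finite-type}), since \hyporef{Hypothesis-III} entails \hyporef{Hypothesis-II} and (for $s'$ taken small enough) \hyporef{Hypothesis-I}. The substantive content is the local diagonalizability of the Levi form of $r = \rho(z) + \sum_i h_i(w_i)$ at every boundary point; the asserted complete geometric separation then follows by \cite{CD08}.

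My plan is to construct, near each $P = (z^0, w^0) \in \partial\widetilde{\Omega}$, a smooth frame $(L_0, L_1, \ldots, L_m)$ of tangent $(1,0)$-fields in which $\mathcal{L}_r$ is diagonal. When $w_i^0 \neq 0$ for every $i$, \corref{Global_finite-type_rho+h}(\ref{2-Cor-global-rho-plus-h-finite-type}) gives strict pseudo-convexity at $P$, and ordinary Hermitian Gram--Schmidt on any smooth tangent frame suffices. The substantive case is when some $w_i^0 = 0$; after relabelling, $w_1^0 = \cdots = w_k^0 = 0$ and $w_j^0 \neq 0$ for $j > k$. Near $z^0 \in \mathbb{C}^2$ I choose a smooth tangent $(1,0)$-field $L_0$ to $\rho$ (unique up to scalar since the complex tangent line to $\partial\Omega$ is one-dimensional) together with a transverse $(1,0)$-field $N$ satisfying $N(\rho) = 1$ and adjusted by a multiple of $L_0$ so that $\mathcal{L}_\rho(L_0, \overline{N}) = 0$. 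For $i = 1, \ldots, m$ I set $L_i = \partial/\partial w_i - \epsilon_i N$ with $\epsilon_i = \partial h_i/\partial w_i$; these are tangent to $r$ by the decoupled structure of $h$.

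Using that $r$ is decoupled in $z$ and $w$ and among the $w_i$'s, a direct computation gives
\begin{equation*}
\mathcal{L}_r(L_0, \overline{L_i}) = -\overline{\epsilon_i}\,\mathcal{L}_\rho(L_0, \overline{N}) = 0, \qquad \mathcal{L}_r(L_i, \overline{L_j}) = \delta_{ij}\mu_i + \epsilon_i\overline{\epsilon_j}\,P
\end{equation*}
for $i, j \geq 1$, where $\mu_i = \partial^2 h_i/\partial w_i\partial\overline{w}_i$ and $P = \mathcal{L}_\rho(N, \overline{N})$. \hyporef{Hypothesis-III} enters through the factorization $\epsilon_i = \alpha_i\mu_i$ with $\alpha_i(w_i) \to 0$: the off-diagonal $w$-entries become $\alpha_i\overline{\alpha_j}\mu_i\mu_j\,P$ and the diagonal ones $\mu_i(1 + |\alpha_i|^2\mu_i P)$. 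The Gram--Schmidt coefficient needed to eliminate the $(j, i)$-entry from $L_j$ reduces, after cancellation of the shared factor $\mu_i$, to $\alpha_j\overline{\alpha_i}\mu_j P/(1 + |\alpha_i|^2\mu_i P)$, and this extends smoothly across $\{w = 0\}$ because $\alpha_j(0) = 0$. Iterating over $i$ then diagonalizes the $w$-block in a full neighborhood of $P$.

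The main obstacle is the parallel orthogonalization on the $z$-side: producing an $N$ with $\mathcal{L}_\rho(L_0, \overline{N}) \equiv 0$ requires dividing by $\mathcal{L}_\rho(L_0, \overline{L_0})$, which degenerates at weakly pseudo-convex points of $\partial\Omega$. The hypothesis $\dim\Omega = 2$ is essential here: the complex tangent line being one-dimensional, the finite type of $\partial\Omega$ combined with the Diederich--Forn\ae ss estimate $i\partial\overline{\partial}\rho \geq i\frac{1-s}{\rho}\partial\rho\wedge\overline{\partial}\rho$ keeps the relevant vanishing orders controlled, allowing a smooth selection of $N$ through the degeneracy. Combining the $z$- and $w$-side diagonalizations then produces the required Levi-diagonal frame near every boundary point of $\widetilde{\Omega}$.
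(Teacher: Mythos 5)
Your reduction of pseudo-convexity and finite type to Proposition \ref{prop:Global_pseudoconvexity-for-rho-plus-h} and Corollary \ref{cor:Global_finite-type_rho+h} is fine, and your frame $Z_{i}=\frac{\partial}{\partial w_{i}}-\frac{\partial h_{i}}{\partial w_{i}}N$ together with the use of the factorization $\frac{\partial h_{i}}{\partial w_{i}}=\alpha_{i}\frac{\partial^{2}h_{i}}{\partial w_{i}\partial\overline{w}_{i}}$ to make the Gram--Schmidt coefficients in the $w$-block smooth is exactly the mechanism of the paper's proof. But there is a genuine gap at the step you yourself flag as ``the main obstacle'': you propose to normalize $N$ so that $\left[L_{0},\overline{N}\right]\left(\partial\rho\right)\equiv0$, which forces a division by $\left[L_{0},\overline{L_{0}}\right]\left(\partial\rho\right)$, and you assert that finite type plus the Diederich--Forn\ae ss inequality ``allows a smooth selection of $N$ through the degeneracy.'' This is unjustified and in general false: at a weakly pseudo-convex boundary point of a finite type domain in $\mathbb{C}^{2}$ the diagonal Levi coefficient $\left[L_{0},\overline{L_{0}}\right]\left(\partial\rho\right)$ vanishes (to some order $2k-2$), while the cross term $\left[L_{0},\overline{N}\right]\left(\partial\rho\right)$ has no reason to be divisible by it, so the required quotient need not even be bounded. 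No such normalization of $N$ exists in general, and the paper never attempts one.

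The correct move --- and the one the paper makes --- is to leave $N$ alone and instead correct the $z$-tangent field: set $L_{1}=L-\sum_{k}b_{k}W_{k}$ and solve $\left[L_{1},\overline{W_{i}}\right]\left(\partial r\right)=0$ for $b_{i}$. Here one divides by $\left[W_{i},\overline{W_{i}}\right]\left(\partial r\right)$, which by the induction is $\gamma_{i}\frac{\partial^{2}h_{i}}{\partial w_{i}\partial\overline{w}_{i}}$ with $\left|\gamma_{i}\right|\geq\nicefrac{1}{2}$, while the numerator $\left[L,\overline{W_{i}}\right]\left(\partial r\right)$ carries the factor $\frac{\partial h_{i}}{\partial\overline{w}_{i}}=\overline{\alpha_{i}}\frac{\partial^{2}h_{i}}{\partial w_{i}\partial\overline{w}_{i}}$; the common factor cancels and $b_{i}$ is smooth by Condition \ref{hypo:Hypothesis-III}. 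This is available inside your own framework (your computed cross term $-\overline{\epsilon_{i}}\left[L_{0},\overline{N}\right]\left(\partial\rho\right)$ divided by your diagonal entry $\mu_{i}\left(1+\left|\alpha_{i}\right|^{2}\mu_{i}P\right)$ is $-\overline{\alpha_{i}}\left[L_{0},\overline{N}\right]\left(\partial\rho\right)/\left(1+\left|\alpha_{i}\right|^{2}\mu_{i}P\right)$, manifestly smooth), so the obstacle you could not resolve simply does not arise on the correct route. A secondary point: your uniform formula for the elimination coefficients uses $\alpha_{j}$ for every $j$, but Condition \ref{hypo:Hypothesis-III} only provides $\alpha_{j}$ near $w_{j}=0$; for the indices with $w_{j}^{0}\neq0$ the paper instead absorbs those coordinates into $\rho^{1}=\rho+\sum h_{j}$, whose level sets are strictly pseudo-convex by Proposition \ref{prop:Local-pseudoconvexity-for-rho-plus-h}, and diagonalizes that block by ordinary Gram--Schmidt before running the induction on the remaining variables.
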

\begin{proof}
Let $\left(z^{0},w^{0}\right)$ be a boundary point of $\widetilde{\Omega}$.
If $w_{i}^{0}\neq0$ for all $i$, then, by (\ref{2-Cor-global-rho-plus-h-finite-type})
of \corref{Global_finite-type_rho+h} following \propref{Local-rho+h-negative-finite-type},
$\partial\widetilde{\Omega}$
is strictly pseudo-convex at $\left(z^{0},w^{0}\right)$. Thus we
have only two cases to consider:
\begin{enumerate}
\item $w^{0}=0$;
\item \label{Case2-proof-Prop-rho-plus-h-Loc-Diag}there exist $i$ and
$j$ such that $w_{i}^{0}=0$ and $w_{j}^{0}\neq0$.
\end{enumerate}
Let us consider the first case. Denote by $L$ (resp. $N$) a non-vanishing vector
field ``complex tangent'' (resp. normal) to $\rho$ in a neighborhood
of $z^{0}$ (i.e. $L$ is of type $\left(1,0\right)$ and $L\rho\equiv0$).
We assume that $N$ is chosen so that $N\rho\equiv1$
in that neighborhood. Without changing the notation, we will consider
these vector fields defined in a neighborhood of $\left(z^{0},0\right)$
so that $Lr\equiv0$ and $Nr\equiv1$ in this neighborhood.
Let us define $m$ vector fields, $Z_{i}$, ``complex tangent'' to $r$,
in a neighborhood of $\left(z^{0},0\right)$ by
\[
Z_{i}=\frac{\partial}{\partial w_{i}}-\frac{\partial h_{i}}{\partial w_{i}}N,
\]
and then $m$ new vector fields, $W_{i}$, also ``complex tangent'' to
$r$, by
\begin{eqnarray*}
W_{1} & = & Z_{1},\\
W_{k+1} & = & Z_{k+1}-\sum_{j=1}^{k}a_{k+1}^{j}W_{j}\mbox{ for }k\geq2.
\end{eqnarray*}

We now show, by induction over $k$, that it is possible to choose
the coefficients $a_{k}^{j}$ so that the coefficient of the Levi
form of $r$, $\left[W_{k},\overline{W_{k'}}\right]\left(\partial r\right)$,
vanishes identically on the neighborhood of $\left(z^{0},0\right)$.
To simplify notations, in this proof, the character $\divideontimes$
will denote a $\mathcal{C}^{\infty}$ function in a neighborhood of
the origin.

Suppose that the vector fields $W_{i}$, $1\leq i\leq k$, have been
constructed with coefficients $a_{i}^{j}$, $2\leq i\leq k$, $1\leq j\leq i-1$
satisfying the two following properties:
\begin{enumerate}
\item \label{1-Claim-Prop_Loc-Diag}$a_{i}^{j}=\divideontimes\frac{\partial h_{i}}{\partial w_{i}}$,
\item \label{2-Claim-Prop-Loc-Diag}$\left[W_{i},\overline{W_{i}}\right]\left(\partial r\right)=\gamma_{i}\frac{\partial^{2}h_{i}}{\partial w_{i}\partial\overline{w_{i}}}$,
where $\gamma_{i}$ is a $\mathcal{C}^{\infty}$ real function in
a neighborhood of the origin of modulus greater than $\nicefrac{1}{2}$,
\end{enumerate}
\noindent and let us prove that $W_{k+1}$ can be constructed, so that the
coefficients $a_{k+1}^{j}$, $1\leq j\leq k$ satisfy the above
conditions. Note that the hypotheses made on $h_{i}$ imply first
that (\ref{2-Claim-Prop-Loc-Diag}) follows from (\ref{1-Claim-Prop_Loc-Diag})
because (\ref{1-Claim-Prop_Loc-Diag}) implies
\[
\left[W_{i},\overline{W_{i}}\right]\left(\partial r\right)=\frac{\partial^{2}h_{i}}{\partial w_{i}\partial\overline{w_{i}}}+\left|\frac{\partial h_{i}}{\partial w_{i}}\right|^{2}\left[N,\overline{N}\right]\left(\partial\rho\right)+\divideontimes\frac{\partial h_{i}}{\partial w_{i}}+\divideontimes\frac{\partial h_{i}}{\partial\overline{w_{i}}}.
\]
Note also that $W_{1}$ satisfy trivially (\ref{2-Claim-Prop-Loc-Diag}).

Thus we construct $W_{k+1}$ with coefficients $a_{k+1}^{j}$ satisfying
(\ref{1-Claim-Prop_Loc-Diag}).

For $j\leq k$, by induction, we have
\[
\left[W_{k+1},\overline{W_{j}}\right]\left(\partial r\right)=-\frac{\partial h_{j}}{\partial\overline{w_{j}}}\left[N,\overline{W_{j}}\right]\left(\partial r\right)-a_{k+1}^{j}\left[W_{j},\overline{W_{j}}\right]\left(\partial r\right)
\]
with
\begin{eqnarray*}
\left[N,\overline{W_{j}}\right]\left(\partial r\right) & = & -\frac{\partial h_{j}}{\partial\overline{w_{j}}}\left[N,\overline{N}\right]\left(\partial\rho\right)-\sum_{1\leq l<j}\overline{a_{j}^{l}}\left[N,\overline{W_{l}}\right]\left(\partial r\right)\\
 & = & \divideontimes\frac{\partial h_{j}}{\partial\overline{w_{j}}},
\end{eqnarray*}
if $2\leq j\leq k$, and
\[
\left[N,\overline{W_{1}}\right]\left(\partial r\right)=\divideontimes\frac{\partial h_{1}}{\partial\overline{w_{1}}}.
\]
This shows, by \hyporef{Hypothesis-III} of $h$, that the $a_{k+1}^{j}$
can be defined satisfying (\ref{1-Claim-Prop_Loc-Diag}) and such
that $\left[W_{k+1},\overline{W_{j}}\right]\left(\partial r\right)\equiv0$.

To finish the proof of the first case, we modify the vector field
$L$ replacing it by $L_{1}=L-\sum_{k=1}^{m}b_{k}W_{k}$ choosing
the $b_{k}$ so that the basis $\left(L_{1},W_{1},\ldots,W_{m}\right)$
diagonalizes the Levi form of $r$ in a neighborhood of $\left(z^{0},0\right)$
which means, now, $\left[L_{1},\overline{W_{i}}\right]\left(\partial r\right)\equiv0$
in that neighborhood:
\begin{eqnarray*}
\left[L_{1},\overline{W_{i}}\right]\left(\partial r\right) & = & \left[L,\overline{W_{i}}\right]\left(\partial r\right)-b_{i}\left[W_{i},\overline{W_{i}}\right]\left(\partial r\right)\\
 & = & \begin{cases}
-\frac{\partial h_{i}}{\partial\overline{w_{i}}}\left[L,\overline{N}\right]\left(\partial r\right)-\sum_{1\leq l<i}\overline{a_{i}^{l}}\left[L,\overline{W_{i}}\right]\left(\partial r\right)-b_{i}\left[W_{i},\overline{W_{i}}\right]\left(\partial r\right) & \mbox{if }i\geq2\\
-\frac{\partial h_{1}}{\partial\overline{w_{1}}}\left[L,\overline{N}\right]\left(\partial r\right)-b_{i}\left[W_{i},\overline{W_{i}}\right]\left(\partial r\right) & \mbox{if }i=1
\end{cases}\\
 & = & \divideontimes\frac{\partial h_{i}}{\partial\overline{w_{i}}}-b_{i}\left[W_{i},\overline{W_{i}}\right]\left(\partial r\right),
\end{eqnarray*}
and, by (\ref{2-Claim-Prop-Loc-Diag}), $b_{i}$ can be chosen $\mathcal{C}^{\infty}$
in a neighborhood of $\left(z^{0},0\right)$.

Let us now consider the second case (\ref{Case2-proof-Prop-rho-plus-h-Loc-Diag}).
To simplify the notations, we assume that $w_{i}^{0}\neq0$ for $1\leq i\leq m_{0}<m$
and $w_{i}^{0}=0$ for $m_{0}+1\leq i\leq m$. We denote $w=\left(w',w''\right)$,
with $w'=\left(w_{1},\ldots,w_{m_{0}}\right)$, $w''=\left(w_{m_{0}+1},\ldots,w_{m}\right)$,
$\rho^{1}\left(z,w'\right)=\rho(z)+\sum_{i=1}^{m_{0}}h_{i}\left(w_{i}\right)$
and $h^{1}\left(w''\right)=\sum_{i=m_{0}+1}^{m}h_{i}\left(w_{i}\right)$.

By \propref{Local-pseudoconvexity-for-rho-plus-h}, in a neighborhood
of $\left(z^{0},w'^{0}\right)$, the hypersurface $\left\{ \rho^{1}=0\right\} $
is strictly pseudo-convex. Then, reducing eventually the neighborhood,
there exists a basis of vector fields $\left(L_{1},\ldots,L_{m_{0}-1}\right)$
``complex tangent'' to $\rho^{1}$ (i.e. $L_{i}\left(\rho^{1}\right)\equiv0$)
which diagonalizes the Levi form of
$\rho^{1}$ in that neighborhood. Let us denote by $N$ the complex
normal vector field to $\rho^{1}$ in that neighborhood such that
$N\rho^{1}\equiv1$ (note that, reducing the neighborhood if necessary,
we can assume that the gradient of $\rho^{1}$ does not vanishes in
the neighborhood). We now consider the following $m-m_{0}$ vector
fields (which are ``complex tangent'' to $\rho^{1}$ in the neighborhood)
\begin{eqnarray*}
W_{1} & = & \frac{\partial}{\partial w_{m_{0}+1}}-\frac{\partial h_{m_{0}+1}}{\partial w_{m_{0}+1}}N-\sum_{i=1}^{m_{0}-1}a_{1}^{i}L_{i},\\
W_{j} & = & \frac{\partial}{\partial w_{m_{0}+j}}-\frac{\partial h_{m_{0}+j}}{\partial w_{m_{0}+j}}N-\sum_{i=1}^{m_{0}-1}a_{j}^{i}L_{i}-\sum_{l=1}^{j-1}b_{j}^{l}W_{l},\mbox{ for }j\geq2.
\end{eqnarray*}

To finish the proof of the theorem, we show that it is possible to
choose the coefficients $a_{j}^{i}$ and $b_{j}^{l}$ $\mathcal{C}^{\infty}$
in a neighborhood of $\left(z^{0},w{}^{0}\right)$ so that the basis
of vector field $\left(L_{1},\ldots,L_{m_{0}-1},W_{1},\ldots,W_{m-m_{0}}\right)$
diagonalizes the Levi form of $r$ in that neighborhood. We do this
using an induction argument similar to the one used in the first case:
assume that the vector fields $W_{j}$, $1\leq j\leq k$ have been
constructed and that their coefficients satisfy
\begin{enumerate}
\item \label{2-1-Claim-Prop_Loc-Diag}$a_{j}^{i}\mbox{ and }b_{j}^{l}=\divideontimes\frac{\partial h_{m_{0}+j}}{\partial w_{m_{0}+j}}$,
where $\divideontimes$ is a $\mathcal{C}^{\infty}$ function in a
neighborhood of $\left(z^{0},w{}^{0}\right)$,
\item \label{2-2-Claim-Prop-Loc-Diag-}$\left[W_{j},\overline{W_{j}}\right]\left(\partial r\right)=\gamma_j\frac{\partial^{2}h_{m_{0}+j}}{\partial w_{m_{0}+j}\partial\overline{w_{m_{0}+j}}}$,
where $\gamma_j$ is a $\mathcal{C}^{\infty}$ real function in a neighborhood
of $\left(z^{0},w{}^{0}\right)$ greater, in modulus, than $\nicefrac{1}{2}$.
\end{enumerate}
As for the first case, note that (\ref{2-2-Claim-Prop-Loc-Diag-})
follows (\ref{2-1-Claim-Prop_Loc-Diag}). Then, for $1\leq j\leq m_{0}-1$,
\[
\left[W_{k+1},\overline{L_{j}}\right]\left(\partial r\right)=-\frac{\partial h_{m_{0}+k+1}}{\partial w_{m_{0}+k+1}}\left[N,\overline{L_{j}}\right]\left(\partial r\right)-a_{k+1}^{j}\left[L_{j},\overline{L_{j}}\right]\left(\partial r\right),
\]
and, for $j\leq k$,
\[
\left[W_{k+1},\overline{W_{j}}\right]\left(\partial r\right)=-\frac{\partial h_{m_{0}+k+1}}{\partial w_{m_{0}+k+1}}\left[N,\overline{W_{j}}\right]\left(\partial r\right)-b_{k+1}^{j}\left[W_{j},\overline{W_{j}}\right]\left(\partial r\right),
\]
and the results follow, noting that $\left[L_{j},\overline{L_{j}}\right]\left(\partial r\right)$
is bounded from below by a strictly positive constant in a neighborhood
of $\left(z^{0},w{}^{0}\right)$, and, as in the first case, that
$\left[N,\overline{W_{j}}\right]\left(\partial r\right)=\divideontimes\frac{\partial h_{m_{0}+j}}{\partial\overline{w_{m_{0}+j}}}$,
with $\divideontimes$ is a $\mathcal{C}^{\infty}$ function in a
neighborhood of $\left(z^{0},w{}^{0}\right)$.\end{proof}
\begin{example*}
If $h\left(w\right)=\sum h_{i}\left(w_{i}\right)$, $w_{i}\in\mathbb{C}$,
where each function $h_{i}$ is a positive radial analytic function vanishing
at the origin, then the hypothesis of the theorem are verified.
\end{example*}
The proof of the second case, shows that \thmref{C-2-rho-plus-h-Loc-Diag}
is also valid if $\Omega$ is a smooth strictly pseudo-convex domain
in $\mathbb{C}^{n}$ (of any dimension). Moreover, applying first
the method of the second case and then the one of the first case,
this is also true when the rank of the Levi form of $\rho$ is $\geq n-2$.
Thus:
\begin{stthm}
Assume that $\rho=\rho_{s}$ with $\rho_{s}$ satisfying (\ref{eq:hypothesis-defining-func})
as stated in \secref{Diederich-Fornaess-def-func} and that $h$ satisfy
\hyporef{Hypothesis-III} of \secref{Hypothesis_on_function_h}. If
the rank of the Levi form of $\rho$ is $\geq n-2$, then $\widetilde{\Omega}$
is locally diagonalizable at every point of its boundary.\end{stthm}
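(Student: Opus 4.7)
The plan is to adapt the two-case argument in the proof of \thmref{C-2-rho-plus-h-Loc-Diag} to the higher-dimensional setting, using the rank hypothesis to confine Levi-degeneracy to at most one direction that is then handled exactly as in the $\mathbb{C}^{2}$-case. Fix a boundary point $\left(z^{0},w^{0}\right)\in\partial\widetilde{\Omega}$; if every $w_{i}^{0}\neq0$ the strict pseudo-convexity provided by \propref{Global_pseudoconvexity-for-rho-plus-h}\,(\ref{2-of-Global-pseudoconvexity-rho-plus-h}) makes local diagonalization trivial. Otherwise, after reordering, we may assume $w_{i}^{0}\neq0$ for $1\leq i\leq m_{0}$ and $w_{i}^{0}=0$ for $m_{0}<i\leq m$ with $0\leq m_{0}<m$; split $w=\left(w',w''\right)$ and introduce $\rho^{1}\left(z,w'\right)=\rho(z)+\sum_{i=1}^{m_{0}}h_{i}\left(w_{i}\right)$ and $h^{1}\left(w''\right)=\sum_{i=m_{0}+1}^{m}h_{i}\left(w_{i}\right)$, so that $r=\rho^{1}+h^{1}$ in a neighborhood of $\left(z^{0},w^{0}\right)$.

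Because each $h_{i}$ is strictly pluri-subharmonic at $w_{i}^{0}\neq0$ (by \hyporef{Hypothesis-II}), the Hermitian form $i\partial\overline{\partial}\rho^{1}$ decomposes as $i\partial\overline{\partial}\rho$ plus a positive-definite block in the $w'$-directions; any Levi-null direction of $\rho^{1}$ on its complex tangent space (of complex dimension $n+m_{0}-1$) must therefore have vanishing $w'$-component and restrict to a null direction of $\rho$ at $z^{0}$. The hypothesis $\mathrm{rank}\geq n-2$ then forces $\rho^{1}$ to have at most one Levi-null direction in a neighborhood of $\left(z^{0},w'^{0}\right)$. A smooth Gram-Schmidt, prioritizing the non-degenerate directions (which stay non-degenerate by continuity of the lower bound on rank), yields a basis $\left(L_{0},L_{1},\ldots,L_{n+m_{0}-2}\right)$ of complex tangent vector fields to $\rho^{1}$ diagonalizing its Levi form, with $\left[L_{i},\overline{L_{i}}\right]\left(\partial\rho^{1}\right)$ bounded below by a positive constant for $i\geq1$, and $L_{0}$ ``$z$-valued''. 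Let $N$ be the complex normal with $N\rho^{1}\equiv1$; extending the $L_{j}$ and $N$ trivially in $w''$, they become complex tangent vector fields to $r$ with $Nr\equiv1$.

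I next repeat verbatim the \emph{second-case construction} of \thmref{C-2-rho-plus-h-Loc-Diag}, defining inductively for $1\leq j\leq m-m_{0}$
\[
W_{j}=\frac{\partial}{\partial w_{m_{0}+j}}-\frac{\partial h_{m_{0}+j}}{\partial w_{m_{0}+j}}N-\sum_{i=1}^{n+m_{0}-2}a_{j}^{i}L_{i}-\sum_{l<j}b_{j}^{l}W_{l},
\]
with coefficients of the form $\divideontimes\partial h_{m_{0}+j}/\partial w_{m_{0}+j}$. The same inductive argument, using \hyporef{Hypothesis-III} (i.e.\ $\partial h_{i}/\partial w_{i}=\alpha_{i}\partial^{2}h_{i}/\partial w_{i}\partial\overline{w_{i}}$ with $\alpha_{i}\to0$) to divide by the non-vanishing $\left[L_{i},\overline{L_{i}}\right]\left(\partial r\right)$, $i\geq1$, and by $\left[W_{l},\overline{W_{l}}\right]\left(\partial r\right)$ for $l<j$, solves for $a_{j}^{i},b_{j}^{l}$ so that $\left[W_{j},\overline{L_{i}}\right]\left(\partial r\right)\equiv0$ for $1\leq i\leq n+m_{0}-2$ and $\left[W_{j},\overline{W_{l}}\right]\left(\partial r\right)\equiv0$ for $l<j$, while $\left[W_{j},\overline{W_{j}}\right]\left(\partial r\right)=\gamma_{j}\partial^{2}h_{m_{0}+j}/\partial w_{m_{0}+j}\partial\overline{w_{m_{0}+j}}$ with $\left|\gamma_{j}\right|\geq\nicefrac{1}{2}$. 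This step cannot kill the single remaining cross-term $\left[W_{j},\overline{L_{0}}\right]\left(\partial r\right)$, since $\left[L_{0},\overline{L_{0}}\right]\left(\partial r\right)$ may vanish at $\left(z^{0},w^{0}\right)$ and one cannot divide by it.

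I finish with the \emph{first-case correction}: replace $L_{0}$ by $L_{0}'=L_{0}-\sum_{k=1}^{m-m_{0}}c_{k}W_{k}$ with $c_{k}$ chosen so that $\left[L_{0}',\overline{W_{k}}\right]\left(\partial r\right)=\left[L_{0},\overline{W_{k}}\right]\left(\partial r\right)-c_{k}\left[W_{k},\overline{W_{k}}\right]\left(\partial r\right)\equiv0$. Exactly as in the end of the first case of \thmref{C-2-rho-plus-h-Loc-Diag}, the cross-term $\left[L_{0},\overline{W_{k}}\right]\left(\partial r\right)$ reduces to an expression of the form $\divideontimes\partial h_{m_{0}+k}/\partial\overline{w_{m_{0}+k}}$, so \hyporef{Hypothesis-III} together with the bound $\left|\gamma_{k}\right|\geq\nicefrac{1}{2}$ makes $c_{k}$ smooth near $\left(z^{0},w^{0}\right)$. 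The basis $\left(L_{0}',L_{1},\ldots,L_{n+m_{0}-2},W_{1},\ldots,W_{m-m_{0}}\right)$ then diagonalizes the Levi form of $r$ in that neighborhood. The central obstacle, and the reason the argument requires rank precisely $\geq n-2$, is that the first-case correction can absorb only one Levi-degenerate complex tangent direction; if rank $n-3$ were permitted, two degenerate fields $L_{0},L_{0}''$ would be coupled by an off-diagonal Levi entry $\left[L_{0},\overline{L_{0}''}\right]\left(\partial r\right)$ for which no elementary subtraction by the $W_{k}$'s is available.
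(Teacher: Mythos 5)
Your proof is correct and follows exactly the route the paper intends: the paper's own justification of this theorem is the single remark that one applies first the second-case construction and then the first-case correction of \thmref{C-2-rho-plus-h-Loc-Diag}, and you have filled in precisely those details, with the rank hypothesis correctly identified as what confines the Levi degeneracy to a single direction absorbable by the first-case step. The only superfluous point is your appeal to the rank hypothesis when $m_{0}\geq1$: there $z^{0}$ lies in $\Omega$ itself, where (\ref{eq:hypothesis-defining-func}) already makes $\rho$ strictly pluri-subharmonic, so $\rho^{1}$ has no Levi-null direction at all and the rank assumption is genuinely needed only at boundary points with $w^{0}=0$.
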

\begin{srrem}
\label{rem:geom-sep-convex-domains}If $\Omega$ is a smooth bounded
convex domain of finite type in $\mathbb{C}^{n}$ we do not know if
it is always possible to choose a defining function $\rho$ and a
function $h$ so that $\widetilde{\Omega}$ is ``completely geometrically
separated'' at any boundary point (we will see in \secref{convex-case}
that this is possible near $\left\{ w=0\right\} $).
\end{srrem}

\section{\label{sec:Relations-between-operators-omega-tilde-omega}Relations
between operators related to \texorpdfstring{$\widetilde{\Omega}$}{Omega-tilde}
and to \texorpdfstring{$\Omega$}{Omega}}

Assume that $\Omega$ is a smooth bounded pseudo-convex domain of
finite type in $\mathbb{C}^{n}$, that $\rho=\rho_{s}$ is a defining
function of $\Omega$ where $\rho_{s}$ satisfies (\ref{eq:hypothesis-defining-func})
as stated in \secref{Diederich-Fornaess-def-func}, and that $h$
satisfies, at least, \hyporef{Hypothesis-II} of \secref{Hypothesis_on_function_h}.
Thus, by \corref{Global_finite-type_rho+h} of \propref{Local-rho+h-negative-finite-type},
the domain
\[
\widetilde{\Omega}=\left\{ \left(z,w\right)\in\mathbb{C}^{n+m}\mbox{ s. t. }\rho(z)+h(w)<0\right\} ,
\]
is a smooth bounded pseudoconvex domain of $\mathbb{C}^{n+m}$ of
finite type.

In this section we assume that the functions $h_{i}$ defining $h$
are radial (i.e. $h(w)=\sum h_{i}\left(\left|w_{i}\right|\right)$,
and, taking into account the properties of the $\overline{\partial}$-Neumann
problem for $\widetilde{\Omega}$, we derive properties of solutions
of the $\overline{\partial}$-equation and  properties of the Bergman projections
related to the weight
\begin{equation}
\omega(z)=\int_{\left\{ h(w)<-\rho(z)\right\} }d\lambda(w).\label{eq:weight-on-Omega}
\end{equation}

Suppose $f=\sum_{i=1}^{n}f_{i}d\overline{z_{i}}$ is a $\left(0,1\right)$-form
on $\Omega$. Consider it as a $\left(0,1\right)$-form $\widetilde{f}$
in $\widetilde{\Omega}$. If $f$ is $\overline{\partial}$-closed, then
so is $\widetilde{f}$, and if $\widetilde{u}$ is a solution of $\overline{\partial}\widetilde{u}=\widetilde{f}$
in $\widetilde{\Omega}$, then $\widetilde{u}$ is holomorphic in
the variable $w$ and the function $u$ defined by $u(z)=\widetilde{u}(z,0)$
is a solution of the equation $\overline{\partial}u=f$ in $\Omega$.
Moreover, for all $\alpha\in\mathbb{N}^{n}$, denoting $D_{z}^{\alpha}=\frac{\partial^{\left|\alpha\right|}}{\partial z_{1}^{\alpha_{1}}\ldots,\partial z_{n}^{\alpha_{n}}}$,
we have $D_{z}^{\alpha}u(z)=D_{z}^{\alpha}\widetilde{u}(z,0)$, $w\mapsto D_{z}^{\alpha}\widetilde{u}(z,w)$
is holomorphic, for any $p\in\left[1,+\infty\right]$,
$w\mapsto\left|D_{z}^{\alpha}\widetilde{u}(z,w)\right|^{p}$
is pluri-subharmonic, and, by the mean value property (the functions $h_{i}$ being radial),
\[
D_{z}^{\alpha}u(z)=\left(\omega(z)\right)^{-1}\int_{\left\{ h(w)<-\rho(z)\right\} }D_{z}^{\alpha}\widetilde{u}(z,w)d\lambda(w),
\]
and
\[
\int_{\Omega}\left|D_{z}^{\alpha}u(z)\right|^{p}\omega(z)d\lambda(z)\leq\int_{\widetilde{\Omega}}\left|D_{z}^{\alpha}\widetilde{u}(z,w)\right|^{p}d\lambda(z,w).
\]

Thus:
\begin{sllem}
\label{lem:Relation-sol-d-bar-omega-omage-tilde}With the conditions
and notations stated above, for any $p\in\left[1,+\infty\right]$
and any integer $t\geq0$, denote by $L_{\omega}^{p,t}(\Omega)$ the
Sobolev space of functions $g$ (resp. of $\left(0,1\right)$-forms
$g=\sum_{i=1}^{n}g_{i}d\overline{z_{i}}$) such that, for all $\alpha\in\mathbb{N}^{n}$,
$\left|\alpha\right|\leq t$, $D_{z}^{\alpha}g$ belongs to the weighted
$L^{p}$ space $L_{\omega}^{p}(\Omega)=L^{p}(\Omega,\omega(z)d\lambda(z))$
(resp. to the weighted space $L_{\left(0,1\right),\omega}^{p,t}(\Omega)$
of $\left(0,1\right)$-forms $g$ on $\Omega$ whose coefficients
$g_{i}$ belong to $L_{\omega}^{p,t}(\Omega)$) equipped with the
norm $\left\Vert g\right\Vert _{L_{\omega}^{p,t}(\Omega)}=\sum_{\left|\alpha\right|\leq t}\left\Vert D_{z}^{\alpha}g\right\Vert _{L_{\omega}^{p}(\Omega)}$
(resp. $\left\Vert g\right\Vert _{L_{\left(0,1\right),\omega}^{p,t}(\Omega)}=\sum_{i=1}^{n}\sum_{\left|\alpha\right|\leq t}\left\Vert D_{z}^{\alpha}g_{i}\right\Vert _{L_{\omega}^{p}(\Omega)}$).
Then
\begin{enumerate}
\item $f$ is in $L_{\left(0,1\right),\omega}^{p,t}(\Omega)$ if an only
if $\widetilde{f}$ belongs to $L_{\left(0,1\right)}^{p,t}(\widetilde{\Omega})$
and, in this case, $\left\Vert f\right\Vert _{L_{\left(0,1\right),\omega}^{p,t}(\Omega)}=\left\Vert \widetilde{f}\right\Vert _{L_{\left(0,1\right)}^{p,t}\left(\widetilde{\Omega}\right)}$;
\item If $\widetilde{u}$ belongs to $L^{p,t}\left(\widetilde{\Omega}\right)$
then $u$ belongs to $L_{\omega}^{p,t}(\Omega)$ and $\left\Vert u\right\Vert _{L_{\omega}^{p,t}(\Omega)}\leq\left\Vert \widetilde{u}\right\Vert _{L^{p,t}\left(\widetilde{\Omega}\right)}$.
\end{enumerate}
\end{sllem}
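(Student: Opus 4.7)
The plan is to dispatch the two parts separately, the first by a straightforward Fubini computation and the second by combining the stated mean-value representation with Jensen's inequality.

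For part (1), note that $\widetilde{f}=\sum_{i=1}^{n}f_{i}(z)\,d\overline{z_{i}}$ has coefficients independent of $w$, and its $z$-derivatives $D_{z}^{\alpha}\widetilde{f}_{i}(z,w)=D_{z}^{\alpha}f_{i}(z)$ are likewise independent of $w$. For each multi-index $\alpha$ with $\lvert\alpha\rvert\leq t$ and each $i$, Fubini's theorem gives
\[
\int_{\widetilde{\Omega}}\bigl\lvert D_{z}^{\alpha}f_{i}(z)\bigr\rvert^{p}\,d\lambda(z,w)=\int_{\Omega}\bigl\lvert D_{z}^{\alpha}f_{i}(z)\bigr\rvert^{p}\Bigl(\int_{\{h(w)<-\rho(z)\}}\!d\lambda(w)\Bigr)\,d\lambda(z)=\int_{\Omega}\bigl\lvert D_{z}^{\alpha}f_{i}(z)\bigr\rvert^{p}\omega(z)\,d\lambda(z),
\]
by the definition \eqref{eq:weight-on-Omega} of $\omega$. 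Summing over $i$ and $\lvert\alpha\rvert\leq t$ gives the equality of norms; the case $p=\infty$ is immediate from $\operatorname{ess\,sup}_{\widetilde{\Omega}}|D_{z}^{\alpha}f_{i}|=\operatorname{ess\,sup}_{\Omega}|D_{z}^{\alpha}f_{i}|$.

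For part (2), I use the fact, recalled in the text preceding the lemma, that $w\mapsto D_{z}^{\alpha}\widetilde{u}(z,w)$ is holomorphic on the complete Reinhardt set $\{h(w)<-\rho(z)\}$ (because $h$ is radial in each $w_{i}$), that $D_{z}^{\alpha}u(z)=D_{z}^{\alpha}\widetilde{u}(z,0)$, and that the mean-value identity
\[
D_{z}^{\alpha}u(z)=\omega(z)^{-1}\int_{\{h(w)<-\rho(z)\}}D_{z}^{\alpha}\widetilde{u}(z,w)\,d\lambda(w)
\]
holds. For $p\in[1,+\infty[$, Jensen's inequality applied to the convex function $t\mapsto t^{p}$ and the probability measure $\omega(z)^{-1}\mathbf{1}_{\{h(w)<-\rho(z)\}}\,d\lambda(w)$ yields the sub-mean-value estimate
\[
\bigl\lvert D_{z}^{\alpha}u(z)\bigr\rvert^{p}\leq\omega(z)^{-1}\int_{\{h(w)<-\rho(z)\}}\bigl\lvert D_{z}^{\alpha}\widetilde{u}(z,w)\bigr\rvert^{p}\,d\lambda(w).
\]
Multiplying by $\omega(z)$, integrating over $\Omega$, and applying Fubini gives
\[
\int_{\Omega}\bigl\lvert D_{z}^{\alpha}u(z)\bigr\rvert^{p}\omega(z)\,d\lambda(z)\leq\int_{\widetilde{\Omega}}\bigl\lvert D_{z}^{\alpha}\widetilde{u}(z,w)\bigr\rvert^{p}\,d\lambda(z,w),
\]
and summing over $\lvert\alpha\rvert\leq t$ yields $\|u\|_{L_{\omega}^{p,t}(\Omega)}\leq\|\widetilde{u}\|_{L^{p,t}(\widetilde{\Omega})}$. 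The case $p=\infty$ is trivial since $u$ is the slice of $\widetilde{u}$ at $w=0$.

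There is no genuine obstacle here: the nontrivial ingredients (holomorphicity of $\widetilde{u}$ in $w$, commutation of $D_{z}^{\alpha}$ with the $w$-holomorphic structure, and the mean-value property on the complete Reinhardt region $\{h(w)<-\rho(z)\}$) have all been put in place in the discussion immediately preceding the lemma. The role of the proof is simply to interchange the sub-mean-value bound with integration in $z$ via Fubini, so the argument reduces to bookkeeping. The only small point worth noting is that Jensen's inequality is what converts the exact mean-value identity (valid for the holomorphic function itself) into the required inequality for its $p$-th power.
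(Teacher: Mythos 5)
Your proof is correct and follows essentially the same route as the paper, which states this lemma as a direct consequence of the preceding discussion (the Fubini identity for $w$-independent forms, the holomorphy of $w\mapsto D_{z}^{\alpha}\widetilde{u}(z,w)$, and the mean-value identity over the circled region $\left\{ h(w)<-\rho(z)\right\} $). The only cosmetic difference is that the paper obtains the sub-mean-value bound for $\left|D_{z}^{\alpha}\widetilde{u}\right|^{p}$ by noting this function is pluri-subharmonic in $w$, whereas you derive it from the exact mean-value identity via Jensen's inequality; the two are interchangeable.
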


Similarly, a function $u$ belongs to $L_{\omega}^{2}(\Omega)$ if
and only if the function $\widetilde{u}$, defined on $\widetilde{\Omega}$
by $\widetilde{u}(z,w)=u(z)$, belongs to $L^{2}\left(\widetilde{\Omega}\right)$.
So, if $P^{\widetilde{\Omega}}$ denotes the Bergman projection of
$\widetilde{\Omega}$ and $P_{\omega}^{\Omega}$ the Bergman projection
of $\Omega$ with the weight $\omega$, as $h(w)=\sum_{i=1}^{p}h_{i}\left(\left|w_{i}\right|\right)$,
$w_{i}\in\mathbb{C}^{m_{i}}$, by the mean value property we have
$P_{\omega}^{\Omega}(u)(z)=P^{\widetilde{\Omega}}\left(\widetilde{u}\right)(z,0)$.
Then:
\begin{sllem}
\label{lem:Relation-Bergman-omega-omega-tilde}With the above notations,
\begin{enumerate}
\item We have $\left\Vert u\right\Vert _{L_{\omega}^{p,t}(\Omega)}=\left\Vert \widetilde{u}\right\Vert _{L^{p,t}\left(\widetilde{\Omega}\right)}$;
\item We have $P_{\omega}^{\Omega}(u)(z)=P^{\widetilde{\Omega}}\left(\widetilde{u}\right)(z,0)$
and $\left\Vert P_{\omega}^{\Omega}(u)\right\Vert _{L_{\omega}^{p,t}(\Omega)}\leq\left\Vert P^{\widetilde{\Omega}}\left(\widetilde{u}\right)\right\Vert _{L^{p,t}\left(\widetilde{\Omega}\right)}$.
\item For any $\overline{\partial}$-closed $\left(0,1\right)$-form $f\in L_{\omega}^{2}(\Omega)$,
denoting $\widetilde{f}(z,w)=f(z)$, $\overline{\partial}^{*}\mathcal{N}_{\widetilde{\Omega}}\left(\widetilde{f}\right)(z,0)$
is the solution of the equation $\overline{\partial}u=f$ orthogonal
to holomorphic functions in $L_{\omega}^{2}(\Omega)$.
\item If $K_{B,\omega}^{\Omega}$ (resp. $K_{B}^{\widetilde{\Omega}}$)
denotes the Bergman kernel of $\Omega$ associated to the measure
$\omega(z)d\lambda(z)$ (resp. of $\widetilde{\Omega}$ associated
to the Lebesgue measure), we have $K_{B,\omega}^{\Omega}(z,\zeta)=K_{B}^{\widetilde{\Omega}}\left((z,0),(\zeta,0)\right)$.
\end{enumerate}
\end{sllem}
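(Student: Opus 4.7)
The plan is to exploit the single fact that, for every fixed $z\in\Omega$, the set $D_{z}=\{w\in\mathbb{C}^{m}\colon h(w)<-\rho(z)\}$ is a bounded Reinhardt domain containing the origin (this is where we use that the $h_{i}$ are radial), so that any function $g$ holomorphic on a neighbourhood of $\overline{D_{z}}$ admits a Laurent expansion $g(w)=\sum_{\alpha}c_{\alpha}w^{\alpha}$ in which integration over $D_{z}$ kills every monomial except $\alpha=0$. This yields the key mean value identity
\begin{equation*}
g(0)\;=\;\frac{1}{\omega(z)}\int_{D_{z}}g(w)\,d\lambda(w),
\end{equation*}
and, by Cauchy--Schwarz, the corresponding sub-mean value inequality $|g(0)|^{2}\le\omega(z)^{-1}\int_{D_{z}}|g(w)|^{2}d\lambda(w)$.

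First I would prove (1). Since $\widetilde{u}(z,w)=u(z)$ does not depend on $w$ and the $z$-derivatives commute with the embedding, the identity
$\int_{\widetilde{\Omega}}|D_{z}^{\alpha}\widetilde{u}|^{p}d\lambda(z,w)=\int_{\Omega}|D_{z}^{\alpha}u(z)|^{p}\bigl(\int_{D_{z}}d\lambda(w)\bigr)d\lambda(z)=\int_{\Omega}|D_{z}^{\alpha}u(z)|^{p}\omega(z)d\lambda(z)$ is immediate from Fubini and the definition (\ref{eq:weight-on-Omega}) of $\omega$. Summing over $|\alpha|\le t$ gives the required norm equality.

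For (2), set $\widetilde{F}=P^{\widetilde{\Omega}}(\widetilde{u})$ and $F(z)=\widetilde{F}(z,0)$. Clearly $F$ is holomorphic on $\Omega$. Applied to $\widetilde{F}(z,\cdot)$ on $D_{z}$, the sub-mean value inequality and Fubini give $\|F\|_{L^{2}_{\omega}(\Omega)}\le\|\widetilde{F}\|_{L^{2}(\widetilde{\Omega})}\le\|\widetilde{u}\|_{L^{2}(\widetilde{\Omega})}=\|u\|_{L^{2}_{\omega}(\Omega)}$, so $F\in A^{2}_{\omega}(\Omega)$. To identify $F$ with $P^{\Omega}_{\omega}(u)$, pick any $G\in A^{2}_{\omega}(\Omega)$ and lift it to $\widetilde{G}(z,w):=G(z)\in A^{2}(\widetilde{\Omega})$ by part~(1). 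Since $\widetilde{u}-\widetilde{F}\perp\widetilde{G}$ in $L^{2}(\widetilde{\Omega})$, Fubini together with the mean value identity applied to $\widetilde{F}(z,\cdot)$ (which is holomorphic in $w$) gives
\begin{equation*}
0=\int_{\widetilde{\Omega}}(\widetilde{u}-\widetilde{F})\overline{\widetilde{G}}\,d\lambda=\int_{\Omega}\bigl(u(z)\omega(z)-\omega(z)\widetilde{F}(z,0)\bigr)\overline{G(z)}\,d\lambda(z)=\langle u-F,G\rangle_{L^{2}_{\omega}(\Omega)},
\end{equation*}
so $F=P^{\Omega}_{\omega}(u)$. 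The weighted Sobolev norm inequality in~(2) then follows by applying the same sub-mean value argument to each $D_{z}^{\alpha}\widetilde{F}$, which is again holomorphic in $w$.

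For (3), note that by the orthogonal decomposition for the $\overline{\partial}$-Neumann problem on $\widetilde{\Omega}$, the function $\widetilde{v}:=\overline{\partial}^{*}\mathcal{N}_{\widetilde{\Omega}}(\widetilde{f})$ satisfies $\overline{\partial}\widetilde{v}=\widetilde{f}$ and $\widetilde{v}\perp A^{2}(\widetilde{\Omega})$. By the observation preceding Lemma~3.1, $\widetilde{v}$ is holomorphic in $w$, hence $v(z):=\widetilde{v}(z,0)$ solves $\overline{\partial}v=f$ on $\Omega$; the orthogonality $v\perp A^{2}_{\omega}(\Omega)$ follows from the same lifting trick used in (2) (test against $\widetilde{G}(z,w)=G(z)$). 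Finally, for (4), I expand the Bergman projection of $\widetilde{\Omega}$ as an integral against $K^{\widetilde{\Omega}}_{B}$ and apply the mean value identity in the $w$-variable to the antiholomorphic kernel $w\mapsto K^{\widetilde{\Omega}}_{B}((z,0),(\zeta,w))$ on $D_{\zeta}$; comparing with the integral representation of $P^{\Omega}_{\omega}$ forces $K^{\Omega}_{B,\omega}(z,\zeta)=K^{\widetilde{\Omega}}_{B}((z,0),(\zeta,0))$. The only subtle point in the whole argument is ensuring that the mean value property applies at the boundary of~$D_{z}$ (where the Reinhardt domain may degenerate as $\rho(z)\to0$), but this is handled by a routine interior approximation, which is the only real obstacle one has to address.
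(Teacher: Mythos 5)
Your proof is correct and follows essentially the same route as the paper: the radial symmetry of the $h_{i}$ makes each slice $D_{z}$ rotation-invariant, so the mean value identity in $w$ together with Fubini and the sub-mean-value inequality (which for general $p$ comes from H\"older or the plurisubharmonicity of $\left|g\right|^{p}$ rather than Cauchy--Schwarz) yields all four statements exactly as in the paper's discussion preceding the lemma. The boundary subtlety you flag at the end is harmless: averaging over the torus action and applying Fubini requires only holomorphy on $D_{z}$ and integrability there, not holomorphy past $\overline{D_{z}}$, so no interior approximation is actually needed.
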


Now, we will derive from these lemmas some simple weighted estimates
on $\Omega$ when the corresponding unweighted estimates are known
on $\widetilde{\Omega}$.

\subsection{Sobolev estimates for general pseudo-convex domain}

As $\widetilde{\Omega}$ is of finite type, by the fundamental result
of D. Catlin (\cite{Cat87}) the $\overline{\partial}$-Neumann problem
of $\widetilde{\Omega}$ satisfies a subelliptic estimate. Then, all
the associated operators map continuously the $L^{2}$ Sobolev spaces
of $\widetilde{\Omega}$ into themselves.

To respect traditional notations, denote, for $t\in\mathbb{N}$,
$W_{\omega}^{t}(\Omega)=L_{\omega}^{2,t}(\Omega)$ and $W_{\left(0,1\right),\omega}^{t}(\Omega)=L_{\left(0,1\right),\omega}^{2,t}(\Omega)$.

Lemmas \ref{lem:Relation-Bergman-omega-omega-tilde} and \ref{lem:Relation-sol-d-bar-omega-omage-tilde}
imply thus:
\begin{stthm}
Let $\Omega$ be a smooth bounded pseudo-convex domain of finite type
in $\mathbb{C}^{n}$. Let $\rho=\rho_{s}$ be a defining function
of $\Omega$ where $\rho_{s}$ satisfies (\ref{eq:hypothesis-defining-func})
as stated in \secref{Diederich-Fornaess-def-func}. Let $h$ be a
smooth function on $\mathbb{C}^{m}$ satisfying \hyporef{Hypothesis-II}
of \secref{Hypothesis_on_function_h} the functions $h_{i}$ being
radial. Then, $\omega$ being the weight defined by (\ref{eq:weight-on-Omega}):
\begin{enumerate}
\item For any integer $t$, if $f$ is a $\overline{\partial}$-closed $\left(0,1\right)$-form
in $W_{\left(0,1\right),\omega}^{t}(\Omega)$, then the solution to
the equation $\overline{\partial}u=f$ orthogonal to holomorphic functions
in $L_{\omega}^{2}(\Omega)$ satisfies $\left\Vert u\right\Vert _{W_{\omega}^{t}(\Omega)}\leq C\left\Vert f\right\Vert _{W_{\left(0,1\right),\omega}^{t}(\Omega)}$,
the constant $C$ depending on $\rho$, $h$ and $t$;
\item For any integer $t$, the weighted Bergman projection $P_{\omega}^{\Omega}$
maps the Sobolev space $W_{\omega}^{t}(\Omega)$ continuously into
itself.
\end{enumerate}
\end{stthm}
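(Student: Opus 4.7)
The proof is essentially a transfer argument from $\widetilde{\Omega}$ to $\Omega$, built on the machinery already assembled in the paper. My plan is to verify that $\widetilde{\Omega}$ satisfies the hypothesis needed for subelliptic $L^2$-Sobolev theory, then pull back Sobolev estimates through the lifting/restriction correspondence given by Lemmas \ref{lem:Relation-sol-d-bar-omega-omage-tilde} and \ref{lem:Relation-Bergman-omega-omega-tilde}.

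First I would verify that $\widetilde{\Omega}$ is a smooth bounded pseudo-convex domain of finite type in $\mathbb{C}^{n+m}$. Pseudo-convexity follows from Proposition \ref{prop:Global_pseudoconvexity-for-rho-plus-h}\,(\ref{1-of-Global-pseudoconvexity-rho-plus-h}) applied with $\rho=\rho_s$ satisfying (\ref{eq:hypothesis-defining-func}) and $h$ satisfying \hyporef{Hypothesis-II} (which in particular implies \hyporef{Hypothesis-I} with $s'=0$, since each $h_i$ is plurisubharmonic). Finite type at every boundary point follows from Corollary \ref{cor:Global_finite-type_rho+h}\,(\ref{2-Cor-global-rho-plus-h-finite-type}): at points with some $w_i^0=0$ the type is bounded by $\max\,{\rm typ}_{0}(h_i)$, at points where all $w_i^0\neq 0$ the boundary is even strictly pseudo-convex, and at points $(z^0,0)$ the type is $\max(\tau_{z^0},\,{\rm typ}_0(h_i))$ by the first part of Corollary \ref{cor:Global_finite-type_rho+h}. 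Smoothness and boundedness are automatic from the hypotheses on $h$.

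Next I invoke D.~Catlin's theorem \cite{Cat87}: since $\widetilde{\Omega}$ is smoothly bounded pseudo-convex and of finite type, the $\overline{\partial}$-Neumann problem on $\widetilde{\Omega}$ is subelliptic at every boundary point, which by standard Kohn-type arguments (see e.g.\ \cite{Str10}) implies that $\mathcal{N}_{\widetilde{\Omega}}$, $\overline{\partial}^{*}\mathcal{N}_{\widetilde{\Omega}}$ and the (unweighted) Bergman projection $P^{\widetilde{\Omega}}$ map $W^{t}(\widetilde{\Omega})$ continuously into itself for every $t\in\mathbb{N}$.

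Finally I transfer: for (2), given $u\in W^{t}_{\omega}(\Omega)$, form the lift $\widetilde{u}(z,w)=u(z)$; by Lemma \ref{lem:Relation-Bergman-omega-omega-tilde}\,(1), $\widetilde{u}\in W^{t}(\widetilde{\Omega})$ with equal norm, and by part (2) of the same lemma,
\[
\bigl\|P_{\omega}^{\Omega}(u)\bigr\|_{W^{t}_{\omega}(\Omega)}\;\leq\;\bigl\|P^{\widetilde{\Omega}}(\widetilde{u})\bigr\|_{W^{t}(\widetilde{\Omega})}\;\leq\;C\,\bigl\|\widetilde{u}\bigr\|_{W^{t}(\widetilde{\Omega})}\;=\;C\,\|u\|_{W^{t}_{\omega}(\Omega)}.
\]
For (1), given a $\overline{\partial}$-closed form $f\in W^{t}_{(0,1),\omega}(\Omega)$, lift it to $\widetilde{f}$ on $\widetilde{\Omega}$; by Lemma \ref{lem:Relation-sol-d-bar-omega-omage-tilde}\,(1), $\widetilde{f}\in W^{t}_{(0,1)}(\widetilde{\Omega})$ with equal norm, and $\widetilde{f}$ is still $\overline{\partial}$-closed. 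The canonical solution on $\widetilde{\Omega}$ is $\widetilde{v}=\overline{\partial}^{*}\mathcal{N}_{\widetilde{\Omega}}(\widetilde{f})\in W^{t}(\widetilde{\Omega})$; restricting to $w=0$ gives (by Lemma \ref{lem:Relation-Bergman-omega-omega-tilde}\,(3)) the canonical solution $u$ of $\overline{\partial}u=f$ in $L^{2}_{\omega}(\Omega)$, and Lemma \ref{lem:Relation-sol-d-bar-omega-omage-tilde}\,(2) gives $\|u\|_{W^{t}_{\omega}(\Omega)}\leq\|\widetilde{v}\|_{W^{t}(\widetilde{\Omega})}\leq C\,\|\widetilde{f}\|_{W^{t}(\widetilde{\Omega})}=C\,\|f\|_{W^{t}_{(0,1),\omega}(\Omega)}$.

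There is no real obstacle once the setup is in place; the only point requiring care is the verification that all the structural hypotheses of Section \ref{sec:A-Hartogs-domain-Omega-Tilde} hold under the assumptions of the theorem (in particular, that the radiality of the $h_{i}$ is exactly what makes the mean-value identity of Lemma \ref{lem:Relation-Bergman-omega-omega-tilde} work and therefore what reduces the weighted problem to the unweighted one on $\widetilde{\Omega}$).
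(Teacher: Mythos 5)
Your proposal is correct and follows exactly the paper's own route: the authors likewise note that $\widetilde{\Omega}$ is pseudo-convex of finite type (via Corollary to Proposition \ref{prop:Local-rho+h-negative-finite-type}), invoke Catlin's subelliptic estimates to get $L^2$-Sobolev boundedness of the $\overline{\partial}$-Neumann operators and Bergman projection on $\widetilde{\Omega}$, and then state that Lemmas \ref{lem:Relation-sol-d-bar-omega-omage-tilde} and \ref{lem:Relation-Bergman-omega-omega-tilde} yield the theorem. Your write-up simply makes explicit the transfer steps that the paper leaves implicit.
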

\thmref{L2-Sobolev-Est-General} is (2) of the following corollary:
\begin{cor*}
Let $\Omega$ and $\rho$ be as in the theorem. Let $r\geq0$ be a
rational number and $\varphi_{r}$ be the pluri-subharmonic function
$\varphi_{r}=-r\log(-\rho)$ (c.f. \remref{-log-of--D-F-def-function-psh}).
Let us denote by $\mathcal{N}_{\varphi_{r}}$ the $\overline{\partial}$-Neumann
operator for the weight $e^{-\varphi_{r}}$ acting on $\left(0,1\right)$-forms
and by $\overline{\partial}_{\varphi_{r}}^{*}\mathcal{N}_{\varphi_{r}}^{\left(0,1\right)}$
the restriction to the space of $\overline{\partial}$-closed forms
in $L_{\left(0,1\right),\left(-\rho\right)^{r}}^{2}(\Omega)$ of the
operator $\overline{\partial}_{\varphi_{r}}^{*}\mathcal{N}_{\varphi_{r}}$
giving the $L_{\left(-\rho\right)^{r}}^{2}$ minimal solution of the
$\overline{\partial}$-equation. Let us denote by $\mathcal{B}_{\varphi_{r}}$
the Bergman projection of $L_{\varphi_{r}}^{2}(\Omega)$. Then:
\begin{enumerate}
\item For all $t\geq0$, $\overline{\partial}_{\varphi_{r}}^{*}\mathcal{N}_{\varphi_{r}}^{\left(0,1\right)}$
maps the subspace of $\overline{\partial}$-closed forms
of $W_{\left(0,1\right),\left(-\rho\right)^{r}}^{t}(\Omega)$ continuously into
$W_{\left(-\rho\right)^{r}}^{t}(\Omega)$.
\item For all real number $t$, $\mathcal{B}_{\varphi_{r}}$ maps
$W_{\left(-\rho\right)^{r}}^{t}(\Omega)$ continuously into itself.
\end{enumerate}
\end{cor*}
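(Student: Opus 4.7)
The plan is to reduce both statements of the corollary to the preceding theorem by choosing the auxiliary function $h$ so that the weight $\omega$ defined by (\ref{eq:weight-on-Omega}) is, up to a multiplicative constant, equal to $(-\rho)^r$. Given $r = a/b \in \mathbb{Q}_+$ with $a,b\in\mathbb{N}_*$ (the case $r=0$ is the classical unweighted result of Catlin), I would take $m=a$ and
\[
h(w)=|w|^{2b},\qquad w\in\mathbb{C}^{a}.
\]
This fits \hyporef{Hypothesis-II} with $p=1$, $m_{1}=a$, $h_{1}(w_{1})=|w_{1}|^{2b}$, and $h_{1}$ is radial; all required properties (vanishing only at $0$, non-vanishing gradient off $0$, properness, plurisubharmonicity of $\log h_{1}$, strict plurisubharmonicity outside the origin, finite type $2b$) are exactly those verified in \exaref{Basic_example_function_h}. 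A direct integration in polar coordinates gives
\[
\omega(z)=\int_{\{|w|^{2b}<-\rho(z)\}}d\lambda(w)=c_{a,b}\,(-\rho(z))^{a/b},
\]
so $\omega$ and $e^{-\varphi_{r}}=(-\rho)^{r}$ differ only by the constant $c_{a,b}$. Consequently $L^{2}_{\omega}(\Omega)=L^{2}_{\varphi_{r}}(\Omega)$ with proportional norms, the two Bergman projections coincide, and the $L^{2}_{\omega}$-minimal solution of $\overline{\partial}u=f$ is precisely $\overline{\partial}_{\varphi_{r}}^{*}\mathcal{N}_{\varphi_{r}}^{(0,1)}(f)$.

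With this identification in hand, statement (1) for integer $t\geq 0$ is the first conclusion of the preceding theorem applied to this $(\rho,h)$, and statement (2) for integer $t\geq 0$ is the second conclusion. Extension to arbitrary real $t\geq 0$ is obtained by interpolating between consecutive non-negative integers (real or complex interpolation both work since the weights are smooth strictly positive inside $\Omega$ and the spaces $W^t_{(-\rho)^r}(\Omega)$ form a standard scale).

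For the Bergman projection in (2) at negative $t$, I would define $W^{t}_{(-\rho)^{r}}(\Omega)$ for $t<0$ as the topological dual of $W^{-t}_{(-\rho)^{r}}(\Omega)$ under the weighted pairing $(u,v)\mapsto\int_{\Omega}u\overline{v}(-\rho)^{r}\,d\lambda$. Since $\mathcal{B}_{\varphi_{r}}$ is self-adjoint with respect to this pairing, its boundedness on $W^{-t}_{(-\rho)^{r}}$ (already established above since $-t\geq 0$) transfers by duality to boundedness on $W^{t}_{(-\rho)^{r}}$; interpolation then fills in any remaining real values.

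The translation via Lemmas \ref{lem:Relation-sol-d-bar-omega-omage-tilde} and \ref{lem:Relation-Bergman-omega-omega-tilde} is essentially bookkeeping; the only delicate point is the negative-index case of (2), where one must check that the duality is taken with respect to the correct weighted pairing, so that self-adjointness of $\mathcal{B}_{\varphi_{r}}$ in $L^{2}_{\varphi_{r}}$ really identifies the dual of $W^{-t}_{(-\rho)^{r}}$ with $W^{t}_{(-\rho)^{r}}$ and does not introduce extra weight factors. This, and verifying that interpolation commutes with the weight $(-\rho)^{r}$, are the main obstacles; everything else is a direct consequence of the preceding theorem.
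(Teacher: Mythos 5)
Your proof is correct and follows essentially the paper's own route: the authors (see (\ref{1-Remark-Sobolev-L2}) of \remref{Remark-Sobolev-L2}) likewise deduce the corollary from the preceding theorem by taking $h(w)=\sum_{i}\left|w_{i}\right|^{2q_{i}}$, $w_{i}\in\mathbb{C}$, with $\sum_{i}\nicefrac{1}{q_{i}}=r$, so that the weight (\ref{eq:weight-on-Omega}) becomes a constant multiple of $\left(-\rho\right)^{r}$; your single-block choice $h(w)=\left|w\right|^{2b}$ on $\mathbb{C}^{a}$ for $r=\nicefrac{a}{b}$ is an equally valid realization of $\omega=c\left(-\rho\right)^{r}$ under \hyporef{Hypothesis-II} with $h_{1}$ radial (the restriction $m_{i}=1$ is only needed in later sections). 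The paper offers no argument at all for non-integer or negative $t$ in part (2), so your duality-plus-interpolation discussion supplies detail the authors leave implicit rather than diverging from their method.
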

\begin{srrem}
\label{rem:Remark-Sobolev-L2}\quad\mynobreakpar
\begin{enumerate}
\item \label{1-Remark-Sobolev-L2}In the corollary, the function $h$ is equal to $\sum\left|w_{i}\right|^{2q_{i}}$,
$w_{i}\in\mathbb{C}$, the integers $q_{i}$ being chosen so that
$r=\sum\frac{1}{q_{i}}$. $\widetilde{\Omega}$ being of finite type,
there is a gain in the Sobolev scale for the estimates of the $\overline{\partial}$-Neumann
problem on $\widetilde{\Omega}$. This implies a similar gain for
$\overline{\partial}_{\varphi_{r}}^{*}\mathcal{N}_{\varphi_{r}}^{\left(0,1\right)}$.
But this gain is the inverse on the type of $\widetilde{\Omega}$
which is given in Corollary 1 of \propref{Local-rho+h-negative-finite-type}
and, then, can be very small depending on $r$.
\item If $\Omega$ is a smooth bounded pseudo-convex domain in $\mathbb{C}^{n}$
(\emph{not assumed of finite type}) admitting a defining function $\rho$ which
is pluri-subharmonic in $\overline{\Omega}$ then:

\begin{enumerate}
\item If $h$ is a positive pluri-subharmonic function satisfying $\nabla h(w)\neq0$
if $w\neq0$, $\lim_{\left|w\right|\rightarrow+\infty}h(w)=+\infty$
and $h(w)=\sum_{i=1}^{p}h_{i}\left(\left|w_{i}\right|\right)$, $w=\left(w_{1},\ldots,w_{p}\right)$,
$w_{i}\in\mathbb{C}^{m_{i}}$, then $\widetilde{\Omega}$ is bounded, admits a
pluri-subharmonic defining function and, applying a theorem of H.
Boas \& E. Straube (\cite{Boas-Straube-def-psh91}), we get that,
for all real number $t$, the weighted Bergman projection $P_{\omega}^{\Omega}$
maps the Sobolev spaces $W_{\omega}^{t}(\Omega)$ continuously into
themselves.
\item Moreover, if $\rho$ is of finite type in $\overline{\Omega}$ then
\corref{Global-finite-type-psh-def-func} of \propref{Local-rho+h-negative-finite-type}
shows that the results of the theorem and the corollary are also valid
using $\rho$ to define $\widetilde{\Omega}$ and thus for other weights
$\omega$.
\end{enumerate}
\end{enumerate}
\end{srrem}

\subsection{\label{sec:Lipschitz-and-first-Lp-estimates-C2}Lipschitz estimates
for domains in $\mathbb{C}^{2}$}

Here we obtain estimates on weighted Bergman projections of $\Omega$
using only properties of the Bergman projection of the domain $\widetilde{\Omega}$.
For this we need $\widetilde{\Omega}$ to be ``completely geometrically
separated'' and we assume:

\medskip{}

\emph{$\Omega$ is a domain in $\mathbb{C}^{2}$, or the rank of the
Levi form of $\partial\Omega$ is $\geq n-2$, $\rho=\rho_{s}$ with
$\rho_{s}$ satisfying (\ref{eq:hypothesis-defining-func}) as stated
in \secref{Diederich-Fornaess-def-func} and $h$ satisfies \hyporef{Hypothesis-III}
of \secref{Hypothesis_on_function_h}. We denote by $\omega=\omega_{\rho,h}$
the associated weight (\eqref{weight-on-Omega}).}

\medskip{}

Let $M$ be the type of $\Omega$. By \cite{Charpentier-Dupain-Geometery-Finite-Type-Loc-Diag,Charpentier-Dupain-Szego-Barcelone},
we know that $P^{\widetilde{\Omega}}$ maps continuously the Lipschitz
space $\Lambda_{\alpha}(\widetilde{\Omega})$, $\alpha\geq0$, into
itself and that the space of holomorphic functions in $\Lambda_{\alpha}(\Omega)$
is continuously embedded in the anisotropic Lipschitz space $\Gamma_{\alpha}(\Omega)$
for $\alpha<\nicefrac{1}{M}$. Then \lemref{Relation-Bergman-omega-omega-tilde} gives immediately:
\begin{stthm}
In the conditions stated above for $\Omega$, $\rho$, $h$ and $\omega$,
the weighted Bergman projection $P_{\omega}^{\Omega}$ maps
the Lipschitz space $\Lambda_{\alpha}(\Omega)$ continuously into
itself for all $\alpha\geq0$ and into the anisotropic Lipschitz space
$\Gamma_{\alpha}(\Omega)$ for $\alpha<\nicefrac{1}{M}$.\end{stthm}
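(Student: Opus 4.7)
The plan is to exploit the transfer principle of \lemref{Relation-Bergman-omega-omega-tilde}, which identifies $P_\omega^\Omega(u)(z)$ with the restriction of $P^{\widetilde\Omega}(\widetilde u)$ to the slice $\{w=0\}$, where $\widetilde u(z,w)=u(z)$. Under the hypotheses listed just before the theorem, \thmref{C-2-rho-plus-h-Loc-Diag} together with its $\mathrm{rank}\geq n-2$ extension stated immediately afterwards shows that $\widetilde\Omega$ is pseudo-convex of finite type with Levi form locally diagonalizable, hence completely geometrically separated in the sense of \cite{CD08}. Consequently the Bergman projection $P^{\widetilde\Omega}$ is subject to the Lipschitz-continuity theorems of \cite{Charpentier-Dupain-Geometery-Finite-Type-Loc-Diag,Charpentier-Dupain-Szego-Barcelone}, and these are the only inputs we need beyond the two lemmas of the current section.

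First I would verify that the lift $u\mapsto\widetilde u$ sends $\Lambda_\alpha(\Omega)$ continuously into $\Lambda_\alpha(\widetilde\Omega)$: since $\widetilde u$ is constant along the $w$-directions, the estimate $|\widetilde u(z,w)-\widetilde u(z',w')|=|u(z)-u(z')|\leq\|u\|_{\Lambda_\alpha(\Omega)}|(z,w)-(z',w')|^\alpha$ is immediate, and the higher-order isotropic seminorms for $\alpha\geq 1$ behave analogously because derivatives in the $w$-variables are zero. Applying the first cited Lipschitz theorem on $\widetilde\Omega$ then yields $P^{\widetilde\Omega}(\widetilde u)\in\Lambda_\alpha(\widetilde\Omega)$ with norm controlled by $C\|u\|_{\Lambda_\alpha(\Omega)}$. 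Restricting an isotropic Lipschitz function to the affine slice $\{w=0\}$ is norm-decreasing, so by \lemref{Relation-Bergman-omega-omega-tilde}(2) the function $P_\omega^\Omega(u)(z)=P^{\widetilde\Omega}(\widetilde u)(z,0)$ belongs to $\Lambda_\alpha(\Omega)$ with a continuous estimate, which gives the first assertion.

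For the anisotropic part, the essential observation is that the second cited result is applied directly on $\Omega$, not on $\widetilde\Omega$: since $\Omega$ is of finite type $M$, the space of holomorphic functions in $\Lambda_\alpha(\Omega)$ is continuously embedded in $\Gamma_\alpha(\Omega)$ whenever $\alpha<1/M$. Because $P_\omega^\Omega(u)$ is holomorphic on $\Omega$ and we have just placed it in $\Lambda_\alpha(\Omega)$, composing the two estimates delivers $P_\omega^\Omega(u)\in\Gamma_\alpha(\Omega)$ with the desired bound. This routing explains why the threshold is $1/M$, the reciprocal of the type of $\Omega$ itself, rather than the (a priori smaller) reciprocal of the type of $\widetilde\Omega$: we never need to compare the Nagel--Stein pseudo-metric on $\widetilde\Omega$ with the one on $\Omega$.

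The real content of the theorem is upstream. The genuine difficulties, namely establishing the geometric structure of $\widetilde\Omega$ and proving the Lipschitz/anisotropic regularity for Bergman projections on completely geometrically separated domains, have been dispatched by \thmref{C-2-rho-plus-h-Loc-Diag} and the two external references. The only place a subtlety could arise is the behaviour of the lift-and-restrict construction under the isotropic Lipschitz norm, and both operations are transparent from the definitions; I therefore expect the full write-up to be short.
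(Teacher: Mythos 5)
Your proposal is correct and follows essentially the same route as the paper: the paper's own argument consists precisely of invoking \thmref{C-2-rho-plus-h-Loc-Diag} (and its rank $\geq n-2$ extension) to make $\widetilde\Omega$ completely geometrically separated, citing \cite{Charpentier-Dupain-Geometery-Finite-Type-Loc-Diag,Charpentier-Dupain-Szego-Barcelone} for the $\Lambda_\alpha(\widetilde\Omega)$-boundedness of $P^{\widetilde\Omega}$ and for the embedding of holomorphic $\Lambda_\alpha(\Omega)$ functions into $\Gamma_\alpha(\Omega)$ for $\alpha<\nicefrac{1}{M}$ on $\Omega$ itself, and then applying the lift-and-restrict identity of \lemref{Relation-Bergman-omega-omega-tilde}. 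Your observation that the anisotropic embedding is used on $\Omega$ rather than on $\widetilde\Omega$ (so the threshold is the reciprocal of the type of $\Omega$) matches the paper exactly.
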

\begin{rem*}
\quad\mynobreakpar
\begin{enumerate}

\item In the next section, using pointwise estimates of the kernel of $P_{\omega}^{\Omega}$
we will extend the Lipschitz estimate for $P_{\omega}^{\Omega}$ to
convex domains of finite type in $\mathbb{C}^{n}$ but for a smaller
class of weights $\omega$.

\item In the conditions of the preceding theorem, choosing
$h(w)=\sum_{i}\left|w_{i}\right|^{q_{i}}$, $w_{i}\in\mathbb{C}$,
the weight $\omega$ is equal to (a constant times) $\left(-\rho\right)^{q}$, with
 $q=\sum\frac{1}{q_{i}}$. Using that the Bergman projection $P^{\widetilde{\Omega}}$ of
 $\widetilde{\Omega}$ maps the Sobolev spaces $L_{s}^{p}$ ($1<p<+\infty$,
$s\in\mathbb{N}$) continuously into themselves
 (\cite{Charpentier-Dupain-Geometery-Finite-Type-Loc-Diag,Charpentier-Dupain-Szego-Barcelone}),
we get immediately that the weighted Bergman projections $P_{\omega}^{\Omega}$
maps the (weighted) Sobolev spaces
$L_{s}^{p}\left(\left(-\rho\right)^{q}d\lambda\right)$
continuously into themselves and it is easy to extended this to the spaces
$L_{s}^{p}\left(\left(-\rho\right)^{t}d\lambda\right)$
for $-1<t-q<p-1$.\\
In the next section, establishing pointwise estimates of the kernel
of $P_{\omega}^{\Omega}$ we will get the (better) results stated
in \thmref{Sobolev-Lp_Lip_Bergman}.
\end{enumerate}
\end{rem*}

\section{\label{sec:Sharp-estimates-of-weighted-Bergman-kernel}Sharp estimates
of the weighted Bergman kernel and proof of \texorpdfstring{\thmref{Sobolev-Lp_Lip_Bergman}}{Theorem 1.1}}

The aim of this section is to establish precise pointwise estimates
of the kernel of the weighted Bergman projection $P_{\omega}^{\Omega}$
in terms of the geometry of $\Omega$ (from which we will deduce \thmref{Sobolev-Lp_Lip_Bergman})
using pointwise estimates of the kernel of $P^{\widetilde{\Omega}}$
and \lemref{Relation-Bergman-omega-omega-tilde}. Hence we need, at
least, that the domain $\widetilde{\Omega}$ is ``completely geometrically
separated'' near the set $\left\{ \left(z,0\right)\in\widetilde{\Omega}\right\} $
and to have a precise comparison of the geometries of $\widetilde{\Omega}$
and $\Omega$. We are able to do this in the two following cases:
\begin{itemize}
\item $\Omega$ is a finite type domain in $\mathbb{C}^{2}$, $\rho=\rho_{s}$
satisfies (\ref{eq:hypothesis-defining-func}) and $h$ satisfies
\hyporef{Hypothesis-IV} of \secref{Hypothesis_on_function_h}
\item $\Omega$ is a convex domain of finite type in $\mathbb{C}^{n}$,
$\rho=g^{4}e^{1-\nicefrac{1}{g}}-1$ where $g$ is a gauge function
of $\Omega$, so that $\rho$ is convex and of finite type in a neighborhood
of $\partial\Omega$, and $h$ satisfies \hyporef{Hypothesis-V} of
\secref{Hypothesis_on_function_h}
\end{itemize}
(the weight $\omega$ being given by (\ref{eq:weight-on-Omega})).
Recall that for a convex domain containing the origin of $\mathbb{C}^{n}$,
the gauge relative to $0$ is defined by
\[
g(z)=\inf\left\{ t\geq0\mbox{ such that }z\in t\Omega\right\} .
\]
Then it is easy to see that $g$ is convex, smooth outside the origin
and of finite type in a neighborhood of $\partial\Omega$. Thus the
defining function we choose is smooth.

Note that, for the convex case, we are not able, in general, to get
complete geometric separation of $\widetilde{\Omega}$ near $\partial\Omega\times\left\{ 0\right\} $
if we use a Diederich-Forn\ae ss defining function (see \propref{Geometric-Separation-Omega-tilde-General}).
This property being indispensable in this section we need to use another
defining function defined using the gauge: doing this, we loose the
property of finite type everywhere on $\partial\widetilde{\Omega}$
and so the global properties of the Bergman projection $P^{\widetilde{\Omega}}$
but local estimates of the kernel of $P^{\widetilde{\Omega}}$ will
suffice for our purpose.

\subsection{\label{sec:C2-case}The case of finite type domains in $\mathbb{C}^{2}$}

\subsubsection{\label{sec:Precise-comparison-geometries-C2}Precise comparison between
the geometries of \texorpdfstring{$\Omega$}{Omega} and \texorpdfstring{$\widetilde{\Omega}$}{Omega-tilde}}

We assume that the defining function $\rho$ of $\Omega$ is $\rho=\rho_{s}$
where $\rho_{s}$ satisfies (\ref{eq:hypothesis-defining-func}) as
stated in \secref{Diederich-Fornaess-def-func} and that the function
$h$ satisfies \hyporef{Hypothesis-IV} of \secref{Hypothesis_on_function_h}
(so that $\widetilde{\Omega}$ is of finite type and has a Levi form
locally diagonalizable at every point of $\partial\widetilde{\Omega}$).

We use the notations of the proof of \thmref{C-2-rho-plus-h-Loc-Diag}
for $L$, $N$, $L_{1}$ and $W_{k}$, and let us denote by $\widetilde{N}$
the complex normal to the defining function $r$ of
$\widetilde{\Omega}$ (i.e. $\widetilde{N}\left(r\right)\equiv1$
in a neighborhood of the boundary of $\widetilde{\Omega}$). Moreover,
for the geometries, using the notation ``$F^{\Omega}\left(L,z,\delta)\right)$''
introduced in Section 2 of \cite{CD08}, we denote $\widetilde{F}_{1}\left(\widetilde{\zeta},\delta\right)=\delta^{-2}$,
$\widetilde{F}_{2}\left(\widetilde{\zeta},\delta\right)=F^{\widetilde{\Omega}}\left(L_{1},\widetilde{\zeta},\delta\right)$,
$\widetilde{F}_{i}\left(\widetilde{\zeta},\delta\right)=F^{\widetilde{\Omega}}\left(W_{i-2},\widetilde{\zeta},\delta\right)$
and $F_{L}\left(\zeta,\delta\right)=F^{\Omega}\left(L,\zeta,\delta\right)$.

Let us first compare the weights $\widetilde{F}_{i}\left(\widetilde{\zeta},\delta\right)$
and $F_{L}(\zeta,\delta)$ constructed with the extremal basis defined
in \secref{Geometric-separation}.
\begin{sllem}
\label{lem:Lemma-1_kernel_n=00003D2}We have:
\begin{enumerate}
\item $L_{1}=L-\sum\divideontimes W_{k}=L-\sum\divideontimes\frac{\partial}{\partial w_{i}}-\left(\sum\divideontimes\frac{\partial h_{i}}{\partial w_{i}}\right)N$,
where $\divideontimes$ are $\mathcal{C}^{\infty}$ functions;
\item $N=\beta\widetilde{N}+\sum\left(\divideontimes\frac{\partial h_{i}}{\partial\overline{w_{i}}}+\sum_{j>i}\divideontimes\left|\frac{\partial h_{j}}{\partial w_{j}}\right|^{2}\right)W_{i}$,
where $\beta$ and $\divideontimes$ are $\mathcal{C}^{\infty}$ functions,
$\beta\simeq1$ for $\left|w\right|$ small.
\end{enumerate}
\end{sllem}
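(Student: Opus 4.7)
The plan is to derive both formulas directly from the construction of $L_{1}$ and the vector fields $W_{k}$ carried out in the proof of \thmref{C-2-rho-plus-h-Loc-Diag}. Throughout, the symbol $\divideontimes$ denotes an unspecified $\mathcal{C}^{\infty}$ function whose precise identity is irrelevant.

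For part~(1), I would first note that the construction inside that proof already yields $L_{1}=L-\sum_{k=1}^{m}b_{k}W_{k}$ with each $b_{k}$ smooth, which is the first equality of the lemma. The key point is the expression $b_{k}=-[L,\overline{W_{k}}](\partial r)/[W_{k},\overline{W_{k}}](\partial r)$: the numerator has the form $\divideontimes\,\partial h_{k}/\partial\overline{w_{k}}$ by the bracket computations done inside the theorem's proof, the denominator equals $\gamma_{k}\,\partial^{2}h_{k}/(\partial w_{k}\partial\overline{w_{k}})$ with $|\gamma_{k}|\geq\nicefrac{1}{2}$, and \hyporef{Hypothesis-III} provides the crucial factorization $\partial h_{k}/\partial\overline{w_{k}}=\overline{\alpha_{k}}\,\partial^{2}h_{k}/(\partial w_{k}\partial\overline{w_{k}})$ with $\alpha_{k}$ smooth, making the ratio $b_{k}$ a smooth function. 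To pass to the fully expanded second equality, I would expand each $W_{k}$ by induction on $k$ via the recurrence $W_{k}=\partial/\partial w_{k}-(\partial h_{k}/\partial w_{k})N-\sum_{j<k}a_{k}^{j}W_{j}$, using $a_{k}^{j}=\divideontimes\,\partial h_{k}/\partial w_{k}$. Each resulting coefficient of $\partial/\partial w_{l}$ is a finite sum of smooth functions, hence $\divideontimes$, and the cumulative coefficient of $N$ takes the form $\sum_{i}\divideontimes\,\partial h_{i}/\partial w_{i}$.

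For part~(2), my plan is to work with the explicit representatives $\widetilde{N}=|\partial r|^{-2}\bigl(\sum_{j}\overline{\partial r/\partial z_{j}}\,\partial/\partial z_{j}+\sum_{k}\overline{\partial r/\partial w_{k}}\,\partial/\partial w_{k}\bigr)$ and $N=|\partial\rho|^{-2}\sum_{j=1,2}\overline{\partial\rho/\partial z_{j}}\,\partial/\partial z_{j}$, both compatible with the normalizations $\widetilde{N}(r)\equiv1$ and $N\rho\equiv1$. Since $\partial r/\partial z_{j}=\partial\rho/\partial z_{j}$ for $j=1,2$, a short computation yields
\[
N-\widetilde{N}=\lambda N-\sum_{k}\frac{\partial h_{k}/\partial\overline{w_{k}}}{|\partial r|^{2}}\frac{\partial}{\partial w_{k}},\qquad\lambda:=\frac{\sum_{k}|\partial h_{k}/\partial w_{k}|^{2}}{|\partial r|^{2}}.
\]
Substituting $\partial/\partial w_{k}=W_{k}+\sum_{j<k}a_{k}^{j}W_{j}+(\partial h_{k}/\partial w_{k})N$ and collecting the $N$-terms on the left, the factors of $(1-\lambda)$ cancel exactly, leaving $\beta=1$, which in particular is $\simeq1$ for $|w|$ small. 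Inspection of the remaining coefficient of $W_{i}$ gives a leading contribution $\divideontimes\,\partial h_{i}/\partial\overline{w_{i}}$ from the diagonal sum, together with cross-terms $a_{k}^{i}\cdot(\partial h_{k}/\partial\overline{w_{k}})/|\partial r|^{2}=\divideontimes|\partial h_{k}/\partial w_{k}|^{2}$ for $k>i$, matching the form stated in the lemma. No $L_{1}$-term arises because both $N$ and the $\partial/\partial z$-part of $\widetilde{N}$ lie in the direction $\overline{\partial\rho/\partial z}$, which is complementary to the complex-tangent direction $L$ in $\mathbb{C}^{2}$.

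The main technical nuisance will be the careful bookkeeping of the recursive substitutions for the $W_{k}$ in part~(1) and the triangular inversion in part~(2); the underlying computations are elementary and draw only on \hyporef{Hypothesis-III} and the Lie-bracket identities already established inside the proof of \thmref{C-2-rho-plus-h-Loc-Diag}, with no new analytic input required.
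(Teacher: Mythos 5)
Your proposal is correct and follows essentially the same route as the paper: part (1) by unwinding the recursive construction of $L_{1}$ and the $W_{k}$ (using \hyporef{Hypothesis-III} to see that the coefficients $b_{k}$ are smooth), and part (2) by writing $N$ and $\widetilde{N}$ explicitly in terms of $\overline{\nabla\rho}$ and $\overline{\nabla h}$, substituting $\frac{\partial}{\partial w_{k}}=W_{k}+\sum_{j<k}a_{k}^{j}W_{j}+\frac{\partial h_{k}}{\partial w_{k}}N$ and collecting the $N$-terms. Your observation that the $(1-\lambda)$ factors cancel exactly so that one may take $\beta=1$ is a harmless sharpening of the paper's $\beta\simeq1$.
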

\begin{proof}
Part (1) is a trivial consequence of the definitions of the vector
fields. Let us give some indications for part (2). We have
\[
\widetilde{N}=\frac{\overline{\nabla\rho}+\overline{\nabla h}}{\left|\nabla\rho\right|^{2}+\left|\nabla h\right|^{2}}\mbox{ and }N=\frac{\overline{\nabla\rho}}{\left|\nabla\rho\right|^{2}}.
\]
Thus
\[
N=\beta\widetilde{N}+\divideontimes\sum\frac{\partial h_{i}}{\partial\overline{w_{i}}}\left(Z_{i}+\frac{\partial h_{i}}{\partial\overline{w_{i}}}N\right),
\]
and
\[
\left(1-\sum\divideontimes\left|\frac{\partial h_{i}}{\partial w_{i}}\right|^{2}\right)N=\beta\widetilde{N}+\sum\left(\divideontimes\frac{\partial h_{i}}{\partial\overline{w_{i}}}+\sum_{j>i}\divideontimes\left|\frac{\partial h_{j}}{\partial w_{j}}\right|^{2}\right)W_{i}.
\]

\end{proof}

Now we apply this lemma to estimate the weights $\widetilde{F}_{i}\left(\widetilde{\zeta},\delta\right)$
for $\widetilde{\zeta}=\left(\zeta,0\right)\in\partial\widetilde{\Omega}$.

Denoting $c_{11}=\left[L,\overline{L}\right]\left(\partial\rho\right)$,
we have
\begin{eqnarray*}
\widetilde{c}_{11} & = & \left[L_{1},\overline{L_{1}}\right]\left(\partial r\right)\\
 & = & \left[L-\sum\divideontimes\frac{\partial}{\partial w_{i}}-\left(\sum\divideontimes\frac{\partial h_{i}}{\partial\overline{w_{i}}}\right)N,\overline{L}-\sum\divideontimes\frac{\partial}{\partial\overline{w_{i}}}-\left(\sum\divideontimes\frac{\partial h_{i}}{\partial\overline{w_{i}}}\right)\overline{N}\right]\left(\partial r\right)\\
 & = & c_{11}+\sum\divideontimes\frac{\partial h_{i}}{\partial w_{i}}+\sum\divideontimes\frac{\partial h_{i}}{\partial\overline{w_{i}}}.
\end{eqnarray*}
Then the order of the functions $h_{i}$ being greater
than the type of $\Omega$, it is obvious that $\widetilde{F}_{1}\left(\widetilde{\zeta},\delta\right)\simeq F_{1}\left(\zeta,\delta\right)$.

Furthermore, in the proof of \thmref{C-2-rho-plus-h-Loc-Diag}, we saw that
$\left[W_{k},\overline{W_{k}}\right]\left(\partial r\right)=\alpha_{k}\frac{\partial^{2}h_{k}}{\partial w_{k}\partial\overline{w_{k}}}$,
$\alpha_{k}\in\mathcal{C}^{\infty}$, $\left|\alpha_{k}\right|\geq\nicefrac{1}{2}$,
then, as $W_{k}=\frac{\partial}{\partial w_{k}}+\divideontimes N+\sum_{i<k}\divideontimes\frac{\partial}{\partial w_{i}}$
and
\[
\widetilde{F}_{k+2}=\sum_{\mathcal{L}\in\mathcal{L}_{k}}\left(\frac{\mathcal{L}\left(\left[W_{k},\overline{W_{k}}\right]\left(\partial r\right)\right)}{\delta}\right)^{\nicefrac{2}{\left|\mathcal{L}\right|+2}},
\]
it follows clearly that
\[
\widetilde{F}_{k+2}\left(\widetilde{\zeta},\delta\right)\simeq\left(\frac{1}{\delta}\right)^{\nicefrac{1}{q_{k}}}\lesssim\widetilde{F}_{1}\left(\widetilde{\zeta},\delta\right).
\]

\medskip{}

Finally, we compare the pseudo-distances in $\Omega$ and $\widetilde{\Omega}$.

Let $\gamma$ and $\widetilde{\gamma}$ be the respective pseudo-distances
in $\partial\Omega$ and $\partial\widetilde{\Omega}$ defined by
the exponential map of tangent vectors fields associated to extremal
basis (see \cite{CD08} and \cite[p. 75 and 100]{Charpentier-Dupain-Geometery-Finite-Type-Loc-Diag}).
Then
\begin{sllem}
\label{lem:Lemma-2_kernel_n=00003D2}With the above notations, $\gamma\left(z,\zeta\right)\simeq\widetilde{\gamma}\left(\widetilde{z},\widetilde{\zeta}\right)$,
for $z$ and $\zeta$ in $\partial\Omega$.\end{sllem}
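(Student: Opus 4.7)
I would use the characterization of $\gamma$ (resp.\ $\widetilde{\gamma}$) as a Carnot--Carath\'eodory type distance built from admissible concatenations of exponential flows along the extremal vector fields $L,\overline{L}$ on $\partial\Omega$ (resp.\ $L_{1},\overline{L_{1}},W_{k},\overline{W_{k}}$ on $\partial\widetilde{\Omega}$) with step sizes controlled by $F_{L}(z,\delta)^{-1/2}$ (resp.\ $\widetilde{F}_{1}(\widetilde{z},\delta)^{-1/2}$ and $\widetilde{F}_{k+2}(\widetilde{z},\delta)^{-1/2}$), as set up in \cite[pp.~75 and~100]{Charpentier-Dupain-Geometery-Finite-Type-Loc-Diag}. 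The equivalence $\gamma(z,\zeta)\simeq\widetilde{\gamma}(\widetilde{z},\widetilde{\zeta})$ then reduces to showing that admissible $\Omega$-paths lift to admissible $\widetilde{\Omega}$-paths joining the slice points, and conversely that admissible $\widetilde{\Omega}$-paths between slice points project to admissible $\Omega$-paths.

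The key quantitative input is already available: the previous computation gives $\widetilde{F}_{1}(\widetilde{\zeta},\delta)\simeq F_{L}(\zeta,\delta)$ and $\widetilde{F}_{k+2}(\widetilde{\zeta},\delta)\simeq\delta^{-1/q_{k}}$, and by \hyporef{Hypothesis-IV} we have $2q_{k}>M$. Consequently $\widetilde{F}_{k+2}^{-1/2}\simeq\delta^{1/(2q_{k})}$ is strictly larger (for small $\delta$) than any allowed $L$-step $F_{L}^{-1/2}\lesssim\delta^{1/M}$, so that small $w_{k}$-displacements are ``cheap'' at scale $\delta$.

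For the inequality $\widetilde{\gamma}((z,0),(\zeta,0))\lesssim\gamma(z,\zeta)$, I would lift an admissible $L$-path from $z$ to $\zeta$ at scale $\delta$ to a path in $\partial\widetilde{\Omega}$ starting at $(z,0)$ by replacing every elementary flow along $L$ (resp.\ $\overline{L}$) by the corresponding flow along $L_{1}$ (resp.\ $\overline{L_{1}}$). By \lemref{Lemma-1_kernel_n=00003D2}(1), $L_{1}-L$ is a combination of $\partial/\partial w_{k}$ and $(\partial h_{k}/\partial w_{k})\,N$ with smooth coefficients, so using $\partial h_{k}/\partial w_{k}(0)=0$ and a Gr\"onwall-type bookkeeping along the (finite) concatenation of flows, the endpoint is $(\zeta,w^{*})$ with $|w_{k}^{*}|\lesssim\delta^{1/(2q_{k})}$; this is exactly the size of one admissible $W_{k}$-step at scale $\delta$, so a bounded number of correcting $W_{k},\overline{W_{k}}$ flows brings us to $(\zeta,0)$ inside an admissible $\widetilde{\Omega}$-path of scale $\delta$.

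For the reverse inequality, given an admissible $\widetilde{\Omega}$-path at scale $\delta$ from $(z,0)$ to $(\zeta,0)$, I would project each elementary flow to the $z$-plane: using \lemref{Lemma-1_kernel_n=00003D2}(2) the $z$-component of $W_{k}$ equals $-(\partial h_{k}/\partial w_{k})\,N$, so $W_{k}$-flows contribute only $z$-displacements of order $|\partial h_{k}/\partial w_{k}|\lesssim|w_{k}|^{2q_{k}-1}\lesssim\delta^{(2q_{k}-1)/(2q_{k})}$ per step, a total negligible at scale $\delta$; the $L_{1}$-flows project to $L$-flows of the same prescribed step size up to small $w$-dependent corrections of the same negligible order. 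A small normal adjustment produces an admissible $L$-path in $\partial\Omega$ of scale $\delta$ from $z$ to $\zeta$.

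The main obstacle will be the first step: the quantitative bound $|w_{k}^{*}|\lesssim\delta^{1/(2q_{k})}$ after composing potentially many $L_{1}$-flows requires a careful iteration controlling the accumulation of $w$-drift, using both the vanishing of $\partial h_{k}/\partial w_{k}$ at $w=0$ and the hypothesis $2q_{k}>M$ to ensure that the correcting $W_{k}$-flows fit within admissibility. Essentially the same estimate controls the correction terms appearing in the projection, so once this is done both directions follow in parallel.
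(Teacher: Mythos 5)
Your second inequality, $\gamma(z,\zeta)\lesssim\widetilde{\gamma}(\widetilde{z},\widetilde{\zeta})$, is essentially the paper's argument: project an admissible $\widetilde{\Omega}$-curve to $\mathbb{C}^{2}$, bound $|w_{k}(t)|\lesssim\widetilde{\gamma}^{1/(2q_{k})}$ from the admissibility of the $W_{k}$-coefficients, and then control the contribution of the $W_{k}$-directions to the normal coefficients by $\left|\partial h_{k}/\partial w_{k}\right|\cdot\widetilde{F}_{k+2}^{-1/2}\lesssim\widetilde{\gamma}^{(2q_{k}-1)/(2q_{k})}\cdot\widetilde{\gamma}^{1/(2q_{k})}=\widetilde{\gamma}$. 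One small correction: these contributions are not ``negligible'' at scale $\delta$ --- they land exactly at the admissible normal size $G_{1}^{-1/2}=\delta$ --- but that is all the argument needs.

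The first inequality is where you diverge from the paper and where your proposal has a real gap: the bound $|w_{k}^{*}|\lesssim\delta^{1/(2q_{k})}$ on the accumulated $w$-drift after composing many $L_{1}$-flows is asserted, flagged by you as the main obstacle, and never established. The paper sidesteps this entirely by using the equivalent characterization of $\gamma$ via admissible curves $\varphi$ with $\varphi'=\sum a_{i}\mathcal{Y}_{i}$, $|a_{i}|\lesssim G_{i}^{-1/2}$: it takes the \emph{identical} curve $\widetilde{\varphi}=(\varphi,0)$ in $\mathbb{C}^{2+m}$ and merely re-expands its tangent vector in the basis $\widetilde{N},\widetilde{L}_{1},W_{k}$; the coefficient bounds transfer because $\widetilde{F}_{1}\simeq F_{1}\gtrsim\widetilde{F}_{k+2}$, so any $W_{k}$-coefficient of size $F_{1}^{-1/2}$ is automatically $\lesssim\widetilde{F}_{k+2}^{-1/2}$, and no correcting $W_{k}$-flows are needed. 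If you prefer the flow picture, your obstacle in fact dissolves: by the construction of $L_{1}$ in the proof of \thmref{C-2-rho-plus-h-Loc-Diag} together with \hyporef{Hypothesis-III}, the coefficients $b_{k}$ in $L_{1}=L-\sum b_{k}W_{k}$ are smooth multiples of $\overline{\alpha_{k}}$, hence vanish identically on $\{w=0\}$; consequently $L_{1}$, $\overline{L_{1}}$, $N$, $\overline{N}$ are all tangent to the slice $\{w=0\}$, their flows preserve it, and on the slice $L_{1}=L$, so the lift has zero drift and $w^{*}=0$. Either observation closes your gap, but as written the first direction is not yet a proof.
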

\begin{proof}
We use the notations introduced in \cite{Charpentier-Dupain-Geometery-Finite-Type-Loc-Diag}:
for $\Omega$, $\mathcal{Y}_{1}=\Re\mbox{e}N$, $\mathcal{Y}_{2}=\Im\mbox{m}N$,
$\mathcal{Y}_{3}=\Re\mbox{e}L_{1}$, $\mathcal{Y}_{4}=\Im\mbox{m}L_{1}$
and, for $\widetilde{\Omega}$, $\widetilde{\mathcal{Y}}_{1}=\Re\mbox{e}\widetilde{N}$,
$\widetilde{\mathcal{Y}}_{2}=\Im\mbox{m}\widetilde{N}$, $\widetilde{\mathcal{Y}}_{3}=\Re\mbox{e}\widetilde{L}_{1}$,$\widetilde{\mathcal{Y}}_{4}=\Im\mbox{m}\widetilde{L}_{1}$
and $\widetilde{\mathcal{Y}}_{2k+3}=\Re\mbox{e}W_{k}$, $\widetilde{\mathcal{Y}}_{2k+4}=\Im\mbox{m}W_{k}$.
Then, following \cite[p. 100]{Charpentier-Dupain-Geometery-Finite-Type-Loc-Diag},
there exists $\varphi\,:\left[0,1\right]\rightarrow\mathbb{C}^{2}$,
piecewise $\mathcal{C}^{1}$, such that $\varphi(0)=z$, $\varphi(1)=\zeta$,
$\varphi'(t)=\sum a_{i}(t)\mathcal{Y}_{i}(\varphi(t))$ a.e. with
$\left|a_{i}(t)\right|\lesssim G_{i}\left(z,\gamma\left(z,\zeta\right)\right)^{-\nicefrac{1}{2}}$,
where $G_{1}=G_{2}=\nicefrac{1}{\delta^{2}}$ and $G_{3}=G_{4}=F_{1}(z,\delta)$.
Now, let $\widetilde{\varphi}(t)=\left(\varphi(t),0\right)$ be the
same curve considered in $\mathbb{C}^{2+m}$. Then \lemref{Lemma-1_kernel_n=00003D2}
and $G_{1}(z,\delta)=\widetilde{G}_{1}\left(\widetilde{z},\delta\right)=\frac{1}{\delta^{2}}\gtrsim F_{1}(z,\delta)\simeq\widetilde{F}_{1}\left(\widetilde{z},\delta\right)\gtrsim\widetilde{F}_{k}\left(\widetilde{z},\delta\right)=\widetilde{G}_{2k+1}\left(\widetilde{z},\delta\right)=\widetilde{G}_{2k+2}\left(\widetilde{z},\delta\right),$
$k\geq2$) show that $\widetilde{\varphi}$ satisfies $\widetilde{\varphi}'(t)=\sum b_{i}(t)\widetilde{\mathcal{Y}}_{i}\left(\widetilde{\varphi}(t)\right)$
with $\left|b_{i}(t)\right|\lesssim\widetilde{G}_{i}\left(\widetilde{z},\gamma\left(z,\zeta\right)\right)^{-\nicefrac{1}{2}}$.
Thus
\[
\widetilde{\gamma}\left(\widetilde{z},\widetilde{\zeta}\right)\lesssim\gamma\left(z,\zeta\right).
\]

To show the converse inequality, consider a curve $\widetilde{\varphi}\,:\left[0,1\right]\rightarrow\mathbb{C}^{2+m}$
such that $\widetilde{\varphi}(0)=\widetilde{z}$, $\widetilde{\varphi}(1)=\widetilde{\zeta}$,
$\widetilde{\varphi}'(t)=\sum b_{i}(t)\widetilde{\mathcal{Y}}_{i}\left(\widetilde{\varphi}(t)\right)$
a.e. with $\left|b_{i}(t)\right|\lesssim\widetilde{G}_{i}\left(\widetilde{z},\widetilde{\gamma}\left(\widetilde{z},\widetilde{\zeta}\right)\right)^{-\nicefrac{1}{2}}$. 

First, we consider the component $\widetilde{\varphi}_{2+i}(t)$ of
the curve $\widetilde{\varphi}$. Let us decompose $\widetilde{\varphi}'$
on the basis $L$, $N$, $\frac{\partial}{\partial w_{i}}$ and their
conjugates. Then the coefficient $\widetilde{w}_{i}\left(\widetilde{\varphi}'(t)\right)$
of $\widetilde{\varphi}'(t)$ in the directions $\frac{\partial}{\partial w_{i}}$
or $\frac{\partial}{\partial\overline{w_{i}}}$ is, in modulus, bounded
from above by
\[
F_{1}\left(z,\widetilde{\gamma}\left(\widetilde{z},\widetilde{\zeta}\right)\right)^{-\nicefrac{1}{2}}+\widetilde{F}_{i}\left(\widetilde{z},\widetilde{\gamma}\left(\widetilde{z},\widetilde{\zeta}\right)\right)^{-\nicefrac{1}{2}}+\widetilde{\gamma}\left(\widetilde{z},\widetilde{\zeta}\right)\lesssim\widetilde{F}_{i}\left(\widetilde{z},\widetilde{\gamma}\left(\widetilde{z},\widetilde{\zeta}\right)\right)^{-\nicefrac{1}{2}}\lesssim\widetilde{\gamma}\left(\widetilde{z},\widetilde{\zeta}\right)^{\nicefrac{1}{q_{i}}},
\]
and thus, since $\widetilde{\varphi}_{2+i}(0)=0$,
\[
\left|\widetilde{\varphi}_{2+i}(t)\right|\lesssim\widetilde{\gamma}\left(\widetilde{z},\widetilde{\zeta}\right)^{\nicefrac{1}{q_{i}}}.
\]
Now let us denote $\varphi(t)=\left(\widetilde{\varphi}_{1}(t),\widetilde{\varphi}_{2}(t)\right)$
the projection of $\widetilde{\varphi}$ onto $\mathbb{C}^{2}$, and
let us write $\varphi'(t)=\sum_{i=1}^{4}c_{i}(t)\mathcal{Y}_{i}(\varphi(t))$.
We have to estimate the contribution of the coefficients $b_{j}$
to the coefficient $c_{i}$.

Suppose $j>4$. The contribution to $c_{3}$ and $c_{4}$ is null
and to $c_{1}$ and $c_{2}$ is bounded by $\frac{\partial h}{\partial w_{j'}}\widetilde{F}_{j'+2}\left(\widetilde{z},\widetilde{\gamma}\left(\widetilde{z},\widetilde{\zeta}\right)\right)^{-\nicefrac{1}{2}}$
(with an evident correspondence $j'\longleftrightarrow j$). As $\left|\frac{\partial h_{j'}}{\partial w_{j'}}\right|\lesssim\left|w_{j'}\right|^{2q_{j'}-1}\lesssim\widetilde{\gamma}\left(\widetilde{z},\widetilde{\zeta}\right)^{\frac{2q_{j'}-1}{2q_{j'}}}$,
this contribution is bounded by
\[
\widetilde{\gamma}\left(\widetilde{z},\widetilde{\zeta}\right)^{\frac{2q_{j'}-1}{2q_{j'}}}\widetilde{\gamma}\left(\widetilde{z},\widetilde{\zeta}\right)^{\frac{1}{2q_{j'}}}=\widetilde{\gamma}\left(\widetilde{z},\widetilde{\zeta}\right).
\]

If $j=4\mbox{ or }3$, the contribution to $c_{4}$ and $c_{3}$ is
bounded by $\widetilde{\gamma}\left(\widetilde{z},\widetilde{\zeta}\right)$,
and the contribution to $c_{1}$ and $c_{2}$ is bounded by 
\[
\sum\left|\frac{\partial h_{i}}{\partial w_{i}}\right|\widetilde{F}_{1}\left(\widetilde{z},\widetilde{\gamma}\left(\widetilde{z},\widetilde{\zeta}\right)\right)^{-\nicefrac{1}{2}}\lesssim\widetilde{\gamma}\left(\widetilde{z},\widetilde{\zeta}\right).
\]

When $j=1\mbox{ or }2$, the contribution is bounded by $\widetilde{\gamma}\left(\widetilde{z},\widetilde{\zeta}\right)$.

This proves the lemma.
\end{proof}

\subsubsection{\label{sec:C2-Pointwise-estimate-of-Bergman-kernel}Pointwise estimate
of the Bergman kernel}
\begin{stthm}
\label{thm:C-2-Estimates-Weighted-Bergman-Kernel}Assume that $\Omega$
is pseudo-convex of finite type in $\mathbb{C}^{2}$ and that the
hypotheses on $\rho$, $h$, stated at the beginning of the section
are satisfied. Let $L$ be the complex tangent vector field to $\rho$
defined by $L=\frac{\partial\rho}{\partial z_{2}}\frac{\partial}{\partial z_{1}}-\frac{\partial\rho}{\partial z_{1}}\frac{\partial}{\partial z_{2}}$
and $N$ be the normal one such that $N\rho\equiv1$ in a small neighborhood
$U$ of $\partial\Omega$. Let $\mathcal{L}$ be a list of vector
fields belonging to $\left\{ L,\overline{L},N,\overline{N}\right\} $.

Let us denote by $K_{\omega}^{\Omega}$ the Bergman kernel of $L_{\omega}^{2}(\Omega)$
for the weight $\omega$. Then for sufficiently close points $p_{1}$
and $p_{2}$ in $U$, we have the following estimate:
\begin{eqnarray*}
\left|\mathcal{L}K_{\omega}^{\Omega}\left(p_{1},p_{2}\right)\right| & \leq & C_{\left|\mathcal{L}\right|}\left(\frac{1}{\delta\left(p_{1},p_{2}\right)^{2}}\right)^{1+\nicefrac{l_{N}}{2}}F_{L}^{1+\nicefrac{l_{L}}{2}}\left(p_{1},\delta\left(p_{1},p_{2}\right)\right)\prod_{j=1}^{m}\left(\frac{1}{\delta\left(p_{1},p_{2}\right)}\right)^{\nicefrac{1}{q_{j}}}\\
 & \simeq & C_{\left|\mathcal{L}\right|}\frac{\left(\frac{1}{\delta\left(p_{1},p_{2}\right)^{2}}\right)^{\nicefrac{l_{N}}{2}}F_{L}^{\nicefrac{l_{L}}{2}}\left(p_{1},\delta\left(p_{1},p_{2}\right)\right)}{\mbox{Vol}_{\omega}\left(B\left(p_{1},\delta\left(p_{1},p_{2}\right)\right)\right)},
\end{eqnarray*}
where $F_{L}$ is the weight associated to $L$, $l_{L}$ (resp. $l_{N}$)
denotes the number of times $L$ or $\overline{L}$ (resp. $N$ or
$\overline{N}$) appears in the list $\mathcal{L}$, $\delta\left(p_{1},p_{2}\right)=\left|\rho\left(p_{1}\right)\right|+\left|\rho\left(p_{2}\right)\right|+\delta_{\Omega}\left(p_{1},p_{2}\right)$,
$\delta_{\Omega}\left(p_{1},p_{2}\right)=\gamma\left(\pi\left(p_{1}\right),\pi\left(p_{2}\right)\right)$,
$\gamma$ being the pseudo-distance on $\partial\Omega$, and $B\left(p_{1},\delta\left(p_{1},p_{2}\right)\right)$
the associated pseudo-ball, of the geometry describe in \cite{Charpentier-Dupain-Geometery-Finite-Type-Loc-Diag}
and $\mbox{Vol}_{\omega}$ denotes the volume with respect to the
measure $\omega(z)d\lambda(z)$.\end{stthm}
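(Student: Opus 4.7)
The plan is to derive the stated kernel estimate from the sharp pointwise estimates of the Bergman kernel of $\widetilde{\Omega}$, using the identity $K_{\omega}^{\Omega}(z,\zeta)=K^{\widetilde{\Omega}}\bigl((z,0),(\zeta,0)\bigr)$ from \lemref{Relation-Bergman-omega-omega-tilde}(4). Under \hyporef{Hypothesis-IV}, \corref{Global_finite-type_rho+h} and \thmref{C-2-rho-plus-h-Loc-Diag} together guarantee that $\widetilde{\Omega}$ is pseudo-convex, of finite type, and has a Levi form locally diagonalizable at every boundary point; in particular $\widetilde{\Omega}$ is completely geometrically separated and the pointwise kernel estimates of \cite{Charpentier-Dupain-Geometery-Finite-Type-Loc-Diag} apply. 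They yield, for any list $\widetilde{\mathcal L}$ in the extremal basis $(\widetilde N,L_1,W_1,\dots,W_m)$,
\[
\bigl|\widetilde{\mathcal L}\,K^{\widetilde\Omega}(\widetilde p_1,\widetilde p_2)\bigr|\lesssim \widetilde F_1^{1+\tilde l_{\widetilde N}/2}\,\widetilde F_2^{1+\tilde l_{L_1}/2}\prod_{k=1}^{m}\widetilde F_{k+2}^{1+\tilde l_{W_k}/2},
\]
evaluated at $\bigl(\widetilde p_1,\widetilde\delta(\widetilde p_1,\widetilde p_2)\bigr)$, where $\tilde l_V$ counts the number of occurrences of $V$ or $\bar V$ in $\widetilde{\mathcal L}$.

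The next step is to translate the operators $L,\bar L,N,\bar N$ acting on $z$-variables into this basis, using \lemref{Lemma-1_kernel_n=00003D2}. In a neighborhood of $\{w=0\}$,
\[
L=L_1+\sum_k\divideontimes\tfrac{\partial}{\partial w_k}+\Bigl(\sum_k\divideontimes\tfrac{\partial h_k}{\partial w_k}\Bigr)N,\qquad N=\beta\,\widetilde N+\sum_k\Bigl(\divideontimes\tfrac{\partial h_k}{\partial\bar w_k}+\divideontimes\Bigr)W_k.
\]
At a point $(z,0)$ the factors $\partial h_k/\partial w_k$ vanish, $\beta\simeq 1$, and $\partial/\partial w_k=W_k$, giving $L|_{(z,0)}=L_1|_{(z,0)}+\sum_k c_k(z)W_k|_{(z,0)}$ and $N|_{(z,0)}=\beta(z)\widetilde N|_{(z,0)}$, with smooth bounded coefficients. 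Iterating converts $\mathcal L K_\omega^\Omega(p_1,p_2)$ into a finite combination of expressions $\widetilde{\mathcal L}\,K^{\widetilde\Omega}\bigl((p_1,0),(p_2,0)\bigr)$ with $|\widetilde{\mathcal L}|\le|\mathcal L|$ and bounded coefficients; the remainder terms produced when a derivative falls on a coefficient vanishing on $\{w=0\}$ involve only lower-order derivatives of $K^{\widetilde\Omega}$ and are controlled by the same bound, since \hyporef{Hypothesis-IV} (high-order vanishing of $h_k$ at the origin) forces those coefficients to vanish to high order in $w$ as well. Moreover, because $2q_k$ is strictly larger than the type $M$, $\widetilde F_{k+2}\simeq\delta^{-1/q_k}\lesssim\delta^{-1/M}\lesssim F_L\simeq\widetilde F_2$, so $W_k$-derivative contributions are absorbed by those of $L_1$.

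The last step is the geometric comparison. Since $\widetilde\rho(z,0)=\rho(z)$, \lemref{Lemma-2_kernel_n=00003D2} gives $\widetilde\delta(\widetilde p_1,\widetilde p_2)\simeq\delta(p_1,p_2)$, while \secref{Precise-comparison-geometries-C2} provides $\widetilde F_1=\delta^{-2}$, $\widetilde F_2\simeq F_L$ and $\widetilde F_{k+2}\simeq\delta^{-1/q_k}$. Substituting these into the $\widetilde\Omega$-estimate produces the first form of the claimed bound with $l_L=\tilde l_{L_1}$ and $l_N=\tilde l_{\widetilde N}$. For the second form, the slice volume computation $\lambda\{w:h(w)<-\rho(z)\}\simeq|\rho(z)|^{\sum 1/q_k}$ (using $h_i(w_i)\asymp|w_i|^{2q_i}$) gives $\omega(z)\simeq|\rho(z)|^{\sum 1/q_k}$, hence on the pseudo-ball
\[
\mathrm{Vol}_\omega\bigl(B(p_1,\delta)\bigr)\simeq\delta^{\sum 1/q_k}\cdot\mathrm{Vol}\bigl(B(p_1,\delta)\bigr)\simeq\delta^{\,2+\sum 1/q_k}\,F_L(p_1,\delta)^{-1},
\]
which matches the volume of the pseudo-ball of $\widetilde\Omega$ centered at $(p_1,0)$, producing the second form of the bound.

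The main obstacle is the bookkeeping of iterated change-of-basis: one must verify that every commutator/remainder produced by differentiating coefficients that vanish on $\{w=0\}$ remains dominated by the leading term. This is ensured by the vanishing of $\partial h_k/\partial w_k$ and of the higher derivatives of $h_k$ at $0$ imposed by \hyporef{Hypothesis-IV}, together with the comparison $\widetilde F_{k+2}\lesssim\widetilde F_2$ coming from $q_k>M$, which together guarantee that each remainder carries strictly smaller weight than the leading $L_1$- or $\widetilde N$-derivative term.
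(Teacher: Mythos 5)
Your proposal is correct and follows essentially the same route as the paper: the identity $K_{\omega}^{\Omega}\left(p_{1},p_{2}\right)=K^{\widetilde{\Omega}}\left(\left(p_{1},0\right),\left(p_{2},0\right)\right)$, the sharp kernel estimates for the completely geometrically separated domain $\widetilde{\Omega}$, the basis and weight comparisons of Lemmas \ref{lem:Lemma-1_kernel_n=00003D2} and \ref{lem:Lemma-2_kernel_n=00003D2}, and the computation $\mathrm{Vol}_{\omega}\left(B\left(p_{1},\delta\right)\right)\simeq\delta^{2+\sum\nicefrac{1}{q_{i}}}F_{L}\left(p_{1},\delta\right)^{-1}$. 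You in fact spell out the change-of-basis bookkeeping for $\left|\mathcal{L}\right|\geq1$ that the paper only sketches (the intermediate bound $\delta^{-\nicefrac{1}{q_{k}}}\lesssim\delta^{-\nicefrac{1}{M}}$ should read $\delta^{-\nicefrac{1}{q_{k}}}\lesssim\delta^{-\nicefrac{2}{M}}\lesssim F_{L}$, using $2q_{k}>M$, but this does not affect the conclusion).
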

\begin{proof}
We use (3) of \lemref{Relation-Bergman-omega-omega-tilde} and the
sharp estimates on $K_{B}^{\widetilde{\Omega}}$ deduced from \cite{Charpentier-Dupain-Geometery-Finite-Type-Loc-Diag}.

Suppose $\left|\mathcal{L}\right|=0$. We have $\left|K_{B}^{\widetilde{\Omega}}\left(\widetilde{z},\widetilde{\zeta}\right)\right|\lesssim\mbox{Vol}\left(\widetilde{B}\left(\widetilde{z},\widetilde{\delta}\left(\widetilde{z},\widetilde{\zeta}\right)\right)\right)^{-1}$,
by \cite[Main Theorem on the Bergman kernel, part II, p. 77]{Charpentier-Dupain-Geometery-Finite-Type-Loc-Diag},
with, by \lemref{Lemma-2_kernel_n=00003D2}, 
\[
\widetilde{\delta}\left(\widetilde{z},\widetilde{\zeta}\right)=\left|r\left(\widetilde{z}\right)\right|+\left|r\left(\widetilde{\zeta}\right)\right|+\delta_{\widetilde{\Omega}}\left(\widetilde{z},\widetilde{\zeta}\right)\simeq\left|\rho(z)\right|+\left|\rho(\zeta)\right|+\delta_{\Omega}(z,\zeta)=:\delta,
\]
and, by \cite[Section 3]{Charpentier-Dupain-Geometery-Finite-Type-Loc-Diag},$\left|K_{B}^{\widetilde{\Omega}}\left(\widetilde{z},\widetilde{\zeta}\right)\right|\lesssim\mbox{Vol}\left(\widetilde{B}\left(\widetilde{\pi(z)},\widetilde{\delta}\left(\widetilde{z},\widetilde{\zeta}\right)\right)\right)^{-1}$.

The estimates of the functions $F_{i}$ and $\widetilde{F}_{i}$ in
the proofs of the lemmas show that
\[
\mbox{Vol}\left(\widetilde{B}\left(\widetilde{\pi(z)},\delta\right)\right)\simeq\delta^{2}\widetilde{F}_{1}\left(\widetilde{\pi(z)},\delta\right)\prod_{k=1}^{m}\delta^{\nicefrac{1}{q_{k}}}\simeq\delta^{2+\sum_{k}\nicefrac{1}{q_{k}}}F_{1}(z,\delta).
\]

To finish the proof, we have to estimate $\mbox{Vol}\left(B(z,\delta)\right)=\int_{B(z,\delta)}\omega(\xi)dV(\xi)$:
\begin{eqnarray*}
\omega(\xi) & = & \mbox{Vol}\left\{ w\mbox{ such that }h(w)<-\rho(\xi)\right\} \\
 & \simeq & \prod\mbox{Vol}\left\{ w_{i}\mbox{ such that }h_{i}\left(w_{i}\right)<\rho(\xi)\right\} \simeq\prod(-\rho(\xi))^{\sum\nicefrac{1}{q_{i}}}.
\end{eqnarray*}
Then, using the fact that $\xi\in B(z,\delta)$ and $\left|\rho(z)\right|\lesssim\delta$
imply $\left|\rho(\xi)\right|\lesssim\delta$ (see \cite{Charpentier-Dupain-Geometery-Finite-Type-Loc-Diag,CD08}),
we obtain
\[
\mbox{Vol}\left(B(z,\delta)\right)\simeq\delta^{2+\sum\nicefrac{1}{q_{i}}}F_{1}(z,\delta)^{-1},
\]
which finishes the proof when $\left\Vert \mathcal{L}\right\Vert =0$.

When $\left|\mathcal{L}\right|\geq1$, the proof is done similarly
using Lemmas \ref{lem:Lemma-1_kernel_n=00003D2} and \ref{lem:Lemma-2_kernel_n=00003D2}
and the inequalities on the derivatives of $K_{B}^{\widetilde{\Omega}}$
given in \cite{Charpentier-Dupain-Geometery-Finite-Type-Loc-Diag}.\end{proof}
\begin{rem*}
This proof easily generalizes in higher dimensions $n$ when the Levi
form of $\rho$ has a rank $\geq n-2$.
\end{rem*}

\subsection{\label{sec:convex-case}The case of convex domains of finite type
in $\mathbb{C}^{n}$}

Now we assume that the function $h$ satisfies \hyporef{Hypothesis-V}
of \secref{Hypothesis_on_function_h} (for example $h(w)=\sum\left|w_{i}\right|^{2q_{i}}$,
$w_{i}\in\mathbb{C}$).

\subsubsection{Choice of the defining function and geometry of \texorpdfstring{$\widetilde{\Omega}$}{Omega-tilde}}

Because of \remref{geom-sep-convex-domains}, we have to choose a
special defining function to obtain useful properties on $\widetilde{\Omega}$.

Let $g$ be the gauge function for $\Omega$. Then $\rho=g^{4}e^{1-\nicefrac{1}{g}}-1$
is a smooth convex defining function for $\Omega$ which is of finite
type in a neighborhood of $\partial\Omega$. Thus the domain $\widetilde{\Omega}=\left\{ \left(z,w\right)\in\mathbb{C}^{n}\times\mathbb{C}^{m}\mbox{ such that }\rho(z)+h(w)<0\right\} $
is smooth, convex and of finite type in a neighborhood of $\partial\Omega\times\left\{ 0\right\} $.

To get a useful estimate of the Bergman kernel of $\widetilde{\Omega}$,
we need a precise comparison between the geometries of $\partial\Omega$
and of $\partial\widetilde{\Omega}$ near the points of $\partial\Omega\times\left\{ 0\right\} $.

Let $P_{0}\in\partial\Omega$, $\widetilde{P_{0}}=\left(P_{0},0\right)\in\partial\widetilde{\Omega}$
and $\delta>0$ sufficiently small. We now investigate extremal bases
at $P_{0}$ and $\widetilde{P_{0}}$ (in the sense of \cite{McNeal-convexes-94,Hef04}).
\begin{sllem}
\label{lem:extremal-coord-syst-Omega-Omege-tilde_Convex}Let $\left(z_{1}^{\delta},\ldots,z_{n}^{\delta}\right)$
be a $\delta$-extremal coordinate system at $P_{0}$. Then it is
possible to choose a $\delta$-extremal coordinate system at $\widetilde{P_{0}}$,
$\left(\widetilde{z}_{1}^{\delta},\ldots,\widetilde{z}_{m+n}^{\delta}\right)$,
such that, for $1\leq i\leq n$, $\widetilde{z_{i}}^{\delta}=\left(z_{i}^{\delta},0\right)$.\end{sllem}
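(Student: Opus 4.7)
The plan is to build the extremal coordinate system at $\widetilde P_0$ concretely, placing the given system at $P_0$ in the first $n$ positions (embedded as $(z_i^\delta,0)$) and filling in the last $m$ positions inside the $w$-plane. Since $\nabla_w h(0)=0$, the complex gradient of $r=\rho+h$ at $\widetilde P_0$ is parallel to $\nabla\rho(P_0)$; in particular the complex normal at $\widetilde P_0$ lies entirely in the $z$-plane, and we may take $\widetilde z_1^\delta=(z_1^\delta,0)$, with $\widetilde\tau_1(\widetilde P_0,\delta)\asymp\tau_1(P_0,\delta)\asymp\delta$.

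For a unit vector $v=(v_z,v_w)\in\mathbb C^{n+m}$ tangent to $\partial\widetilde\Omega$ at $\widetilde P_0$, I estimate the extremal distance
\[
\widetilde\tau(\widetilde P_0,v,\delta)=\sup\bigl\{c>0:\sup_{|\zeta|<c}r(\widetilde P_0+\zeta v)<\delta\bigr\}
\]
using the decoupling $r(z,w)=\rho(z)+h(w)$, the convexity of $\rho$, and the positivity and plurisubharmonicity of $h$. Writing $v=au_z+bu_w$ with $|a|^2+|b|^2=1$ and $u_z,u_w$ unit in their respective factors, one verifies the two-sided estimate
\[
\widetilde\tau(\widetilde P_0,v,\delta)\asymp\min\bigl(\tau(P_0,u_z,\delta)/|a|,\ \tau^h(0,u_w,\delta)/|b|\bigr),
\]
with $\tau^h(0,u_w,\delta)\asymp\delta^{1/(2q_j)}$ when $u_w$ lies on the $w_j$-axis (by Condition V) and $\tau^h(0,u_w,\delta)\gtrsim\delta^{1/(2\min_j q_j)}$ in general.

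Now Condition V forces $2q_j>M$ (the type of $\Omega$) for every $j$, while the convex geometry of $\Omega$ gives $\tau(P_0,u_z,\delta)\le C\delta^{1/M}$ for every complex tangent direction $u_z$ at $P_0$. Hence, for $\delta$ small, every pure $w$-distance strictly exceeds every pure tangent $z$-distance, and the above min-formula prevents any mixed vector from producing a shorter $\widetilde\tau$ than some pure $z$-tangent vector. The standard inductive construction of a $\delta$-extremal system in the convex case (à la McNeal--Hefer) therefore picks $n$ directions inside the $z$-plane before selecting anything in the $w$-plane, and the values $\widetilde\tau_i(\widetilde P_0,\delta)$ so produced are comparable to $\tau_i(P_0,\delta)$. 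By the flexibility in that construction (directions with equivalent $\tau$-values may be interchanged up to comparable constants), we may take $\widetilde z_i^\delta=(z_i^\delta,0)$ for $1\le i\le n$, and complete the basis with any $\delta$-extremal system for $h$ at $0\in\mathbb C^m$.

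The main obstacle is justifying the mixed-direction bound in the second step: ruling out any cancellation between $\rho(P_0+\zeta a u_z)$ and $h(\zeta b u_w)$ that would make $\widetilde\tau(\widetilde P_0,v,\delta)$ smaller than the pure contributions suggest. This is handled by the convexity of $r$ (which forces $\sup_{|\zeta|\le c}r(\widetilde P_0+\zeta v)$ to be attained on the circle $|\zeta|=c$), together with $h\ge 0$ and the convexity of $\rho$ provided by the choice $\rho=g^4 e^{1-1/g}-1$; these prevent the two contributions from compensating and secure the displayed $\min$-estimate.
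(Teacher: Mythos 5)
Your argument is correct in substance but takes a genuinely different route from the paper's. You work with the directional functions $\tau(\cdot,v,\delta)$, prove a decoupling estimate $\widetilde{\tau}\bigl(\widetilde{P}_{0},v,\delta\bigr)\asymp\min\bigl(\tau\left(P_{0},u_{z},\delta\right)/\left|a\right|,\ \tau^{h}\left(0,u_{w},\delta\right)/\left|b\right|\bigr)$ for complex-tangent $v$, compare exponents (every $w$-direction has $\tau\asymp\delta^{1/2q_{j}}$ with $2q_{j}>2M$, hence is flatter than every complex-tangent $z$-direction), and then invoke the stability of extremal bases under replacing a direction by one with comparable $\tau$. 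The paper instead uses the nearest-point description of the coordinate system: it takes the exact closest point $\widetilde{P_{1}}=\left(Q_{1},w\right)$ of $\widetilde{H_{\delta}}$ to $\widetilde{P_{0}}$ in each successive orthogonal slice, notes $h(w)=\delta-\rho\left(Q_{1}\right)\asymp\sum\left|w_{i}\right|^{2q_{i}}$ so that $\left|w\right|^{2}\gtrsim\left(\delta-\rho\left(Q_{1}\right)\right)^{1/(\tau+1)}$, and gets a contradiction against a competitor in $\mathbb{C}^{n}\times\left\{ 0\right\} $ at distance $\lesssim\left(\delta-\rho\left(Q_{1}\right)\right)^{1/\tau}$ unless $w=0$. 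Both proofs rest on the same two ingredients (convexity rules out cancellation between $\rho$ and $h$; $q_{j}>M$ makes any $w$-displacement too expensive), but the paper's version shows the \emph{exact} minimizers lie in $\mathbb{C}^{n}\times\left\{ 0\right\} $ and so literally produces the system of the statement, whereas yours only shows that $\left(z_{i}^{\delta},0\right)$ is extremal up to uniform constants and therefore needs the Hefer-type equivalence of bases with comparable $\tau$'s to conclude. That is enough for everything the lemma is used for later (only $F_{i}\asymp\widetilde{F}_{i}$ matters), but it should be said explicitly.

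Two places need tightening. First, your ordering step (``the construction picks the $n$ directions in the $z$-plane before anything in the $w$-plane because the $w$-distances are larger'') is only valid for the minimizing, nearest-point construction that the paper actually uses; with McNeal's original maximizing convention the $w$-directions, having the \emph{largest} $\tau\asymp\delta^{1/2q_{j}}$, would be selected immediately after the normal and the indexing in the statement would come out permuted. You must fix the convention. Second, the inequality $\widetilde{\tau}\leq\tau^{h}\left(0,u_{w},\delta\right)/\left|b\right|$ in your min-formula requires $\rho\left(P_{0}+\zeta au_{z}\right)\geq0$; this holds because $u_{z}$ is complex tangent at the boundary point $P_{0}$ and $\rho$ is convex, so $\zeta\mapsto\rho\left(P_{0}+\zeta au_{z}\right)$ has a critical point, hence a minimum, at $\zeta=0$. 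This is exactly where the special convex defining function $\rho=g^{4}e^{1-1/g}-1$ enters and it deserves one explicit line in place of the general appeal to the maximum principle.
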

\begin{proof}
Let
\[
\widetilde{H_{\delta}}=\left\{ \widetilde{P}\in\mathbb{C}^{n+m}\mbox{ such that }r\left(\widetilde{P}\right)=\delta\right\} .
\]
Let $\widetilde{P_{1}}\in\widetilde{H_{\delta}}$ such that $\left|\widetilde{P_{1}}-\widetilde{P_{0}}\right|$
is the euclidean distance $d\left(\widetilde{P_{0}},\widetilde{H_{\delta}}\right)$
from $\widetilde{P_{0}}$ to $\widetilde{H_{\delta}}$. Let $Q_{1}$
be the projection of $\widetilde{P_{1}}$ to $\mathbb{C}^{n}$ so
that $\widetilde{P_{1}}=\left(Q_{1},w\right)$. We have 
\[
\delta-\rho\left(Q_{1}\right)=h(w)\asymp\sum\left|w_{i}\right|^{2q_{i}},
\]
and, by the condition on the $q_{i}$ (recall that $h$ satisfies
\hyporef{Hypothesis-V} and then $q_{i}>\mathrm{typ}(\Omega)=\tau$)
we obtain
\[
\left|w\right|^{2}\gtrsim\left(\delta-\rho\left(Q_{1}\right)\right)^{\nicefrac{2}{2\tau+2}}.
\]

On the other hand, the geometric properties of $\Omega$ show that
there exists
\[
Q_{2}\in H_{\delta}=\left\{ P\in\mathbb{C}^{n}\mbox{ such that }\rho(P)=\delta\right\} 
\]
such that the distance $d\left(Q_{2},H_{\delta}\right)$ from $Q_{2}$
to $H_{\delta}$ is less than $C\left(\delta-\rho\left(Q_{1}\right)\right)^{\nicefrac{1}{\tau}}$
(with a constant $C$ independent of $\delta$), and, by the definition of $\widetilde{P_{1}}$,
\[
\left|P_{1}-Q_{1}\right|^{2}+c\left(\delta-\rho\left(Q_{1}\right)\right)^{\nicefrac{2}{2\tau+2}}\leq\left(\left|P_{1}-Q_{1}\right|+C\left(\delta-\rho\left(Q_{1}\right)\right)^{\nicefrac{1}{\tau}}\right)^{2}
\]
which implies, for $\delta$ small enough (depending on $c$, $C$
and $\tau$, i.e. on $\Omega$), $\rho\left(Q_{1}\right)=\delta$
and we can choose $\widetilde{z}_{1}^{\delta}=\left(z_{1}^{\delta},0\right)$.

Define now $\widetilde{H}_{2,\delta}$ as the intersection of $\widetilde{H}_{\delta}$
with the affine complex space orthogonal to $\widetilde{z}_{1}^{\delta}$
passing through $\widetilde{P}_{0}$. Let $\widetilde{P}_{2}$ be
such that $\left|\widetilde{P}_{2}-\widetilde{P}_{0}\right|$ is the
euclidean distance from $\widetilde{P}_{0}$ to $\widetilde{H}_{2,\delta}$.
Let $w_{0}\in\partial\Omega$ and $U$ be a small neighborhood of
$w_{0}$. Arguing as before, it is easy to show that $\widetilde{P}_{2}\in\partial\Omega\times\left\{ 0\right\} $
and we can choose $\widetilde{z}_{2}^{\delta}=\left(z_{2}^{\delta},0\right)$.
The proof is finished by induction.\end{proof}
\begin{cor*}
Let $L_{1},\ldots,L_{n}$ be the $\delta$-extremal basis of vector
fields associated to the $\delta$-extremal coordinate system at the
point $P_{0}\in\partial\Omega\cap U$ defined in \lemref{extremal-coord-syst-Omega-Omege-tilde_Convex}
(see \cite{CD08}). Then the basis $\widetilde{L}_{1},\ldots,\widetilde{L}_{n+m}$
defined by:
\begin{enumerate}
\item for $1\leq i\leq n$, $L_{i}=\widetilde{L}_{i}$,
\item for $1\leq j\leq m$, $\widetilde{L}_{n+j}=\frac{\partial}{\partial w_{j}}-\beta_{n+j}\frac{\partial}{\partial Z_{1}}$,
where $Z_{1}$ is the complex normal to $\partial\Omega$ at the point
$w_{0}$, $\beta_{n+j}$ being so that $\widetilde{L}_{n+j}$ is tangent
to $\partial\widetilde{\Omega}$,
\end{enumerate}
is $\delta$-extremal at $\widetilde{P}_{0}$.\end{cor*}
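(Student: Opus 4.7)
The plan is to exhibit the basis $(\widetilde{L}_1,\ldots,\widetilde{L}_{n+m})$ as the one associated, via the construction in \cite{CD08,McNeal-convexes-94}, to the compatible $\delta$-extremal coordinate system $(\widetilde{z}_1^\delta,\ldots,\widetilde{z}_{n+m}^\delta)$ produced in \lemref{extremal-coord-syst-Omega-Omege-tilde_Convex}. Recall that once the extremal coordinates are fixed, the $\delta$-extremal vector fields are obtained by projecting each $\partial/\partial\widetilde{z}_i^\delta$ onto the complex tangent space of $\partial\widetilde{\Omega}$ along the complex normal direction to $r=\rho+h$; checking the statement then reduces to identifying this projection in the two cases $i\le n$ and $i>n$.

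For $1\le i\le n$, the lemma gives $\widetilde{z}_i^\delta=(z_i^\delta,0)$, so $\partial/\partial\widetilde{z}_i^\delta$ acts only on the $z$-variables. Since $r(z,w)=\rho(z)+h(w)$ and $\nabla h(0)=0$, any $(1,0)$-vector field in the $z$-variables tangent to $\rho$ at $P_0$ is automatically tangent to $r$ at $\widetilde{P}_0$. The projection therefore coincides with the extremal field of $\Omega$ itself, giving $\widetilde{L}_i=L_i$. For $1\le j\le m$, the extremal coordinate is along $w_j$, and the natural projection of $\partial/\partial w_j$ onto the complex tangent space of $r$ is $\partial/\partial w_j-\beta_{n+j}\partial/\partial Z_1$ with $\beta_{n+j}=(\partial h_j/\partial w_j)/(\partial\rho/\partial Z_1)$: this is exactly the expression in the statement, and by \hyporef{Hypothesis-V} it is smooth and of size $O(|w_j|^{2q_j-1})$ near $\widetilde{P}_0$. (Here $\partial/\partial Z_1$ is used in place of the full complex normal of $r$, which is a harmless substitution because $r$ and $\rho$ differ by $h(w)=O(|w|^{2q_{\min}})$.)

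The final point is to verify the extremal property of the weights. By \hyporef{Hypothesis-V} each $h_j(w_j)\asymp|w_j|^{2q_j}$ is a radial, strictly pluri-subharmonic function of one variable, so the $\delta$-extremal distance at $\widetilde{P}_0$ along $\widetilde{L}_{n+j}$ is $\asymp\delta^{1/(2q_j)}$. Since $q_j>\tau=\mathrm{typ}(\Omega)$, these weights are strictly smaller than any of the $\delta$-extremal distances in the $z$-directions (which are bounded below by $\delta^{1/(2\tau)}$), so the ordering required of a $\delta$-extremal basis is preserved and the $\widetilde{L}_{n+j}$ are correctly placed after the $\widetilde{L}_i=L_i$. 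The main technical hurdle will be quantifying the assertion that using $\partial/\partial Z_1$ rather than the true complex normal to $r$ does not spoil extremality at the sharp level of constants; this relies on the convexity and finite type of $\rho=g^4e^{1-1/g}-1$ near $\partial\Omega$ together with the strict inequality $q_j>\tau$, exactly as in the proof of the preceding lemma.
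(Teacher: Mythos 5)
Your argument is correct and follows essentially the same route as the paper's (very terse) proof: both identify the extremal vector fields as the tangential corrections $\partial/\partial\widetilde{z}_{i}^{\delta}-\beta_{i}\,\partial/\partial Z_{1}$ of the coordinate fields coming from \lemref{extremal-coord-syst-Omega-Omege-tilde_Convex}, note that for $i\leq n$ these corrections take place entirely in the $z$-variables (so $\widetilde{L}_{i}=L_{i}$, since a $(1,0)$-field with no $w$-components satisfies $Xr=X\rho$), and use the decoupled radial structure $h_{j}(w_{j})\asymp\left|w_{j}\right|^{2q_{j}}$ together with $q_{j}>\mathrm{typ}(\Omega)=\tau$ to place the $w_{j}$-directions last, ordered by increasing $q_{j}$. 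One small imprecision: to justify that ordering you need the distance $\asymp\delta^{\frac{1}{2q_{j}}}$ from $\widetilde{P}_{0}$ to the level set along the $w_{j}$-axis to \emph{exceed} every complex-tangential distance in the $z$-directions, and the relevant bound on the latter is from \emph{above} by $C\delta^{\frac{1}{\tau}}$ (finite type), not from below by $\delta^{\frac{1}{2\tau}}$ as you wrote -- the correct comparison $2q_{j}>\tau$ is exactly the one already carried out in the proof of the lemma you invoke, so the conclusion stands.
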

\begin{proof}
For the point (1), note that, for $i\geq2$, $L_{i}=\frac{\partial}{\partial z_{i}}-\beta_{i}\frac{\partial}{\partial Z_{1}}$
(see \cite[Section 7.1]{CD08}) (recall that $L_{1}=\widetilde{L}_{1}=N$).
For (2), without loss of generality, we can assume $q_{j+1}\geq q_{j}$,
$1\leq j\leq m-1$, and the result is trivial if $h_{i}\left(w_{i}\right)=\left|w_{i}\right|^{2q_{i}}$
and ``easy'' to prove in the general case using \cite{CD08}.
\end{proof}

Let $F_{i}$ and $\widetilde{F}_{i}$ be the weights defined with
the vector fields $L_{i}$ and $\widetilde{L}_{i}$. Then
\begin{sllem}
For $1\leq i\leq n$, $F_{i}\left(z,\delta\right)=\widetilde{F}_{i}\left(\widetilde{z},\delta\right)$
and, for $1\leq j\leq m$, $\widetilde{F}_{n+j}\left(\widetilde{z},\delta\right)\simeq\left(\frac{1}{\delta}\right)^{\nicefrac{1}{q_{j}}}$,
for $z\in U$ and $\widetilde{z}=\left(z,0\right)$.\end{sllem}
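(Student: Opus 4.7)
The plan is to establish both comparisons by expanding the extremal weight as introduced in \cite{CD08},
\[
\widetilde{F}_{i}(\widetilde{z},\delta)\simeq\sum_{\mathcal{L}}\left(\frac{|\mathcal{L}([\widetilde{L}_{i},\overline{\widetilde{L}_{i}}](\partial r))(\widetilde{z})|}{\delta}\right)^{\nicefrac{2}{|\mathcal{L}|+2}},
\]
and splitting the argument according to whether the index $i$ belongs to the ``old'' block $1\leq i\leq n$ or to the ``new'' block $i=n+j$.

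For the first range I would use that, by the preceding corollary, $\widetilde{L}_{i}=L_{i}$ acts only on the $z$-variables for $1\leq i\leq n$. Any iterated derivative of $\partial r=\partial\rho+\partial h$ along $\widetilde{L}_{1},\overline{\widetilde{L}_{1}},\ldots,\widetilde{L}_{n},\overline{\widetilde{L}_{n}}$ therefore splits into a $z$-part (depending only on $\rho$) plus a $w$-part (depending only on $h$), and the $w$-part vanishes identically at $w=0$. Hence the restriction of the sum to lists drawn from the old block, evaluated at $(z,0)$, reproduces exactly $F_{i}(z,\delta)$. The contributions of lists containing some $\widetilde{L}_{n+k}$ or its conjugate are controlled at $w=0$ by derivatives of $h_{k}$ at the origin; since $h_{k}\asymp|w_{k}|^{2q_{k}}$ vanishes to high order and $q_{k}$ is strictly larger than the type of $\Omega$, these contributions are dominated by positive powers of $\delta$ and cannot compete with the leading $F_{i}(z,\delta)$. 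The equality $\widetilde{F}_{i}(\widetilde{z},\delta)=F_{i}(z,\delta)$ follows.

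For the second range I would compute $[\widetilde{L}_{n+j},\overline{\widetilde{L}_{n+j}}](\partial r)$ using $\widetilde{L}_{n+j}=\partial/\partial w_{j}-\beta_{n+j}\partial/\partial Z_{1}$. The tangency requirement forces $\beta_{n+j}$ to be of size $\partial h_{j}/\partial w_{j}$ divided by $\partial\rho/\partial Z_{1}$, and Hypothesis V (the bound $|\partial^{2}h_{j}/\partial w_{j}^{2}|\leq\triangle h_{j}$ together with radiality) keeps the off-diagonal second derivatives under control. The bracket reads
\[
[\widetilde{L}_{n+j},\overline{\widetilde{L}_{n+j}}](\partial r)=\frac{\partial^{2}h_{j}}{\partial w_{j}\partial\overline{w_{j}}}+\divideontimes\,\frac{\partial h_{j}}{\partial w_{j}}+\divideontimes\,\frac{\partial h_{j}}{\partial\overline{w_{j}}}+\divideontimes\,\left|\frac{\partial h_{j}}{\partial w_{j}}\right|^{2}
\]
with $\mathcal{C}^{\infty}$ coefficients. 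Applying further lists and evaluating at $w=0$ selects only the diagonal derivatives $\partial^{a+1}\overline{\partial}^{b+1}h_{j}(0)$, which by $h_{j}\asymp|w_{j}|^{2q_{j}}$ and radiality vanish unless $a=b=q_{j}-1$ and are then of size $\asymp1$. The extremal list has length $2q_{j}-2$ and produces the contribution $(1/\delta)^{\nicefrac{2}{2q_{j}}}=(1/\delta)^{\nicefrac{1}{q_{j}}}$, whence the claimed estimate.

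The main obstacle will be the clean book-keeping of the cross-list contributions in both cases: lists mixing old and new vector fields in the first range, and lists mixing $\widetilde{L}_{n+j}$ with $\widetilde{L}_{n+k}$ for $k\neq j$ or with the $L_{i}$'s in the second. The two levers that make this tractable are already in the hypotheses: the vanishing order $2q_{j}>2\,\mathrm{typ}(\Omega)$ of each $h_{j}$ at the origin forces any mixed derivative at $w=0$ to be either zero or of lower order in $\delta$, and the tangency correction $\beta_{n+j}$ is itself $O(\partial h_{j}/\partial w_{j})$ rather than of unit size, so that interactions with the $z$-block remain subleading. Assembling these order counts finishes the proof.
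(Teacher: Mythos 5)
Your argument follows essentially the same route as the paper, whose own proof is a two-line remark: the first identity is read off from the corollary of \lemref{extremal-coord-syst-Omega-Omege-tilde_Convex} (namely $\widetilde{L}_{i}=L_{i}$ for $1\leq i\leq n$), and the second is the bracket computation already carried out in \secref{C2-case}, using that $\beta_{n+j}=-\left(\frac{\partial h_{j}}{\partial w_{j}}\right)/\left(\frac{\partial\rho}{\partial Z_{1}}\right)$ with $\frac{\partial\rho}{\partial Z_{1}}$ smooth and close to $1$, so that all correction terms are $\mathrm{O}\left(\frac{\partial h_{j}}{\partial w_{j}}\right)$. Your estimate of $\widetilde{F}_{n+j}$ (extremal list of length $2q_{j}-2$, the derivative $\partial^{q_{j}}\overline{\partial}^{q_{j}}h_{j}(0)\asymp1$ forced by radiality and $k_{j}(t)\asymp t^{2q_{j}}$, all shorter lists vanishing and all longer ones subordinate) is exactly what the paper intends. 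One point to tighten in the first half: in the definition of $F^{\Omega}\left(L,z,\delta\right)$ from \cite{CD08} the lists $\mathcal{L}$ are built from the single field $L$ and its conjugate only, so there are no mixed lists involving the $\widetilde{L}_{n+k}$ to control; and since $L_{i}$ differentiates only in the $z$-variables while $\partial h$ has only $dw$-components, one has $\left[\widetilde{L}_{i},\overline{\widetilde{L}_{i}}\right]\left(\partial r\right)=\left[L_{i},\overline{L_{i}}\right]\left(\partial\rho\right)$ identically (not merely at $w=0$), which yields the exact equality $F_{i}=\widetilde{F}_{i}$ asserted in the lemma. Your domination argument, which adds nonnegative extra terms and then bounds them, can only deliver $\simeq$ rather than $=$; this is harmless for every later application, but as written it proves a slightly weaker statement than the one claimed.
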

\begin{proof}
The first part is a trivial consequence of the preceding corollary
and the second is proved, as in the case of dimension $2$, noting
that $\beta_{n+j}=-\nicefrac{\frac{\partial h_{j}\left(w_{j}\right)}{\partial w_{j}}}{\frac{\partial\rho}{\partial Z_{1}}}$,
with $\frac{\partial\rho}{\partial Z_{1}}$ $\mathcal{C}^{\infty}$
and close to $1$ for $\delta$ small.
\end{proof}

\subsubsection{\label{sec:Convex-Pointwise-estimate-of-Berg-Ker}Pointwise estimate
of the Bergman kernel}
\begin{stthm}
\label{thm:Convex-Estimates-Weighted-Bergman-Kernel}Assume $\Omega$
is convex of finite type in $\mathbb{C}^{n}$ and that the hypothesis
on $\rho$, $h$, and $\omega$ stated at the beginning of the section
are satisfied. Let $w_{0}$ be a boundary point of $\Omega$ and $U$
a small neighborhood of $w_{0}$. Let $N$ be the complex normal to
$\partial\Omega$ (i.e. $N\rho\equiv1$ in a neighborhood $U$ of
$\partial\Omega$). Let $p_{1}$ and $p_{2}$ be two points in $U$
and $\delta_{\Omega}\left(p_{1},p_{2}\right)$ as in \thmref{C-2-Estimates-Weighted-Bergman-Kernel}.
Let $\left\{ L_{2},\ldots,L_{n}\right\} $ be a $\delta\left(p_{1},p_{2}\right)$-extremal
basis associated to $\rho$ at the point $p_{1}$ (with $\delta\left(p_{1},p_{2}\right)=\left|\rho\left(p_{1}\right)\right|+\left|\rho\left(p_{2}\right)\right|+\delta_{\Omega}\left(p_{1},p_{2}\right)$).
Let us denote $L_{1}=N$. Let $\mathcal{L}$ be a list of vector fields
belonging to $\left\{ L_{1},\overline{L_{1}},\ldots,L_{n},\overline{L_{n}},N,\overline{N}\right\} $.
Let $K_{\omega}^{\Omega}$ be the Bergman kernel of $L_{\omega}^{2}(\Omega)$
for the weight $\omega$. Then
\begin{eqnarray*}
\left|\mathcal{L}K_{\omega}^{\Omega}\left(p_{1},p_{2}\right)\right| & \leq & C_{\left|\mathcal{L}\right|}\left(\frac{1}{\delta\left(p_{1},p_{2}\right)^{2}}\right)^{1+\nicefrac{l_{N}}{2}}\mathcal{F}^{1+\nicefrac{\mathcal{L}}{2}}\left(p_{1},\delta\left(p_{1},p_{2}\right)\right)\prod_{j=1}^{m}\left(\frac{1}{\delta\left(p_{1},p_{2}\right)}\right)^{\nicefrac{1}{q_{j}}}\\
 & \simeq & C_{\left|\mathcal{L}\right|}\frac{\left(\frac{1}{\delta\left(p_{1},p_{2}\right)^{2}}\right)^{\nicefrac{l_{N}}{2}}\mathcal{F}^{\nicefrac{\mathcal{L}}{2}}\left(p_{1},\delta\left(p_{1},p_{2}\right)\right)}{\mbox{Vol}_{\omega}\left(B\left(p_{1},\delta\left(p_{1},p_{2}\right)\right)\right)},
\end{eqnarray*}
where $l_{N}$ denotes the number of times $N$ or $\overline{N}$
appears in the list $\mathcal{L}$, $\mathcal{F}^{1+\nicefrac{\mathcal{L}}{2}}\left(p_{1},\delta\left(p_{1},p_{2}\right)\right)=\prod_{i=2}^{n}\mathcal{F}_{L_{i}}^{1+\nicefrac{l_{i}}{2}}$,
$l_{i}$ being the number of times $L_{i}$ or $\overline{L_{i}}$
appears in the list $\mathcal{L}$ and $\mbox{Vol}_{\omega}$ denotes
the volume with respect to the measure $\omega(z)d\lambda(z)$.\end{stthm}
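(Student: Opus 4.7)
The plan is to mirror the proof of \thmref{C-2-Estimates-Weighted-Bergman-Kernel}, replacing the $\mathbb{C}^{2}$ geometric ingredients by their convex-domain analogues. By (4) of \lemref{Relation-Bergman-omega-omega-tilde}, we have $K_{\omega}^{\Omega}(p_{1},p_{2})=K_{B}^{\widetilde{\Omega}}(\widetilde{p_{1}},\widetilde{p_{2}})$ with $\widetilde{p_{i}}=(p_{i},0)$. Since $\widetilde{\Omega}$ is smooth, convex and of finite type in a neighborhood of $\partial\Omega\times\{0\}$, the sharp pointwise estimates of the Bergman kernel of convex finite type domains obtained by McNeal and Hefer (applied locally near $\widetilde{p_{1}}$) give, for $|\mathcal{L}|=0$,
\[
\bigl|K_{B}^{\widetilde{\Omega}}(\widetilde{p_{1}},\widetilde{p_{2}})\bigr|\lesssim\mbox{Vol}\bigl(\widetilde{B}(\widetilde{p_{1}},\widetilde{\delta}(\widetilde{p_{1}},\widetilde{p_{2}}))\bigr)^{-1},
\]
together with the analogous inequalities for derivatives along the $\widetilde{\delta}$-extremal basis.

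The first step is the pseudo-distance comparison $\widetilde{\delta}(\widetilde{p_{1}},\widetilde{p_{2}})\simeq\delta(p_{1},p_{2})$. This is the analogue of \lemref{Lemma-2_kernel_n=00003D2}: the inequality $\widetilde{\delta}\lesssim\delta$ follows by lifting a minimizing horizontal curve for $\delta$ on $\Omega$ to $\widetilde\Omega$ (with zero $w$-component), using that the $w$-directions only add extra admissible speeds; the reverse inequality comes from projecting a curve in $\widetilde\Omega$ to $\Omega$ and checking, exactly as in the $n=2$ case, that the $w_{j}$-components satisfy $|w_{j}(t)|\lesssim\widetilde{\delta}^{1/(2q_{j})}$, so that the coefficients $\partial h_{j}/\partial w_{j}=O(\widetilde{\delta}^{(2q_{j}-1)/(2q_{j})})$ produce at worst an $O(\widetilde{\delta})$ contribution in the $L_{i}$-directions. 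This uses crucially that $q_{j}$ is strictly larger than the type of $\Omega$.

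The second step is to relate the volumes. The corollary to \lemref{extremal-coord-syst-Omega-Omege-tilde_Convex} together with the subsequent lemma gives $\widetilde{F}_{i}=F_{i}$ for $1\leq i\leq n$ and $\widetilde{F}_{n+j}\simeq\delta^{-1/q_{j}}$, so with $\tau_{i}=F_{i}^{-1/2}$,
\[
\mbox{Vol}\bigl(\widetilde{B}(\widetilde{p_{1}},\delta)\bigr)\simeq\prod_{i=1}^{n}\tau_{i}(p_{1},\delta)^{2}\cdot\prod_{j=1}^{m}\delta^{1/q_{j}}.
\]
On the other hand, since $h$ is a sum of radial pieces with $h_{i}\asymp|w_{i}|^{2q_{i}}$, $\omega(\xi)\simeq(-\rho(\xi))^{\sum 1/q_{j}}$, and the standard fact $|\rho(\xi)|\lesssim\delta$ for $\xi\in B(p_{1},\delta)$ yields $\mbox{Vol}_{\omega}(B(p_{1},\delta))\simeq\prod_{i}\tau_{i}(p_{1},\delta)^{2}\cdot\delta^{\sum 1/q_{j}}$, matching $\mbox{Vol}(\widetilde{B}(\widetilde{p_{1}},\delta))$ and giving the theorem for $|\mathcal{L}|=0$.

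The remaining step is the case $|\mathcal{L}|\geq 1$. The corollary of \lemref{extremal-coord-syst-Omega-Omege-tilde_Convex} shows that each $L_{i}$ on $\Omega$ coincides with $\widetilde{L}_{i}$ (for $2\leq i\leq n$) and $L_{1}=N$ differs from $\widetilde{N}$ only by vector fields of the form $(\partial h_{j}/\partial\overline{w_{j}})\widetilde{L}_{n+j}$, whose coefficients are $O(\delta^{(2q_{j}-1)/(2q_{j})})$ and therefore absorbable (this is the analogue of \lemref{Lemma-1_kernel_n=00003D2}); so applying $\mathcal L$ to $K_\omega^\Omega$ corresponds, via the kernel identity, to applying a list of $\widetilde{L}_{i}$'s and $\widetilde N$'s to $K_{B}^{\widetilde{\Omega}}$, and McNeal/Hefer-type derivative bounds give the right powers of $\tau_{i}^{-1}$ and $\delta^{-1}$. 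The main obstacle is the second step of the comparison of the pseudo-distances $\widetilde\delta$ and $\delta$: one has to check that the new vertical vector fields $\widetilde{L}_{n+j}$ cannot provide shortcuts in $\widetilde\Omega$ that are not reflected in the $\Omega$-geometry, which is exactly where the condition $q_{j}>\mathrm{typ}(\Omega)$ in \hyporef{Hypothesis-V} is used.
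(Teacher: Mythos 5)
Your proof is correct and takes essentially the same route as the paper's, which simply declares the estimate ``immediate'' from the kernel identity of \lemref{Relation-Bergman-omega-omega-tilde}, the identification $L_{i}=\widetilde{L}_{i}$ from the corollary of \lemref{extremal-coord-syst-Omega-Omege-tilde_Convex}, and the resulting coincidence of the pseudo-distances $\delta_{\Omega}$ and $\delta_{\widetilde{\Omega}}$. Your write-up merely fills in the distance, volume and derivative comparisons (following the template of Lemmas \ref{lem:Lemma-1_kernel_n=00003D2} and \ref{lem:Lemma-2_kernel_n=00003D2}) that the paper leaves implicit.
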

\begin{proof}
The construction made before shows that the estimate is immediate
because, the exponential map being a local diffeomorphism (\cite[p. 75]{Charpentier-Dupain-Geometery-Finite-Type-Loc-Diag}),
the fact that $L_{i}=\widetilde{L}_{i}$, $1\leq i\leq n$ (corollary
of \lemref{extremal-coord-syst-Omega-Omege-tilde_Convex}), implies
$\delta_{\Omega}\left(p_{1},p_{2}\right)=\delta_{\widetilde{\Omega}}\left(p_{1},p_{2}\right)$.
\end{proof}

\subsection{\label{sec:Proof-of-Main-Theorem}Proof of \texorpdfstring{\thmref{Sobolev-Lp_Lip_Bergman}}{Theorem 1.1}}

In the two cases we consider here, $\partial\widetilde{\Omega}$ is
of finite type at every point of the form $\left(z,0\right)$. Then,
by Catlin's theorem (\cite{Cat87}), the results of \cite{Kohn-Nirenberg-1965}
show that the Neumann operator of $\widetilde{\Omega}$ is pseudolocal
at these points, and, the method introduced by N. Kerzman in \cite{Kerzman-Bergman}
proves that the restriction of the Bergman kernel of $\widetilde{\Omega}$
to $\left(\overline{\Omega}\times\left\{ 0\right\} \right)^{2}$ is
$\mathcal{C}^{\infty}$ outside the diagonal of $\left(\partial\Omega\times\left\{ 0\right\} \right)^{2}$.
Thus, the identity $K_{\omega}^{\Omega}\left(p_{1},p_{2}\right)=K^{\widetilde{\Omega}}\left(\left(p_{1},0\right),\left(p_{2},0\right)\right)$
implies that the estimates of Theorems \ref{thm:C-2-Estimates-Weighted-Bergman-Kernel}
and \ref{thm:Convex-Estimates-Weighted-Bergman-Kernel} are valid
everywhere.

These estimates, the hypothesis on $h$ (i.e. $h(w)\asymp\sum\left|w_{i}\right|^{2q_{i}}$),
an immediate generalization of Proposition 2.1 of \cite{BCG96} and
a standard application of H\"older inequality imply that $P_{\omega}^{\Omega}$
maps $L^{p}\left(\Omega,(-\rho)^{\alpha}d\lambda\right)$
continuously into itself for $-1<\alpha<p\left(1+\sum\frac{1}{q_{i}}\right)-1$.

The Lipschitz estimate is also standard.

Now, choosing the special function $h(w)=\sum\left|w_{i}\right|^{2q_{i}}$,
$w_{i}\in\mathbb{C}$, the weight $\omega$ is equal to $C(-\rho)^{\sum\nicefrac{1}{q_{i}}}$,
and \thmref{Sobolev-Lp_Lip_Bergman} follows.
\begin{rem*}
Note that same method gives trivially \thmref{Sobolev-Lp_Lip_Bergman}
for pseudo-convex decoupled domains of finite type in $\mathbb{C}^{n}$.
\end{rem*}

\bibliographystyle{amsalpha}

\providecommand{\bysame}{\leavevmode\hbox to3em{\hrulefill}\thinspace}
\providecommand{\MR}{\relax\ifhmode\unskip\space\fi MR }
\providecommand{\MRhref}[2]{%
  \href{http://www.ams.org/mathscinet-getitem?mr=#1}{#2}
}
\providecommand{\href}[2]{#2}
\begin{thebibliography}{}

\end{thebibliography}


\begin{thebibliography}{NRSW89}

\bibitem[BC00]{BC00}
Bo~Berndtsson and Philippe Charpentier, \emph{A {S}obolev mapping property of
  the {B}ergman kernel}, Math. Z. \textbf{235} (2000), no.~1, 1--10.

\bibitem[BCG96]{BCG96}
A.~Bonami, D.~Chang, and S.~Grellier, \emph{Commutation properties and
  {L}ipschitz estimates for the {B}ergman and {S}zeg\"o projections}, Math. Z.
  \textbf{223} (1996), 275--302.

\bibitem[BG95]{BG95}
A.~Bonami and S.~Grellier, \emph{Weighted {B}ergman projections in domains of
  finite type in $\mathbb{C}^2$}, Contemp. Math. \textbf{189} (1995), 65--80.

\bibitem[BS91]{Boas-Straube-def-psh91}
Harold~P. Boas and Emil~J. Straube, \emph{Sobolev estimates for the
  $\overline\partial$-{N}eumann operator on domains in $\mathbb{C}^n$ admitting
  a defining function that is plurisubharmonic on the boundary}, Math. Z.
  \textbf{206} (1991), no.~1, 81--88.

\bibitem[Cat87]{Cat87}
David Catlin, \emph{Subelliptic estimates for the $\overline\partial$-{N}eumann
  problem on pseudoconvex domains}, Ann. of Math. \textbf{126} (1987), no.~1,
  131--191.

\bibitem[CD06a]{Charpentier-Dupain-Szego-Barcelone}
P.~Charpentier and Y.~Dupain, \emph{Estimates for the {B}ergman and {S}zeg\"o
  projections for pseudo-convex domains of finite type with locally
  diagonalizable {L}evi form}, Publications Mathem\`atiques \textbf{50} (2006),
  no.~2, 413--446.

\bibitem[CD06b]{Charpentier-Dupain-Geometery-Finite-Type-Loc-Diag}
\bysame, \emph{Geometry of {P}seudo-convex {D}omains of {F}inite {T}ype with
  {L}ocally {D}iagonalizable {L}evi {F}orm and {B}ergman {K}ernel}, Jour. Math.
  Pures et Appl. \textbf{85} (2006), 71--118.

\bibitem[CD08]{CD08}
\bysame, \emph{Extremal {B}asis, {G}eometrically {S}eparated {D}omains and
  {A}pplications}, http://fr.arxiv.org/abs/0810.1884 (2008).

\bibitem[Cha80]{Cha80}
P.~Charpentier, \emph{Solutions minimales de l'\'equation $\bar\partial u=f$
  dans la boule et le polydisque}, Ann. Inst. Fourier \textbf{30} (1980),
  121--153.

\bibitem[Cho96]{Cho-Bergman-96}
S.~Cho, \emph{Estimates of the {B}ergman kernel function on certain
  pseudoconvex domains in $\mathbb{C}^n$}, Math Z. \textbf{222} (1996),
  329--339.

\bibitem[Cho03]{Cho-03-Bergman-comparable-Math-Anal-Appl}
\bysame, \emph{Boundary behavior of the {B}ergman kernel function on
  pseudoconvex domains with comparable {L}evi form}, J. Math. Anal. Appl.
  \textbf{283} (2003), 386--397.

\bibitem[Chr96]{Christ-96}
M.~Christ, \emph{Global $\mathcal{C}^\infty$ irregularity of the
  $\overline\partial$-{N}eumann problem for worm domains}, J. Amer. Math. Soc.
  \textbf{9} (1996), no.~4, 1171--1185.

\bibitem[CL97]{CDC97}
{Chang D. C.} and {Li B. Q.}, \emph{Sobolev and {L}ipschitz estimates for
  weighted {B}ergman projections}, Nagoya Math. J. \textbf{147} (1997),
  147--178.

\bibitem[CNS92]{Chang-Nagel-Stein}
D.~C. Chang, A.~Nagel, and E.~Stein, \emph{Estimates for the
  $\bar\partial$-{N}eumann problem for pseudoconvex domains in $\mathbb{C}^2$
  of finite type}, Acta Math. \textbf{169} (1992), 153--228.

\bibitem[Cum90]{Cum90}
A.~Cumenge, \emph{Estimations limites pour la solution canonique de
  l'\'equation $\bar\partial u=f$}, Math. Ann. \textbf{286} (1990), no.~4,
  639--654.

\bibitem[DF77a]{DF77-Strict-Psh-Exhau-Func-Inventiones}
K.~Diederich and J.~E. Forn\ae ss, \emph{Pseudoconvex {D}omains : {B}ounded
  {S}trictly {P}lurisubharmonic {E}xhaustion {F}unctions}, lnventiones math.
  \textbf{39} (1977), 129--141.

\bibitem[DF77b]{DF77-nebenhuelle}
\bysame, \emph{Pseudoconvex domains: {A}n example with nontrivial nebenhuelle},
  Math. Ann. \textbf{225} (1977), 275--292.

\bibitem[FR75]{FR75}
F.~Forelli and W.~Rudin, \emph{Projections on {S}paces of {H}olomorphic
  {F}unctions in {B}alls}, Indiana Univ. Math. J. \textbf{24} (1975), no.~6,
  593--602.

\bibitem[Hef04]{Hef04}
T.~Hefer, \emph{Extremal bases and {H}\"older {E}stimates for $\bar\partial$ on
  {C}onvex {D}omains of {F}inite {T}ype}, Michigan Math. J. \textbf{52} (2004),
  573--602.

\bibitem[H{\"o}r65]{Hormander-L2-estimates}
L.~H{\"o}rmander, \emph{${L}^2$ estimates and existence theorems for the
  $\bar\partial$-operator}, Acta Math. \textbf{113} (1965), 89--152.

\bibitem[HP84a]{HP84b}
R.~Harvey and J.~Polking, \emph{The $\bar\partial$-{N}eumann kernel in the ball
  in $\mathbb{C}^n$}, Proc. Sympos. Pure Math. \textbf{41} (1984), 117--136.

\bibitem[HP84b]{HP84}
\bysame, \emph{The $\bar\partial$-{N}eumann solution to the inhomogeneous
  {C}auchy-{R}iemann equation in the ball in $\mathbb{C}^n$}, Trans. Amer.
  Math. Soc. \textbf{281} (1984), no.~2, 587--613.

\bibitem[Ker72]{Kerzman-Bergman}
N.~Kerzman, \emph{The {B}ergman kernel function. {D}ifferentiability at the
  boundary}, Math. Ann. \textbf{195} (1972), 149--158.

\bibitem[KN65]{Kohn-Nirenberg-1965}
J.~J. Kohn and L.~Nirenberg, \emph{Non coercive boundary value problems}, Comm.
  Pure Appl. Math. \textbf{18} (1965), 443--492.

\bibitem[Koh73]{Kohn-defining-function}
J.~J. Kohn, \emph{Global regularity for $\bar\partial$ on weakly pseudo-convex
  manifolds}, Trans. Amer. Math. Soc. \textbf{181} (1973), 273--292.

\bibitem[Lig89]{Lig89}
E.~Ligocka, \emph{On the {F}orelli-{R}udin construction and weighted {B}ergman
  projections}, Studia Math. \textbf{94} (1989), no.~3, 257--272.

\bibitem[LR86]{LR86}
I.~Lieb and M.~Range, \emph{Integral representations and estimates in the
  theory of the $\bar\partial$-{N}eumann problem.}, Ann. of Math. \textbf{123}
  (1986), no.~2, 265--301.

\bibitem[LR87]{LRM87}
\bysame, \emph{The kernel of the $\bar\partial$-{N}eumann operator on strictly
  pseudoconvex domains}, Math. Ann. \textbf{278} (1987), no.~1-4, 151--173.

\bibitem[LR88]{LR88}
\bysame, \emph{The $\bar\partial$-{N}eumann kernel for codimension-one forms on
  strictly pseudoconvex domains}, Math. Notes, Princeton Univ. Press
  \textbf{38} (1988), 473--782.

\bibitem[McN94]{McNeal-convexes-94}
J.~McNeal, \emph{Estimates on {B}ergman {K}ernels of {C}onvex {D}omains},
  Advances in Math (1994), 108--139.

\bibitem[MS94]{McNeal-Stein-Bergman}
J.~D. McNeal and E.~M. Stein, \emph{Mapping properties of the {B}ergman
  projection on convex domains of finite type}, Duke Math. J. \textbf{73}
  (1994), no.~1, 177--199.

\bibitem[MS97]{McNeal-Stein-Szego}
\bysame, \emph{The {S}zeg\"o projection on convex domains}, Math. Z.
  \textbf{224} (1997), no.~4, 519--553.

\bibitem[NRSW89]{N-R-S-W-Bergman-dim-2}
A.~Nagel, J.~P. Rosay, E.~M. Stein, and S.~Wainger, \emph{Estimates for the
  {B}ergman and {S}zeg\"o kernels in $\mathbb{C}^2$}, Annals of Math.
  \textbf{129} (1989), 113--147.

\bibitem[PW90]{Pas90}
Z.~Pasternak-Winiarski, \emph{On the {D}ependance of the {R}eproducing {K}ernel
  on the {W}ieght of {I}ntegration}, Jour. Func. Anal. \textbf{94} (1990),
  110--134.

\bibitem[Rud80]{rudin-unit-ball}
W.~Rudin, \emph{Function theory in the unit ball of $\mathbb{C}^n$}, Springer
  Verlag, 2008. - (Grundlehren der Mathematischen Wissenschaften; 241), 1980.

\bibitem[Str10]{Str10}
E.~Straube, \emph{Lectures on the ${L}^2$ {S}obolev theory on the -{N}eumann
  problem}, European Mathematical Society, 2010, 2010.

\bibitem[Zeya]{Zeyb}
Y.~Zeytuncu, \emph{${L}^p$ regularity of some weighted {B}ergman projections on
  the unit disc}, Turk. J. Math. To appear.

\bibitem[Zeyb]{Zey}
\bysame, \emph{${L}^p$ {R}egularity of weighted {B}ergman {P}rojections}, Tran.
  of Amer. Math. Soc. To appear.

\bibitem[Zey11a]{Zey11}
\bysame, \emph{Sobolev regularity of weighted {B}ergman projections on the unit
  disc}, Complex Variables and Elliptic Equations (2011).

\bibitem[Zey11b]{Zey11b}
\bysame, \emph{Weighted {B}ergman projections and kernels: ${L}^p$ regularity
  and zeros}, Proc. Amer. Math. Soc. \textbf{139} (2011), 2105--2112.

\end{thebibliography}

\providecommand{\bysame}{\leavevmode\hbox to3em{\hrulefill}\thinspace}
\providecommand{\MR}{\relax\ifhmode\unskip\space\fi MR }
\providecommand{\MRhref}[2]{%
  \href{http://www.ams.org/mathscinet-getitem?mr=#1}{#2}
}
\providecommand{\href}[2]{#2}

\end{document}